\title{Stability conditions, $\tau$-tilting Theory and Maximal Green Sequences}
\author{Thomas Br\"ustle, David Smith and Hipolito Treffinger}
\theoremstyle{plain} 
\newtheorem{theorem}{Theorem}[section]
\newtheorem{prop}[theorem]{Proposition}
\newtheorem{lem}[theorem]{Lemma}
\newtheorem{cor}[theorem]{Corollary}
\theoremstyle{remark}
\newtheorem{rmk}[theorem]{Remark}
\newtheorem{ex}[theorem]{Example}
\theoremstyle{definition}
\newtheorem{defi}[theorem]{Definition}
\newtheorem{conj}[theorem]{Conjecture}
\newcommand{\rep}[1]{%
  {%
    \tiny%
    \begin{matrix}%
      #1%
    \end{matrix}%
  }%
}
\def\Hom{\mbox{Hom}}
\def\coker{\mbox{coker}}
\def\im{\mbox{im}}
\def\Fac{\mbox{Fac}\,}
\def\Sub{\mbox{Sub}\,}
\def\add{\mbox{add}\,}
\def\mod{\mbox{mod}\,}
\newcommand{\A}{\mathcal{A}}
\newcommand{\X}{\mathcal{X}}
\newcommand{\F}{\mathcal{F}}
\renewcommand{\P}{\mathcal{P}}
\newcommand{\T}{\mathcal{T}}
\newcommand{\ra}{\rightarrow}
\newcommand{\D}{\mathfrak{D}}
\newcommand{\Ch}{\mathfrak{C}}
\newcommand{\Co}{\mathcal{C}}
\begin{document}

\maketitle

\begin{center}
Dedicated to Idun Reiten on the occasion of her 75th birthday.
\end{center}

\abstract{Extending the notion of maximal green sequences to an abelian category, we characterize the stability functions, as defined by Rudakov, that induce a maximal green sequence in an abelian length category. Furthermore, we use $\tau$-tilting theory to give a description of the wall and chamber structure of any finite dimensional algebra. Finally we introduce the notion of green paths in the wall and chamber structure of an algebra and show that green paths serve as geometrical generalization of maximal green sequences in this context. }

%%%%%%%%%%%%%%%%%%%%%%%%%%%%%%%%%%%%%%
%%%%%%%%%%%%%%%%%%%%%%%%%%%%%%%%%%%%%%
\section{Introduction}
%%%%%%%%%%%%%%%%%%%%%%%%%%%%%%%%%%%%%%
%%%%%%%%%%%%%%%%%%%%%%%%%%%%%%%%%%%%%%
Stability conditions were introduced in representation theory of quivers in seminal papers by Schofield \cite{Scho} and King \cite{Ki}, and the general notion of stability was formalised in the context of abelian categories by Rudakov \cite{Ru}.
Since then, the study of rings of quiver semi-invariants by Derksen and Weyman has been expanded to the context of cluster algebras. The work of Igusa, Orr, Todorov and Weyman \cite{IOTW} shows that walls in the semi-invariant picture correspond to the $c$-vectors in cluster theory. These vectors are also studied in quantum field theory, where they are interpreted as charges of BPS particles. It turns out that maximal green sequences, that is, maximal paths in the semi-invariant picture which are oriented in positive direction, give rise to a complete sequence of charges, called spectrum of a BPS particle. 

The semi-invariant picture of quiver representations has re-appeared in mathematical physics and mirror symmetry as scattering diagrams such as in Kontsevich and Soibelman's study of wall crossing in the context of Donaldson-Thomas invariants in integrable systems and mirror symmetry. 
Bridgeland describes in \cite{B16} the  wall and chamber structure in this context.

It seems natural to relate these concepts to the recent work of Adachi, Iyama and Reiten on $\tau$-tilting pairs in representation theory. The aim of this paper is therefore to join the concept scattering diagrams and their wall-and-chamber structure as described in \cite{B16} with the combinatorial structure of the fan associated with $\tau$-tilting modules as given in \cite{DIJ}, as well as to investigate maximal green sequences, and their continuous counterparts in the stability space.
\bigskip

%%%%%%%%%%%%%%%%%%%%%%%%%%%%%%%%%%%%%%
\subsection{Content}

We first study Rudakov's notion of stability on an abelian length category $\mathcal{A}$, which is given by a function  $\phi$ on Obj($\mathcal{A}$) that assigns to each object $X$  a phase $\phi(X)$ in a totally ordered set, satisfying the so-called see-saw condition on short exact sequences, see
definition \ref{seesaw}.
 An object $0\neq M$ in $\mathcal{A}$ is said to be \textit{$\phi$-stable} (or \textit{$\phi$-semi-stable}) if every nontrivial subobject $L\subset M$ satisfies $\phi(L)< \phi(M)$ ( or $\phi(L)\leq \phi(M)$, respectively).
Inspired by \cite{B16}, but in the more general context of abelian categories allowing infinitely many simple objects, we then define for each phase $p$ a torsion pair $(\T_p, \F_p)$ in $\mathcal{A}$ as follows (see proposition \ref{torsionpair}):

$$\T_p=\{M\in\A \ : \phi(N)\geq p \text{ for every quotient $N$ of $M$}\}$$
$$\F_p=\{M\in\A \ : \ \phi(N)< p \text{ for every subobject $N$ of $M$}\}$$

Since $\T_p\supseteq\T_q$ when $p\leq q$,  a stability function $\phi$ induces a chain of torsion classes in $\A$.  
We define a maximal green sequence in $\A$ to be a not refinable finite increasing chain of torsion classes starting with the zero class and ending in $\A$.
Following Engenhorst \cite{E14}, we call a stability function $\phi$ on $ \A$ \textit{discrete} if it admits (up to isomorphism) at most one stable object for every phase at $p$. 

The main result in section 3  characterizes which stability functions induce maximal green sequences in $\A$, see Theorem \ref{maximalgreensequences}:

\begin{theorem}
Let $\phi:\A\ra \P$ be a stability function that admits no maximal phase. Then $\phi$ induces a maximal green sequence of torsion classes in $\A$ if and only if $\phi$ is a discrete stability function inducing only finitely many different torsion classes $\T_p$.
\end{theorem}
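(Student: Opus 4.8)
The plan is to translate the induced chain of torsion classes into the combinatorics of Harder--Narasimhan filtrations and $\phi$-stable objects, and then to analyse that chain inside the lattice of torsion classes. As a preliminary dictionary I would record that, in the length category $\A$, every object $M$ has an HN filtration with $\phi$-semistable factors of strictly decreasing phases, so that $M\in\T_p$ precisely when the least of these phases, $\phi_{\min}(M)$, satisfies $\phi_{\min}(M)\ge p$ (and $M\in\F_p$ precisely when the greatest, $\phi_{\max}(M)$, satisfies $\phi_{\max}(M)<p$); moreover the phase of any $\phi$-semistable object is the phase of a $\phi$-stable object, because the subcategory $\mathcal{W}_r$ of semistable objects of phase $r$ is a length abelian category whose simple objects are exactly the $\phi$-stable objects of phase $r$. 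It follows that for $p<q$ one has $\T_p=\T_q$ if and only if no phase in $[p,q)$ carries a $\phi$-stable object; so ``$\phi$ induces finitely many torsion classes'' is equivalent to ``only finitely many phases $r_1<\dots<r_m$ carry a $\phi$-stable object'', and in that case the induced chain is exactly $\A=\T_{r_1}\supsetneq\T_{r_2}\supsetneq\dots\supsetneq\T_{r_m}\supsetneq\{0\}$, where $\T_{r_m}=\mathcal{W}_{r_m}\neq\{0\}$ and the bottom equality $\T_p=\{0\}$ for $p>r_m$ uses the hypothesis that $\phi$ admits no maximal phase to supply such a $p$. In particular a discrete $\phi$ with finitely many torsion classes already induces a finite chain running from $\{0\}$ to $\A$, so in both implications the real point is non-refinability.

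For the forward implication, suppose towards a contradiction that $\phi$ is not discrete, so there are non-isomorphic $\phi$-stable objects $S_1\not\cong S_2$ of a common phase $p$. Then $p$ carries a stable object, so $\T_p$ appears in the induced chain as a class $\mathcal{U}'$ that is not the bottom one; let $\mathcal{U}=\T_q$ (with $q>p$) be its immediate predecessor, so that $S_1,S_2\in\mathcal{U}'\setminus\mathcal{U}$. Let $\mathcal{V}$ be the smallest torsion class containing $\mathcal{U}$ and $S_1$. Since $\mathcal{U}'$ is a torsion class containing $\mathcal{U}$ and $S_1$ we have $\mathcal{U}\subsetneq\mathcal{V}\subseteq\mathcal{U}'$. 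To see the second inclusion is also strict I would check $\Hom(\mathcal{V},S_2)=0$: a nonzero map between $\phi$-stable objects of the same phase is an isomorphism, so $\Hom(S_1,S_2)=0$; a nonzero map $X\to S_2$ with $X\in\mathcal{U}$ would have image a quotient of $X$, hence of phase $\ge q>p$, inside the semistable object $S_2$ of phase $p$, which is impossible, so $\Hom(\mathcal{U},S_2)=0$; and since $\mathcal{V}$ is obtained from $\mathcal{U}\cup\{S_1\}$ by closing under extensions and then quotients, left-exactness of $\Hom(-,S_2)$ together with the vanishing of $\Hom(-,S_2)$ on all quotients of objects it kills forces $\Hom(\mathcal{V},S_2)=0$. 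As $\Hom(S_2,S_2)\neq0$ this gives $S_2\notin\mathcal{V}$, so $\mathcal{U}\subsetneq\mathcal{V}\subsetneq\mathcal{U}'$ contradicts non-refinability. Hence $\phi$ is discrete, and it induces finitely many torsion classes because a maximal green sequence is finite.

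For the converse, assume $\phi$ is discrete and induces finitely many torsion classes, so by the dictionary the induced chain is $\A=\T_{r_1}\supsetneq\dots\supsetneq\T_{r_m}\supsetneq\{0\}$; set $\T_{r_{m+1}}=\{0\}$ and let $S_j$ be the \emph{unique} $\phi$-stable object of phase $r_j$. It suffices to show that no torsion class lies strictly between $\T_{r_{j+1}}$ and $\T_{r_j}$. First, $\T_{r_j}$ is the smallest torsion class containing $\T_{r_{j+1}}$ and $S_j$: this smallest class is contained in $\T_{r_j}$ since $\T_{r_{j+1}}\subseteq\T_{r_j}$ and $S_j\in\T_{r_j}$, and conversely any $M\in\T_{r_j}$ has all HN-factors of phase $\ge r_j$, those of phase $r_j$ lying in $\mathcal{W}_{r_j}=\mathrm{Filt}(S_j)$ and those of phase $>r_j$ lying in some $\mathcal{W}_{r_\ell}\subseteq\T_{r_\ell}\subseteq\T_{r_{j+1}}$, so $M$ is filtered by objects of $\T_{r_{j+1}}\cup\{S_j\}$. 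Now if $\T_{r_{j+1}}\subsetneq\mathcal{V}\subsetneq\T_{r_j}$ for a torsion class $\mathcal{V}$, choose $M\in\mathcal{V}\setminus\T_{r_{j+1}}$; then $r_j\le\phi_{\min}(M)<r_{j+1}$, and as no stable phase lies strictly between $r_j$ and $r_{j+1}$ we get $\phi_{\min}(M)=r_j$, so the last HN-factor of $M$ is a quotient of $M$ belonging to $\mathcal{W}_{r_j}=\mathrm{Filt}(S_j)$; hence $S_j$ is a quotient of $M$ and $S_j\in\mathcal{V}$, and combined with $\T_{r_{j+1}}\subseteq\mathcal{V}$ this forces $\mathcal{V}\supseteq\T_{r_j}$, a contradiction. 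Thus the chain is non-refinable and $\phi$ induces a maximal green sequence.

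I expect the crux --- and the one place where discreteness is genuinely used --- to be this last point: adjoining the single stable object $S_j$ to $\T_{r_{j+1}}$ produces a covering pair in the lattice of torsion classes, leaving no room in between. When $\phi$ fails to be discrete the subcategory $\mathcal{W}_{r_j}$ has several simple objects and the corresponding step can be split, which is exactly the obstruction the forward direction turns into a contradiction. The ambient facts used throughout --- existence of HN filtrations in the length category $\A$, the description of $\mathcal{W}_r$ as an abelian length category with the $\phi$-stable objects of phase $r$ as its simple objects, and the identity $\T_p=\{M:\phi_{\min}(M)\ge p\}$ --- are standard in Rudakov-style stability theory and I would take them from the earlier sections.
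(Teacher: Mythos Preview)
Your proof is correct and rests on the same underlying ideas as the paper's: Harder--Narasimhan filtrations, the identification of $\phi$-stable objects as the simples of the abelian subcategory $\mathcal{W}_r$, and the construction (or exclusion) of intermediate torsion classes via a second stable object. The organisation differs, however. The paper factors the argument through Proposition~\ref{mutation}, which characterises exactly when two consecutive induced torsion classes $\T_p\subsetneq\T_q$ form a covering pair; Theorem~\ref{maximalgreensequences} then follows in a few lines by invoking that proposition and Lemma~\ref{supremum}. You instead establish the dictionary ``finitely many torsion classes $\Leftrightarrow$ finitely many stable phases $r_1<\cdots<r_m$'' up front and then argue each covering pair directly. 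This buys you some economy in the forward direction: rather than explicitly building $\text{Filt}(\A_{\ge p}\cup\{N\})$ and verifying it is closed under quotients (as the paper does in Proposition~\ref{mutation}), you take the abstract smallest torsion class $\mathcal{V}$ containing $\mathcal{U}\cup\{S_1\}$ and exclude $S_2$ by observing that ${}^\perp S_2$ is itself a torsion class containing $\mathcal{U}\cup\{S_1\}$. Two small points of phrasing are worth tightening: the description of $\mathcal{V}$ as ``closing under extensions and then quotients'' should really be the observation that ${}^\perp S_2$ is closed under both, hence contains $\mathcal{V}$; and in the converse your convention $\T_{r_{m+1}}=\{0\}$ handles the boundary step $j=m$ correctly, but the inequality $\phi_{\min}(M)<r_{j+1}$ should be replaced there by the direct remark that $\phi_{\min}(M)$ is a stable phase $\ge r_m$, hence equals $r_m$.
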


In section 4 we recall  the notion of stability studied by King  \cite{Ki}:
Let $\A$ be an abelian category and $\theta:K_0(\A)\ra \mathbb{R}$ an additive function on the Grothendieck group of $\A$. Then an object $M\in\A$ is called  \textit{$\theta$-semi-stable} if $\theta(M)=0$ and $\theta(L) \leq 0$  for every  subobject $L$ of $M$.

Suppose now that $\A$ has $n$ simple objects, hence $K_0(\A)$ is isomorphic to $\mathbb{Z}^n$. 
We denote by $(\mathbb{R}^n)^*$ the dual vector space of $\mathbb{R}^n$. Then, for an object $M$  of $\A$, we define the \textit{stability space of $M$} to be 
$$\D(M)=\{\theta\in(\mathbb{R}^n)^* : M \text{ is $\theta$-semi-stable}\}.$$

The stability space $\D(M)$ of $M$ is contained in the hyperplane defined by the linear form $\theta$, but it could have smaller dimension. We say that $\D(M)$ is a \textit{wall} when $\D(M)$ has codimension one. 
Outside the walls, there are only linear functions $\theta$ having no $\theta$-semi-stable modules other that the zero object.
Removing the closure of all walls we obtain a space
$$\mathfrak{R}=(\mathbb{R}^t)^*\setminus\overline{\bigcup\limits_{M\in\A}\mathcal{D}(M)}.$$
A connected component $\mathfrak{C}$ of dimension $n$ of $\mathfrak{R}$ is called a \textit{chamber}.  

From now on, we consider the case when $\A$ is the category of finitely generated modules $\mod A$ over a finite dimensional algebra $A$ over an algebraically closed field $k$.
In this context, we study the notion of $\tau$-rigid modules and $\tau$-tilting pairs from Adachi, Iyama and Reiten \cite{AIR}.
%\begin{enumerate}[label=(\alph*)] \item Let $M$ be an $A$-module. Then $M$ is \textit{$\tau$-rigid} if $\Hom_A(M, \tau M)=0$; \item A pair $(M,P) \in(\mod A)\times(\proj \;A)$ is \textit{$\tau$-rigid} if $M$ is a $\tau$-rigid module and $\Hom_A(P,M)=0$;\item A $\tau$-rigid pair $(M,P)$ is \textit{$\tau$-tilting} if $|M|+|P|=|A|$;\end{enumerate}
% skip the definition in the introduction
It has been shown by King that the category of $\theta$-semi-stable modules is abelian. Using $\tau$-tilting theory we are able to give a more precise statement (see Theorem \ref{stablemodcat}): 

\begin{theorem}
Let $(M,P)$ be a $\tau$-rigid pair. If the linear map $\theta$ is induced by the $g$-vectors of $(M,P)$, then there exists an algebra $C$ such that the category of $\theta$-semi-stable modules is equivalent to $\emph{mod} C$.
\end{theorem}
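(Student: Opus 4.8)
The plan is to identify the category of $\theta$-semistable modules with the wide subcategory that $(M,P)$ cuts out in the lattice of torsion classes, and then to recognise that subcategory as a module category by means of $\tau$-tilting reduction. Throughout, I identify $(\mathbb{R}^n)^*$ with $K_0(\proj A)_{\mathbb R}$ via the pairing $\langle[P],[X]\rangle=\dim_k\Hom_A(P,X)$, so that the $g$-vectors of $(M,P)$ become linear forms on $K_0(\mod A)$; the hypothesis that $\theta$ is induced by these $g$-vectors is read as: $\theta$ lies in the relative interior of the cone $C(M,P)$ spanned by the $g$-vectors of the indecomposable summands of $M$ together with the negatives of the $g$-vectors of the indecomposable summands of $P$. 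By \cite{AIR,DIJ}, $(M,P)$ admits a Bongartz completion $(T^{+},Q^{+})$ and a co-Bongartz completion $(T^{-},Q^{-})$ to support $\tau$-tilting pairs, with torsion classes $\mathcal T^{-}:=\Fac T^{-}\subseteq\mathcal T^{+}:=\Fac T^{+}$, and $C(M,P)$ is exactly the common face along which the chambers of these two pairs meet; write $\mathcal F^{\pm}$ for the associated torsion-free classes.

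The first step, which I expect to be the principal one, is to show that the $\theta$-semistable subcategory equals $\mathcal T^{+}\cap\mathcal F^{-}$. Associating with $\theta$ the stability function $\phi_\theta$ given by the slope $\phi_\theta(X)=\theta(X)/\dim_k X$, one checks directly (using the see-saw property) that $X$ is $\theta$-semistable precisely when $X$ lies in the torsion class $\mathcal T_0$ of $\phi_\theta$ from Proposition \ref{torsionpair} \emph{and} every submodule of $X$ has non-positive slope, i.e.\ $X$ belongs to the torsion-free class $\mathcal F_{0^{+}}$ of the torsion class $\mathcal T_{0^{+}}=\bigcap_{q>0}\mathcal T_q$; here the finiteness of the set of distinct torsion classes $\mathcal T_q$, available because $A$ is finite dimensional, is what makes $\mathcal T_{0^{+}}$ a genuine torsion class, reached already at a fixed small $q>0$. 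It then remains to match these two torsion classes with the completions of $(M,P)$: when $\theta$ runs over the relative interior of $C(M,P)$, the dictionary between the $\tau$-tilting fan and functorially finite torsion classes forces $\mathcal T_0=\mathcal T^{+}$ and $\mathcal T_{0^{+}}=\mathcal T^{-}$, whence the $\theta$-semistable subcategory is $\mathcal T^{+}\cap\mathcal F^{-}$. The delicate point is this last matching: one must verify that a generic form on the face $C(M,P)$ takes strict signs on every nonzero object of $\mathcal T^{-}$ and of $\mathcal F^{+}$, the only forced vanishings being along the bricks that label the walls through $C(M,P)$; this is where the simpliciality of the $\tau$-tilting cones and the brick labelling of their walls come in.

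Finally, $\mathcal T^{+}\cap\mathcal F^{-}$ is precisely the wide subcategory produced by Jasso's $\tau$-tilting reduction at $(M,P)$, and that reduction yields a finite dimensional algebra $C$ — the endomorphism algebra of a projective generator of $\mathcal T^{+}\cap\mathcal F^{-}$, assembled from the indecomposable summands of $T^{+}$ and $T^{-}$ that are not summands of $M$ — together with an equivalence $\mathcal T^{+}\cap\mathcal F^{-}\simeq\mod C$. Composing with the identification of the previous paragraph gives the required equivalence between the category of $\theta$-semistable modules and $\mod C$. (If one prefers not to invoke the reduction theorem as a black box, one argues directly that $\mathcal T^{+}\cap\mathcal F^{-}$ is a wide, functorially finite, length subcategory of $\mod A$ with finitely many simple objects — the $\theta$-stable modules, by King — and enough projectives, hence equivalent to $\mod C$ for $C$ its basic endomorphism algebra; the work then shifts to exhibiting these projectives, once more out of the two completions.)
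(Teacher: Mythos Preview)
Your endpoint agrees with the paper's --- both invoke Jasso's reduction, which identifies $M^\perp \cap {^\perp(\tau M)} \cap P^\perp$ with $\mod C$ --- but the routes to that identification diverge. The paper bypasses the Rudakov machinery and the two completions entirely: using the Auslander--Reiten formula $\langle g^M - g^P, [N]\rangle = \hom(M,N) - \hom(N,\tau M) - \hom(P,N)$, it argues directly (Lemmas~\ref{lemtrace} and~\ref{catsemi-stables}) that the $\theta_{\alpha(M,P)}$-semistable modules are exactly those in $M^\perp \cap {^\perp(\tau M)} \cap P^\perp$. The argument is short: one direction is immediate from the formula; for the other, the canonical sequence $0\to tN\to N\to N/tN\to 0$ relative to $(\Fac M, M^\perp)$ satisfies $\theta(tN)\ge 0$ by the formula (every term but $\hom(M,tN)$ vanishes), while semistability gives $\theta(tN)\le 0$, whence $tN=0$, $N\in M^\perp$, and then $\theta(N)=0$ forces $N\in {^\perp(\tau M)}\cap P^\perp$.

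Your approach replaces this computation by matching the Rudakov torsion classes $\mathcal T_0$, $\mathcal T_{0^{+}}$ with the Bongartz and co-Bongartz completions. Two remarks. First, the claim that ``the set of distinct torsion classes $\mathcal T_q$ is finite because $A$ is finite dimensional'' is false: already for the Kronecker quiver a generic slope function produces infinitely many. (It is also unnecessary: $\bigcap_{q>0}\mathcal F_q$ is a torsion-free class regardless, and that already suffices to describe $\mathcal A_0$.) Second, the step you yourself flag as delicate --- that a generic $\theta$ on the face $C(M,P)$ is strictly positive on $\mathcal T^{-}=\Fac M$ and strictly negative on $\mathcal F^{+}$ --- is \emph{precisely} what the paper's two lemmas establish via the Auslander--Reiten formula; your reformulation does not circumvent that computation but only postpones it. Once you carry out the matching, you will have reproved Lemmas~\ref{lemtrace} and~\ref{catsemi-stables} with extra scaffolding. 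The paper's route buys brevity and avoids any appeal to completions or to the geometry of the $\tau$-tilting fan; yours places the result inside the wall-and-chamber picture, which is conceptually pleasant but not shorter.
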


We further show in section 4 how the $\tau$-tilting fan introduced by Demonet, Iyama and Jasso in \cite{DIJ} can be embedded into King's stability manifold:
Each $\tau$-tilting pair $(M,P)$ yields a chamber $\Ch_{(M,P)}$, and one can give a complete description of the walls bordering this chamber $\Ch_{(M,P)}$:

\begin{theorem}\label{introwalls}
Let $A$ be a finite-dimensional algebra over an algebraically closed field. Then there is an injective function $\Ch$ mapping the $\tau$-tilting pair $(M,P)$ onto a chamber $\mathfrak{C}_{(M,P)}$ of the wall and chamber structure of $A$. Furthermore, if $A$ is $\tau$-tilting finite then $\Ch$ is also surjective.
\end{theorem}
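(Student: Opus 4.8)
The plan is to construct the map $\Ch$ explicitly, show it is well-defined (lands in genuine $n$-dimensional chambers), show it is injective, and finally show surjectivity in the $\tau$-tilting finite case. Given a $\tau$-tilting pair $(M,P)$ with $M=M_1\oplus\cdots\oplus M_r$ and $P=P_{r+1}\oplus\cdots\oplus P_n$ indecomposable, let $g_1,\dots,g_n\in K_0(\mathrm{proj}\,A)\cong\mathbb{Z}^n$ be the associated $g$-vectors; by the theory of Adachi--Iyama--Reiten and Demonet--Iyama--Jasso these form a basis of $\mathbb{R}^n$, and we define $\mathfrak{C}_{(M,P)}$ to be (the interior of) the simplicial cone in $(\mathbb{R}^n)^*$ consisting of those $\theta$ with $\langle\theta,g_i\rangle>0$ for the indecomposable summands $M_i$ of $M$ and $\langle\theta,g_j\rangle<0$ for the summands $P_j$ of $P$. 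The first step is to verify that this open cone lies in $\mathfrak{R}$, i.e. that it meets no wall: for $\theta$ in this cone one uses the torsion pair attached to $(M,P)$ together with the characterization of $\theta$-semistable modules (any $\theta$-semistable $N$ must have a composition series whose factors are $\theta$-stable, and a $\theta$-stable module is a brick) to argue that the only $\theta$-semistable module is $0$, so no $\mathcal{D}(N)$ passes through the cone; hence the cone is contained in a single chamber, and since it is open of dimension $n$ it equals that chamber.

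Next, injectivity: distinct $\tau$-tilting pairs give distinct chambers because the $g$-vector fan of Demonet--Iyama--Jasso is a genuine (non-overlapping) fan — two different $\tau$-tilting pairs have cones meeting only along boundary faces, so their interiors are disjoint, and disjoint open $n$-dimensional sets sitting inside $\mathfrak{R}$ cannot be the same connected component. For the walls bordering $\mathfrak{C}_{(M,P)}$ one passes to the codimension-one faces of the simplicial cone: the face where $\langle\theta,g_i\rangle=0$ (and the other sign conditions stay strict) should be identified, via mutation of $(M,P)$ at the $i$-th summand, with (an open piece of) the stability space $\mathcal{D}(X)$ of the brick $X$ labelling that mutation — here one invokes the previous theorem, that on such a face the $\theta$-semistable category is $\mathrm{mod}\,C$ for some algebra $C$, and picks out the unique (up to iso) simple-top object, which is the brick filtering the wall. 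This gives the promised complete description of the bordering walls.

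Finally, surjectivity when $A$ is $\tau$-tilting finite: here the cones $\mathfrak{C}_{(M,P)}$ are finite in number and I claim their closures cover $(\mathbb{R}^n)^*$. The key input is that in the $\tau$-tilting finite case the $g$-vector fan is \emph{complete} (this is exactly the Demonet--Iyama--Jasso result that $\tau$-tilting finiteness is equivalent to the $g$-fan being finite and complete). Given any chamber $\mathfrak{C}$ of $\mathfrak{R}$, pick $\theta\in\mathfrak{C}$; completeness puts $\theta$ in some maximal cone $\overline{\mathfrak{C}_{(M,P)}}$, and since $\theta$ avoids all walls it must lie in the interior, so $\mathfrak{C}=\mathfrak{C}_{(M,P)}$. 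The main obstacle I anticipate is the first step — proving that the open $g$-vector cone genuinely contains no wall, i.e. that $\theta$-semistability forces the zero object there. This requires carefully combining the torsion-class description of $(\mathcal{T}_p,\mathcal{F}_p)$-type pairs with King's semistability and the fact that a stable module over the relevant sign pattern is a brick whose $g$-vector would have to be a positive combination violating one of the strict inequalities; making this airtight (rather than just plausible) is where the real work lies, and it is also what powers the wall description in the second step.
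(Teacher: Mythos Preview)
Your definition of $\mathfrak{C}_{(M,P)}$ is not the one the paper uses, and it is in fact the wrong cone: you describe the \emph{dual} cone $\{\theta:\langle\theta,g^{M_i}\rangle>0,\ \langle\theta,-g^{P_j}\rangle>0\}$, whereas the chamber in the paper is the positive cone \emph{spanned} by the $g$-vectors, namely $\{\sum_i\alpha_i g^{M_i}-\sum_j\alpha_j g^{P_j}:\alpha_i>0\}$, viewed in $(\mathbb{R}^n)^*$ via the inner product. These coincide only when the $g$-vectors are orthogonal; in general your cone is strictly larger and does contain walls --- already for $(P(1)\oplus S(1),0)$ over $\mathbb{A}_2$ your cone contains the wall $\D(S(2))$. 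Your later references to the $g$-vector fan and its maximal cones suggest you have the correct object in mind, so this may be a slip, but it must be fixed before anything else works.

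More seriously, your sketch of the key step --- that the interior of the cone meets no wall --- does not go through. Knowing that a $\theta$-stable module is a brick does not rule out its existence, and arbitrary modules do not carry $g$-vectors, so the phrase ``a brick whose $g$-vector would have to be a positive combination violating one of the strict inequalities'' has no content. The paper's argument is concrete and different: the Auslander--Reiten formula $\langle g^M,[N]\rangle=\dim\Hom(M,N)-\dim\Hom(N,\tau M)$ is used to prove (Lemma~\ref{catsemi-stables}) that for any $\theta$ in the positive cone of $(M,P)$ the $\theta$-semistable modules are exactly those in ${}^{\perp}\tau M\cap M^\perp\cap P^\perp$, and Jasso's reduction then identifies this subcategory with $\mod C$ for an algebra $C$ of rank $n-|M|-|P|$. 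When $(M,P)$ is $\tau$-tilting this rank is zero and the semistable category is $\{0\}$; when it is almost $\tau$-tilting the rank is one, which simultaneously produces the unique brick labelling the adjacent wall. Once the cone is correctly defined and shown to be a chamber, your approaches to injectivity (via disjointness of cones in the DIJ fan, rather than the paper's route through $\T_{\Ch_{(M,P)}}=\Fac M$ together with \cite[Theorem~2.7]{AIR}) and to surjectivity (via completeness of the fan in the $\tau$-tilting finite case) are legitimate alternatives.
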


We also define in section 4 a function $\T$ which assigns to each chamber $\Ch$ a torsion class $\T_{\Ch}$, and  show that $\T_{\Ch_{(M,P)}}=\Fac M$.

Finally, we generalize in section 5 the notion of maximal green sequences to green paths. This idea is related to work in \cite{Mmutnoninv}, \cite{GHKK} and \cite{B16}:
Consider a continuous function $\gamma:[0,1]\rightarrow \mathbb{R}^n$  such that $\gamma(0)=(1,\dots, 1)$ and $\gamma(1)=(-1,\dots,-1)$. If we fix an $A$-module $M$, $\gamma(t)$ induces a continuous function $\rho_M:[0,1]\rightarrow \mathbb{R}$ defined as $\rho_M(t)=\theta_{\gamma(t)}([M])$, where $\theta_{y}$ denotes the linear map $x\mapsto  \langle x, y\rangle$. Note that $\rho_M(0)>0$ and $\rho_M(1)<0$. Therefore, for every module $M$ there is at least one $t\in(0,1)$ such that $\theta_{\gamma(t)}([M])=0$. This easy remark leads us to the definition of green paths as follows:

\begin{defi}
A \textit{green path in $\mathbb{R}^n$} is a continuous function $\gamma: [0,1]\ra\mathbb{R}^n$ such that $\gamma(0)=(1,\dots, 1)$, $\gamma(1)=(-1,\dots,-1)$ and such that for every $M\in$ mod $A$ there is exactly one $t_M\in[0,1]$ such that $\theta_{\gamma(t_M)}([M])=0$.
\end{defi}

Note that we allow green paths to pass through the intersection of walls. It is easy to see that every green path $\gamma:[0,1]\to \mathbb{R}^n$ induces a stability function $\phi_{\gamma}:\rm{mod} A\to [0,1]$ in the sense of Rudakov defined by $\phi_{\gamma}(M)=t_M$, where $t_M$ is the unique element in $[0,1]$ such that $\theta_{\gamma(t_M)}(M)=0$. Moreover $M$ is $\phi_{\gamma}$-semi-stable if and only if $M$ is $\theta_{\gamma(t_M)}$-semi-stable. Equipped with this notion, we are finally prepared to show that green paths generalize maximal green sequences in module categories:

\begin{theorem}
Let $\gamma$ be a green path and suppose that there are only finitely many $\phi_{\gamma}$-stable modules $M_1, \dots, M_n$. Then $\gamma$ induces a chain of torsion classes which forms a maximal green sequence if and only if $t_{M_i}\neq t_{M_j}$ for all $i\neq j$. Moreover, every maximal green sequence is obtained in this way.
\end{theorem}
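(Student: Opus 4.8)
The plan is to derive both parts of the statement from Theorem~\ref{maximalgreensequences} together with the description of the wall-and-chamber structure of $A$ obtained in Section~4. For the equivalence, recall that $\phi_\gamma$ is a stability function on $\mod A$ and that a module $M$ is $\phi_\gamma$-(semi)stable precisely when it is $\theta_{\gamma(t_M)}$-(semi)stable. One first checks that $\phi_\gamma$ admits no maximal phase: for every $M\neq 0$ one has $\rho_M(0)=\langle[M],(1,\dots,1)\rangle>0$ and $\rho_M(1)=\langle[M],(-1,\dots,-1)\rangle<0$, so $t_M\in(0,1)$, and we may regard $\phi_\gamma$ as valued in the totally ordered set $(0,1)$, which has no greatest element. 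It then remains to match the two hypotheses of Theorem~\ref{maximalgreensequences} with the condition on the $t_{M_i}$. Finiteness of the family $\{\T_p\}$ is automatic here: by the Harder--Narasimhan formalism for the stability function $\phi_\gamma$, the class $\T_p$ changes only as $p$ passes the phase of some $\phi_\gamma$-semistable object, and since every nonzero $\phi_\gamma$-semistable object has a $\phi_\gamma$-stable subobject of the same phase, these phases all lie in the finite set $\{t_{M_1},\dots,t_{M_n}\}$; hence $p\mapsto\T_p$ assumes at most $n+1$ values. Furthermore, since $M_1,\dots,M_n$ are \emph{all} the $\phi_\gamma$-stable modules, $\phi_\gamma$ is discrete -- at most one stable object for each phase -- exactly when $t_{M_i}\neq t_{M_j}$ for $i\neq j$. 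With these identifications, Theorem~\ref{maximalgreensequences} gives the equivalence.

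For the last assertion, let $\{0\}=\T_0\subsetneq\T_1\subsetneq\dots\subsetneq\T_m=\mod A$ be a maximal green sequence. The existence of this finite maximal chain of torsion classes forces $A$ to be $\tau$-tilting finite, so by Section~4 each $\T_i$ is functorially finite, say $\T_i=\Fac M^{(i)}$ for a $\tau$-tilting pair $(M^{(i)},P^{(i)})$ with chamber $\Ch_i:=\mathfrak{C}_{(M^{(i)},P^{(i)})}$; in particular $(1,\dots,1)\in\Ch_m$ and $(-1,\dots,-1)\in\Ch_0$. The covering relation $\T_{i-1}\subsetneq\T_i$ corresponds, via the description of the walls bordering a chamber in Theorem~\ref{introwalls}, to $\Ch_{i-1}$ and $\Ch_i$ being adjacent along a codimension-one wall contained in $\D(B_i)$, where $B_i\in\T_i\setminus\T_{i-1}$ is the brick labelling the Hasse arrow $\T_{i-1}\to\T_i$; note that if $B_i\cong B_j$ with $i<j$ then $B_i\in\T_i\subseteq\T_{j-1}$, contradicting $B_j\notin\T_{j-1}$, so $B_1,\dots,B_m$ are pairwise non-isomorphic. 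I would then construct $\gamma$ as a piecewise-linear path from $(1,\dots,1)$ to $(-1,\dots,-1)$ all of whose coordinate functions are strictly decreasing, arranged so that $\gamma$ meets, in this order, the relative interiors of $\Ch_m$, of the wall in $\D(B_m)$ separating $\Ch_m$ from $\Ch_{m-1}$, of $\Ch_{m-1}$, \dots, of the wall in $\D(B_1)$, and of $\Ch_0$, transverse to every wall and meeting no other wall. Strict monotonicity of the coordinates forces $\rho_M(t)=\langle[M],\gamma(t)\rangle$ to be strictly decreasing for every $M\neq 0$ -- its coefficients are the nonnegative, not all zero, entries of $[M]$ -- hence to have a unique zero, so $\gamma$ is a green path. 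Its only $\phi_\gamma$-stable modules are then $B_1,\dots,B_m$, occurring at the pairwise distinct times $t_{B_m}<\dots<t_{B_1}$, and comparing definitions one finds $\T_p^{\phi_\gamma}=\T_{\Ch_i}=\Fac M^{(i)}=\T_i$ for every phase $p$ lying in the interval associated with $\Ch_i$; thus the chain of torsion classes induced by $\gamma$ is exactly $\T_0\subsetneq\dots\subsetneq\T_m$.

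The step I expect to be the main obstacle is the very existence of such a path $\gamma$: a piecewise-linear path from $(1,\dots,1)$ to $(-1,\dots,-1)$ with strictly decreasing coordinates that threads the prescribed sequence of chambers and crosses no unwanted wall. This is precisely where it is used that the sequence is \emph{green}: the monotone character of a maximal green sequence (a sequence of \emph{green} mutations) is what makes the associated walk through the $g$-vector fan compatible with coordinate-wise monotonicity of $\gamma$. Already in rank two one sees that an arbitrary coordinate-monotone path may slip into a cell one does not want, so the relative speeds of the coordinate functions must be chosen carefully; proving that a suitable choice always exists is the crux. Granting this, the remaining verifications above are routine consequences of the identity $\T_{\mathfrak{C}_{(M,P)}}=\Fac M$ and of the transversality of $\gamma$ to the walls.
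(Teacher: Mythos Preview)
Your treatment of the equivalence is correct and essentially matches the paper's argument (Corollary~\ref{maximal-paths}): finitely many stable modules give finitely many torsion classes via the Harder--Narasimhan formalism, discreteness is equivalent to the $t_{M_i}$ being pairwise distinct, and the hypothesis on the maximal phase holds since $t_M\in(0,1)$ for every nonzero $M$.

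For the ``moreover'' part there are two genuine problems. First, the claim that the existence of a maximal green sequence forces $A$ to be $\tau$-tilting finite is false: the path algebra of the Kronecker quiver, or of any non-Dynkin acyclic quiver, is $\tau$-tilting infinite yet admits maximal green sequences. What you actually need --- that each $\T_i$ is functorially finite and equals $\Fac M^{(i)}$ for a $\tau$-tilting pair obtained from the previous one by mutation --- is true, but the argument is inductive: starting from $\T_0=\Fac 0$, one uses that any torsion class covering a functorially finite one is again functorially finite and arises by $\tau$-tilting mutation. This is the content of Corollary~\ref{SVMmod}, which relies on \cite[Theorem~3.1]{DIJ} rather than on global $\tau$-tilting finiteness.

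Second, the step you flag as the crux --- the existence of a coordinate-wise strictly decreasing path threading the prescribed chambers --- is not left open in the paper; it is settled by Lemma~\ref{decrisingfunction}. That lemma shows that for $\tau$-tilting pairs $(M,P)$ and $(M',P')$ related by a green mutation one has $\theta_{\alpha(M,P)}([N])\leq\theta_{\alpha(M',P')}([N])$ for \emph{every} module $N$. In particular (taking $N$ simple) the barycenters $\alpha(M^{(i)},P^{(i)})$ are coordinate-wise weakly monotone along the green sequence. The paper then takes the piecewise-linear path through the points $\alpha(M^{(i)},P^{(i)})+\tfrac{(n-i)\varepsilon}{n}(-1,\dots,-1)$; the $\varepsilon$-shift upgrades weak to strict monotonicity of $\rho_N$ at the nodes, and piecewise linearity then forces each $\rho_N$ to have a unique zero. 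There is also no need to verify that the path ``meets no other wall'': being a green path is solely the condition that each $\rho_N$ has a unique zero, and the identification of the induced torsion classes with the $\Fac M^{(i)}$ follows from Lemma~\ref{torsionpath}.
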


%%%%%%%%%%%%%%%%%%%%%%%%%%%%%%%%%%%%%%
\subsection{Related work}
When reporting our results at an Oberwolfach workshop, we learned that Speyer and Thomas are working on related questions. In particular, they describe the chambers given by $\tau$-tilting pairs $(M,P)$ as we did in Theorem \ref{introwalls}; for more details we refer to  their Oberwolfach report (workshop ID: 1708).

Also, when reporting our results at the Auslander conference we were informed that Igusa obtained a geometric characterization of maximal green sequences using similar arguments in the case of hereditary and Jacobian algebras, see \cite{Igu}.

Finally, would like to point out that Yurikusa is using the $g$-vectors of $2$-term silting complexes of $D^b(A)$ to describe a bijection between left wide subcategories of $\mod A$ and the left finite semi-stable subcategories of $\mod A$, induced by a linear map $\theta$ in a similar way as we do in this article, see \cite{Yur}.
\bigskip

We refer to the textbooks \cite{ARS,AsSS,bookRalf} for background material.

%%%%%%%%%%%%%%%%%%%%%%%%%%%%%%%%%%%%%%
%%%%%%%%%%%%%%%%%%%%%%%%%%%%%%%%%%%%%%
\section{Stability Conditions}
%%%%%%%%%%%%%%%%%%%%%%%%%%%%%%%%%%%%%%
%%%%%%%%%%%%%%%%%%%%%%%%%%%%%%%%%%%%%%
The aim of this section is to study Rudakov's \cite{Ru} definition of stability on abelian categories. While \cite{Ru} uses the notion of a proset, we prefer to work with stability functions. 
%For the interested reader, we explain in \ref{appendix1} that both notions are equivalent. 
We first review this concept of stability here, and then discuss torsion classes arising from a stability function.

%%%%%%%%%%%%%%%%%%%%%%%%%%%%%%%%%%%%%%
\subsection{Stability Conditions}
Throughout this section, we consider an essentially small abelian category $\mathcal{A}$.

\begin{defi}\label{seesaw}
Let $(\mathcal{P},\leq)$ be a totally ordered set and  $\phi :  Obj(\mathcal{A}) \to \mathcal{P}$ a function on $\mathcal{A}$  which is constant on isomorphism classes. For an object $x$ of $\A$, we refer to $\phi(x)$ as the \textit{phase (or slope)} of $x$.
Following \cite[Definition 1.1]{Ru}, 
the map $\phi$ is called a \textit{stability function} if for each short exact sequence  $0 \to L \to M \to N \to 0$ of nonzero objects in $\mathcal{A}$ one has the so-called \textit{see-saw (or teeter-totter) property}, see figure 1:
$$
\begin{array}{ll}
\text{either} & \phi(L) < \phi(M) < \phi(N), \\
\text{or} & \phi(L) > \phi(M) > \phi(N),\\
\text{or} & \phi(L) = \phi(M) = \phi(N).
\end{array}
$$
\end{defi}

\begin{figure}
$$
\begin{array}{ccc}
\xymatrix{ & &\phi(N) \\
           &\phi(M)\ar@{-}[d]\ar@{-}[ru] & \\
           \phi(L)\ar@{-}[ru]& &}
&\xymatrix{ & & \\
           \phi(L)\ar@{-}[r]&\phi(M)\ar@{-}[d]\ar@{-}[r] &\phi(N) \\
           & &}
&\xymatrix{\phi(L)\ar@{-}[dr] & & \\
           &\phi(M)\ar@{-}[d]\ar@{-}[dr] & \\
           & &\phi(N)}
\end{array}
$$
\caption{The see-saw (or teeter-totter) property.}
\end{figure}
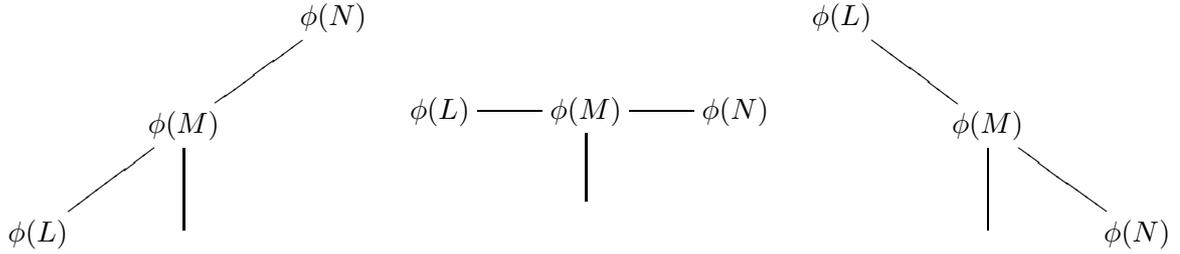

Stability functions in the physics literature are induced by a central charge $Z$. We recall this notion here from \cite{B}:
\begin{ex}
A \textit{linear stability function} on an abelian category $\mathcal{A}$ is given by a central charge, that is, a group homomorphism $Z:K(\mathcal{A})\rightarrow \mathbb{C}$ on the Grothendieck group $K(\mathcal{A})$ such that for all $0\neq M\in \mathcal{A}$ the complex number $Z(M)$ lies in the strict upper half-plane 
$$\mathbb{H}=\{r \cdot\text{exp}(i\pi\phi):r>0\text{ and } 0\leq\phi < 1\}.$$

Given such a central charge $Z:K(\mathcal{A})\rightarrow \mathbb{C}$, the phase of an object $0\neq M\in \mathcal{A}$ is defined to be 
$$\phi(M)=(1/\pi)\text{arg}Z(M)$$
\end{ex}

Clearly the phase function $\phi: Obj(\mathcal{A})\rightarrow [0,1]$ satisfies the see-saw property.

\begin{defi}\cite[Definition 1.5 and 1.6]{Ru} \label{defss}
Let  $\phi:Obj(\mathcal{A})\rightarrow \mathcal{P}$ be a stability function on $\mathcal{A}$. An object $0\neq M$ in $\mathcal{A}$ is said to be \textit{$\phi$-stable} (or \textit{$\phi$-semi-stable}) if every nontrivial subobject $L\subset M$ satisfies $\phi(L)< \phi(M)$ ( or $\phi(L)\leq \phi(M)$, respectively).
\end{defi}

When there is a fixed stability function $\phi: Obj(\A)\ra \P$, 
we  simply refer to $\phi$-stable modules as stable. 
\begin{rmk}
Note that, due to the see-saw property, one can equally define the semi-stable objects as those objects $M$ whose quotient objects $N$ satisfy  $\phi(N)\geq \phi(M)$.
\end{rmk}

The following proposition is an analogue of King's result \cite{Ki} that semi-stable modules of a fixed slope form an abelian category (that is, a wide subcategory of $\mathcal{A}$):
\begin{prop}\label{fix-slope}
Let $p\in \mathcal{P}$ be fixed. Then the full subcategory 
$$\mathcal{A}_{p}=\{0\} \cup \{M\in {\mathcal A}: M\text{ is semi-stable and }\phi(M)=p\}$$
is an abelian category.
\end{prop}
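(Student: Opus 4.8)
The plan is to verify that $\A_p$ is a full additive subcategory of $\A$ that is closed under taking kernels and cokernels of its morphisms; a standard fact then yields that $\A_p$ is abelian (with exact inclusion into $\A$). Throughout I will use both forms of semi-stability of slope $p$: a nonzero object $M$ lies in $\A_p$ if and only if $\phi(M)=p$ and $\phi(L)\leq p$ for every nonzero subobject $L\subseteq M$, equivalently, by the remark preceding this proposition, if and only if $\phi(M)=p$ and $\phi(Q)\geq p$ for every nonzero quotient $Q$ of $M$.

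\emph{Closure under finite direct sums.} For $M,N\in\A_p$ the split short exact sequence $0\ra M\ra M\oplus N\ra N\ra 0$ forces, via the see-saw property, $\phi(M\oplus N)=p$, since equal phases is the only possibility compatible with $\phi(M)=\phi(N)=p$. If $L\subseteq M\oplus N$ is a nonzero subobject, set $L_1=L\cap M\subseteq M$, so that $L/L_1$ embeds into $N$; then $\phi(L_1)\leq p$ and $\phi(L/L_1)\leq p$, and applying the see-saw property to $0\ra L_1\ra L\ra L/L_1\ra 0$ (and treating the degenerate cases $L_1=0$ and $L_1=L$ directly) gives $\phi(L)\leq\max\{\phi(L_1),\phi(L/L_1)\}\leq p$. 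Hence $M\oplus N\in\A_p$.

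\emph{Closure under kernels and cokernels.} Let $f\colon M\ra N$ be a morphism with $M,N\in\A_p$; we may assume $f\neq 0$, since otherwise $\ker f=M$ and $\coker f=N$. Write $I=\im f$, $K=\ker f$, $C=\coker f$, and assume these nonzero. Since $I$ is a nonzero quotient of the semi-stable object $M$ and a nonzero subobject of the semi-stable object $N$, we obtain $\phi(I)\geq p$ and $\phi(I)\leq p$, so $\phi(I)=p$. Applying the see-saw property to $0\ra K\ra M\ra I\ra 0$ with $\phi(M)=\phi(I)=p$ forces $\phi(K)=p$; as every nonzero subobject of $K$ is a subobject of $M$, it has phase $\leq p$, so $K\in\A_p$. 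Dually, applying it to $0\ra I\ra N\ra C\ra 0$ gives $\phi(C)=p$, and as every nonzero quotient of $C$ is a quotient of $N$, it has phase $\geq p$, so $C\in\A_p$. (Likewise $I\in\A_p$, being a subobject of $N$ of phase $p$.)

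Finally, $\A_p$ is full in $\A$, contains $0$, and is closed under finite biproducts, kernels and cokernels, all of which are therefore computed in $\A$; since a full additive subcategory of an abelian category that is closed under kernels and cokernels is itself abelian, the proposition follows. I expect no deep obstacle here; the step requiring genuine care is the bookkeeping in the third paragraph — one must move between the subobject and the quotient descriptions of semi-stability (bridged through $\im f$) and verify actual semi-stability, not merely the equality of phases, for $K$, $I$ and $C$, while disposing of the degenerate cases $f=0$, $K=0$, $C=0$ uniformly.
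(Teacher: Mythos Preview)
Your proof is correct and follows essentially the same approach as the paper: both reduce to showing closure under kernels and cokernels, pin down $\phi(\im f)=p$ using the two semistability inequalities, and then invoke the see-saw property on the two canonical short exact sequences to get $\phi(\ker f)=\phi(\coker f)=p$ and verify semistability via the inclusion into $M$ (resp.\ the quotient of $N$). The only difference is that you explicitly check closure under finite direct sums, which the paper's proof subsumes in the sentence ``It is sufficient to show that $\mathcal{A}_{p}$ is closed under taking kernels and cokernels''; your version is slightly more self-contained on this point.
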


\begin{proof}
It is sufficient to show that $\mathcal{A}_{p}$ is closed under taking kernels and cokernels.
Let $f:M\to N$ be a morphism in $\mathcal{A}_{p}$.  If $f$ is zero or an isomorphism, the result follows at once.  Otherwise, consider the following short exact sequences in $\mathcal A$
$$ 0 \to \ker f \to M \to \im f \to 0$$
$$ 0 \to \im f \to N \to \coker f \to 0$$
where all these objects are nonzero. The semistability of $M$ implies $\phi(\im f) \geq \phi(M)=p$, while the semistability of $N$ implies $\phi(\im f)\leq \phi(N)=p$.  Consequently $\phi(\im f)=p$. The see-saw property applied to the two exact sequences yields $\phi(\ker f)=p$ and $\phi(\coker f)=p$. 

Moreover, every subobject $L$ of $\ker f$ is a subobject of $M$, thus $\phi(L)\le \phi(M)=\phi(\ker f)$. Therefore $\ker f$ is semi-stable and belongs to $\mathcal{A}_p$. Dually we show that $\coker f$ also belongs to $\mathcal{A}_p$, finishing the proof. 
\end{proof}

\begin{rmk}\label{simplestable}
It is easy to see that the stable objects with phase $ p$ are exactly the simple objects of the abelian category $\mathcal{A}_p$.
\end{rmk}

Motivated by Proposition \ref{fix-slope} one might also consider the following subcategories:
\begin{itemize}
\item $\mathcal{A}_{\geq p}=\{0\} \cup \{M\in {\mathcal A}: M\text{ is semi-stable and }\phi(M)\geq p\}$

\item $\mathcal{A}_{\leq p}=\{0\} \cup \{M\in {\mathcal A}: M\text{ is semi-stable and }\phi(M)\leq p\}$

\item $\mathcal{A}_{< p}=\{0\} \cup \{M\in {\mathcal A}: M\text{ is semi-stable and }\phi(M)<p\}$

\item $\mathcal{A}_{> p}=\{0\} \cup \{M\in {\mathcal A}: M\text{ is semi-stable and }\phi(M)> p\}$
\end{itemize}

However it turns out that these subcategories do not have good properties in general. We study in Proposition \ref{eqdeftorsion} below which construction leads to well-behaved subcategories, such as torsion classes.

The following theorem from \cite{Ru} implies that morphisms between semi-stable objects respect the order induced by $\phi$, that is, $\Hom_{\A}(M,N)=0$ whenever  $M,N$ are semi-stable and  $\phi(M)>\phi(N)$.  Observe that parts (b) and (c) are direct consequences of Proposition~\ref{fix-slope}.

\begin{theorem}\cite[Theorem 1]{Ru}\label{homzero}
Let $f: M \to N$ be a nonzero morphism in $\mathcal{A}$ between semi-stable objects $M,N$ such that $\phi(M) \ge \phi(N)$. Then
\begin{itemize}
\item[(a)] $\phi(M) = \phi(N)$;
\item[(b)] If $N$ is stable then $f$ is an epimorphism;
\item[(c)] If $M$ is stable then $f$ is a monomorphism.
\end{itemize}
\end{theorem}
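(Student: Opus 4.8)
The plan is to analyze $f$ through its canonical factorization $M \twoheadrightarrow \im f \hookrightarrow N$ and to play the see-saw property of Definition~\ref{seesaw} against the (semi-)stability of $M$ and $N$. Since $f\neq 0$ we have $\im f\neq 0$, so we may consider the two short exact sequences
$$0 \to \ker f \to M \to \im f \to 0, \qquad 0 \to \im f \to N \to \coker f \to 0.$$
The first step is to pin down the phase of $\im f$. As a quotient of the semi-stable object $M$, the remark following Definition~\ref{defss} (the description of semi-stability in terms of quotients) gives $\phi(\im f)\geq \phi(M)$. As a nonzero subobject of the semi-stable object $N$, semi-stability of $N$ gives $\phi(\im f)\leq \phi(N)$. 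Together with the hypothesis $\phi(M)\geq\phi(N)$ this yields
$$\phi(M)\leq\phi(\im f)\leq\phi(N)\leq\phi(M),$$
so all four phases coincide; in particular $\phi(M)=\phi(N)$, which is (a), and moreover $\phi(\im f)=\phi(M)=\phi(N)$.

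For (b), assume $N$ is stable and, for contradiction, that $f$ is not an epimorphism. Then $\im f$ is a nontrivial proper subobject of $N$, so stability of $N$ forces $\phi(\im f)<\phi(N)$, contradicting $\phi(\im f)=\phi(N)$; hence $f$ is epi. For (c), assume $M$ is stable and, for contradiction, that $f$ is not a monomorphism. Then $\ker f$ is a nontrivial proper subobject of $M$ (proper since $\im f\neq 0$ forces $\ker f\neq M$), so stability of $M$ gives $\phi(\ker f)<\phi(M)$; the see-saw property applied to $0\to\ker f\to M\to\im f\to 0$ then forces $\phi(M)<\phi(\im f)$, contradicting $\phi(\im f)=\phi(M)$; hence $f$ is mono. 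Alternatively, once (a) is known one may argue more conceptually: both $M$ and $N$ then lie in the abelian category $\mathcal{A}_p$ with $p=\phi(M)=\phi(N)$ of Proposition~\ref{fix-slope}, $f$ is still a nonzero morphism there, and by Remark~\ref{simplestable} stability of $N$ (resp.\ $M$) means $N$ (resp.\ $M$) is a simple object of $\mathcal{A}_p$; a nonzero morphism into a simple object is an epimorphism and a nonzero morphism out of a simple object is a monomorphism.

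There is no real obstacle here; the only care required is bookkeeping --- checking that each object entering a short exact sequence is nonzero before invoking a (semi-)stability inequality or the see-saw alternatives, and phrasing (b) and (c) as arguments by contradiction so that ``nontrivial proper subobject'' genuinely applies. The whole content of the theorem sits in the single squeeze $\phi(M)\leq\phi(\im f)\leq\phi(N)\leq\phi(M)$, after which (a) is immediate and (b), (c) follow from the strictness in the definition of stability together with the see-saw trichotomy.
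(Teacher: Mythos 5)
Your proof is correct and follows the standard argument from Rudakov's Theorem 1, which the paper cites rather than reproves: factor $f$ through its image, squeeze $\phi(\im f)$ between $\phi(M)$ and $\phi(N)$ to get (a), then use strict inequalities for (b) and (c). Your alternative argument for (b) and (c) via Proposition~\ref{fix-slope} and Remark~\ref{simplestable} is precisely what the paper itself observes when it notes that ``parts (b) and (c) are direct consequences of Proposition~\ref{fix-slope}.''
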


We note the following consequence, which is also clear from the fact that stable objects are simple in the full subcategory $\mathcal{A}_{p}$.

\begin{cor}\label{noniso}
Let $M,N\in\A$ be two nonisomorphic stable objects having the same phase. Then $\emph{Hom}_{\A}(M,N)=0$.
\end{cor}

\begin{rmk}
As observed in \cite{Ru}, Theorem \ref{homzero} implies that stable objects are Schur objects when $\A$ is a Hom-finite $k$-category over an algebraically closed field $k$. Here $M$ is called Schur object when End$(M) \simeq k$. This implies in particular that stable objects are indecomposable. In fact, it is easy to see that stable objects are always indecomposable, for any abelian category $\A$.
\end{rmk}

%%%%%%%%%%%%%%%%%%%%%%%%%%%%%%%%%%%%%%
\subsection{Harder-Narasimhan filtration} From now on, we assume that the abelian category $\mathcal A$ is a \textit{length category}, that is, each object $M$ admits a filtration 
$$0=M_0\subsetneq M_1\subsetneq M_2\subsetneq \dots \subsetneq M_{l-1} \subsetneq M_l=M$$
such that the quotients $M_i/M_{i-1}$ are simple. In particular, $\mathcal{A}$ is both noetherian and artinian. For  a finite dimensional $k$-algebra $A$ over a field  $k$, the category mod\;$A$ of finitely generated $A$-modules is a length category.

We borrow the following terminology from \cite{B}, however the concept was already used in \cite{Ru}.

\begin{defi}\label{defMDQ}
Let $\mathcal{A}$ be an abelian length category and let $M\in \mathcal A$.
\begin{enumerate}[label=(\alph*)]
\item A pair $(N,p)$ consisting of an object $N\in\mathcal{A}$ and an epimorphism $p:M\rightarrow N$ is said to be a \textit{maximally destabilizing quotient} of $M$ if $\phi(M)\geq\phi(N)$ and every other epimorphism $p':M\rightarrow N'$ satisfies $\phi(N')\geq\phi(N)$, and moreover, if  $\phi(N)=\phi(N')$, then the morphism $p'$ factors through $p$. 
\item A pair $(L,i)$ consisting of an object $L\in\mathcal{A}$ and a monomorphism $i:L\rightarrow M$ is a \textit{maximally destabilizing subobject } of $M$ if $\phi(M)\leq\phi(L)$ and every other monomorphism $i':L'\rightarrow M$ satisfies $\phi(L')\leq\phi(L)$, and moreover, if  $\phi(L)=\phi(L')$ then the morphism $i'$ factors through $i$.
\end{enumerate}
\end{defi}

We sometimes omit the epimorphism $p$ when referring to a maximally destabilizing quotient, similarly for  maximally destabilizing subobjects.

\begin{lem}\label{MDQunicity}
Let  $\phi:Obj(\mathcal{A})\rightarrow \mathcal{P}$ be a stability function on $\mathcal{A}$ and let $0\neq M$ be an object in $\mathcal{A}$. Then:
\begin{enumerate}[label=(\alph*)]
\item[(a)] The maximally destabilizing object $(N,p)$ of $M$ is semi-stable and unique up to isomorphism;
\item[(b)] The maximally destabilizing subobject $(L,i)$ of $M$ is semi-stable and unique up to isomorphism.
\end{enumerate}
\end{lem}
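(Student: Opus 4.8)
The plan is to establish both existence and uniqueness of the maximally destabilizing quotient; the statement for the maximally destabilizing subobject follows by a dual argument, working in $\A^{op}$. So fix $0 \neq M \in \A$ and consider the set of phases $\{\phi(N) : p':M \twoheadrightarrow N'\text{ an epimorphism}, N'\neq 0\}$. Since $\A$ is a length category, $M$ has finite length, so it has only finitely many quotient objects up to isomorphism, hence this is a finite set of phases in the totally ordered set $\P$; let $q$ be its minimum. The first task is to single out a canonical quotient achieving this minimal phase. Among all epimorphisms $p':M \twoheadrightarrow N'$ with $\phi(N')=q$, choose one, say $p:M\twoheadrightarrow N$, whose kernel $\ker p$ has \emph{maximal} length (again possible since lengths are bounded by that of $M$). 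I claim this $(N,p)$ is the maximally destabilizing quotient.

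First I would check $\phi(M)\geq \phi(N)=q$: indeed $\operatorname{id}_M:M\to M$ is an epimorphism, so $q\leq \phi(M)$ by minimality. Next, that $N$ is semi-stable: let $N''$ be a proper nonzero quotient of $N$, so $N''$ is also a quotient of $M$, whence $\phi(N'')\geq q$; applying the see-saw property to $0\to \ker(N\to N'')\to N\to N''\to 0$ and using $\phi(N)=q\leq \phi(N'')$ forces $\phi(\ker(N\to N''))\leq q = \phi(N)$, which is exactly semi-stability read through quotients (using the remark after Definition \ref{defss}). Actually, a cleaner route: the condition $\phi(N'')\geq \phi(N)$ for every quotient $N''$ of $N$ is itself the semi-stability criterion, and we just verified it.

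The key remaining point is the universal property: given any epimorphism $p':M\twoheadrightarrow N'$ with $\phi(N')=q$, the map $p'$ factors through $p$. Consider the induced map $M\to N\times N'$ (using both $p$ and $p'$) and let $K$ be the image of $M$ in $N\times N'$, so we get epimorphisms $M\twoheadrightarrow K$ and projections $K\to N$, $K\to N'$. The kernel of $M\twoheadrightarrow K$ equals $\ker p\cap \ker p' \subseteq \ker p$. If this containment were strict, then $K$ would be a quotient of $M$ with $\operatorname{length}(\ker(M\to K))<\operatorname{length}(\ker p)$; one then checks $\phi(K)=q$ as well — $K$ surjects onto $N$ so $\phi(K)\leq$ something and $K$ is a quotient of $M$ so $\phi(K)\geq q$, and the see-saw on $0\to \ker(K\to N)\to K\to N\to 0$ pins down $\phi(K)=q$ — contradicting the maximality of $\operatorname{length}(\ker p)$. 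Hence $\ker p'\supseteq \ker p\cap\ker p' = \ker p$, which gives the desired factorization $p' = (\text{some map})\circ p$, and simultaneously shows uniqueness: if $(N,p)$ and $(\tilde N,\tilde p)$ both satisfy the axioms, each factors through the other, so the comparison maps are mutually inverse isomorphisms.

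The main obstacle I anticipate is the bookkeeping in that last paragraph: making sure that the image object $K\subseteq N\times N'$ really does have phase $q$ and really does have a smaller kernel unless $\ker p\subseteq\ker p'$, i.e. that the maximal-length choice is the right invariant to extremize. The see-saw property does all the work, but one must be careful that none of the intermediate objects ($\ker(K\to N)$, etc.) is zero before invoking it, handling the degenerate cases (e.g. $N' = 0$, or $p'$ an isomorphism) separately. Everything else is a routine application of Theorem \ref{homzero} and the length hypothesis on $\A$.
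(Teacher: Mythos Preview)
Your arguments for semi-stability and for uniqueness are correct and coincide with the paper's: semi-stability holds because every quotient of $N$ is again a quotient of $M$ and hence has phase at least $\phi(N)$; uniqueness is exactly the mutual-factorization step you give at the end (if $(N,p)$ and $(\tilde N,\tilde p)$ both satisfy the axioms, each factors through the other, and the two comparison maps are inverse isomorphisms because $p,\tilde p$ are epic). The paper's proof consists of precisely these two observations and nothing more, since existence is \emph{not} part of Lemma~\ref{MDQunicity} --- it is deferred to Theorem~\ref{HN}, quoting Rudakov.

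The extra work you do towards existence, however, rests on a false claim: a finite-length object need \emph{not} have only finitely many quotient objects up to isomorphism. Over $A=k[x,y]/(x^2,xy,y^2)$ with $k$ infinite, the regular module $A$ has length~$3$, yet the quotients $A/(\alpha x+\beta y)$ for $[\alpha:\beta]\in\mathbb{P}^1(k)$ are pairwise non-isomorphic (their annihilators are the distinct ideals $(\alpha x+\beta y)$; concretely, $x$ acts as zero on $A/(x)$ but not on $A/(x+y)$). So your justification for the existence of the minimum phase $q$ breaks down. In this particular example the see-saw property happens to force all those quotients to the single phase $\phi(k)$, but in general there is no a priori reason the phase set of quotients of $M$ should be finite or attain a minimum; establishing this is the actual content of Rudakov's inductive argument.

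A smaller gap: in checking $\phi(K)=q$ for the image $K\subseteq N\oplus N'$, the phrase ``$K$ surjects onto $N$ so $\phi(K)\leq$ something'' is left dangling. What makes it work is that $\ker(K\to N)=\ker p/(\ker p\cap\ker p')$ embeds into $N'$ via the second projection, and $N'$ is itself semi-stable of phase $q$ (same reasoning as for $N$), so $\phi(\ker(K\to N))\leq q$; combined with $\phi(K)\geq q$, the see-saw then forces $\phi(K)=q$.
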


\begin{proof}
Consider a maximally destabilizing quotient $(N,p)$ of $M$ and let $\widetilde{N}$  be a quotient of $N$ with quotient map $p': N\ra \widetilde{N}\ra 0$. The composition $p'p$ is an epimorphism from $M$ to $\widetilde{N}$, hence $\phi(\widetilde{N})\geq \phi(N)$ by the definition of maximally destabilizing quotient. Therefore $N$ is semi-stable. 

Suppose that $M$ admits two different maximally destabilizing quotients $(N,p)$ and $(N',p')$. Then $\phi(N)\leq\phi(N')$ and $\phi(N')\leq\phi(N)$, thus they have the same phase. Definition \ref{defMDQ} implies the existence of morphisms $f: N\rightarrow N'$ and $f':N'\rightarrow N$ such that $p=f'p'$ and $p'=fp$, thus  $p=f'p'=f'fp$. Hence the composition $f'f$ is the identity map in $N$ because $p$ is an epimorphism. Likewise, $ff'$ is the identity map in $N'$, and so $N$ and $N'$ are isomorphic, which finishes the proof of statement (a). Statement (b) is shown dually.
\end{proof}

%\begin{rmk}
%Since the maximally destabilizing quotients and the maximally destabilizing subobjects are unique up to isomorphism, we will refer to $(N,p)$ from now on as the maximally destabilizing object of $M$ and to  $(L,i)$ as the maximally destabilizing subobject of $M$.
%\end{rmk}
% Thomas: I do not think we need to say that
 
The following theorem from \cite{Ru} implies in particular that  every object admits a maximally destabilizing quotient and a maximally destabilizing subobject.

\begin{theorem}\cite[Theorem 2, Proposition 1.13]{Ru}\label{HN}
Let $\mathcal{A}$ be an abelian length category with a stability function $\phi:Obj\mathcal{A}\rightarrow \mathcal{P}$, and let $M$ be a nonzero object in $\mathcal{A}$.  Up to isomorphism, $M$ admits a unique \textit{Harder-Narasimhan filtration}, that is a filtration 
$$0=M_0\subsetneq M_1\subsetneq M_2\subsetneq \dots \subsetneq M_{n-1} \subsetneq M_n=M$$
such that 
\begin{enumerate}[label=(\alph*)]
\item the quotients $F_i=M_i/M_{i-1}$ are semi-stable, 
\item $\phi(F_n)<\phi(F_{n-1})<\dots<\phi(F_2)<\phi(F_1)$.
\end{enumerate}
Moreover, $F_1=M_1$ is the maximally destabilizing subobject of $M$ and $F_n=M_n/M_{n-1}$ is the maximally destabilizing quotient of $M$.
\end{theorem}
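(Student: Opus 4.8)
The plan is to reduce the statement to one substantial input --- that every nonzero object $M$ admits a maximally destabilizing subobject, and dually a maximally destabilizing quotient, in the sense of Definition~\ref{defMDQ} --- and then to build the filtration by induction on the length $\ell(M)$, treating uniqueness separately through the Hom-vanishing of Theorem~\ref{homzero}.

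\emph{Constructing the filtration, given the maximally destabilizing subobject.} Write $L_1$ for the maximally destabilizing subobject of $M$; it is semi-stable and unique by Lemma~\ref{MDQunicity}(b). Induct on $\ell(M)$. If $M$ is semi-stable, $0\subsetneq M$ works. Otherwise $M$ has a subobject of phase above $\phi(M)$, so $\phi(L_1)>\phi(M)$, hence $L_1\subsetneq M$ and $\ell(M/L_1)<\ell(M)$; take a Harder--Narasimhan filtration of $M/L_1$ by induction, pull it back along $\pi\colon M\to M/L_1$, and prepend $L_1$. The resulting factors are $L_1$ followed by the factors of $M/L_1$, all semi-stable, with strictly decreasing phases except possibly at the first step; but if $\phi(L_1)$ were not strictly above the phase of the maximally destabilizing subobject $\bar L$ of $M/L_1$, the see-saw property applied to $0\to L_1\to\pi^{-1}(\bar L)\to\bar L\to 0$ would give a subobject $\pi^{-1}(\bar L)\supsetneq L_1$ of $M$ of phase $\ge\phi(L_1)$, contradicting the maximality clauses in Definition~\ref{defMDQ}.

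\emph{Uniqueness and the outermost factors.} For any Harder--Narasimhan filtration $0=M_0\subsetneq\cdots\subsetneq M_n=M$ with factors $F_i$, one first checks that $M_1=F_1$ is the maximally destabilizing subobject: for $0\neq L\subseteq M$, the chain $L\cap M_\bullet$ has quotients embedding into the semi-stable $F_i$, so of phase $\le\phi(F_i)\le\phi(F_1)$, and iterating the see-saw property (the phase of an extension is at most the maximum of the phases of sub and quotient) yields $\phi(L)\le\phi(F_1)$; moreover if $\phi(L)=\phi(F_1)$ but $L\not\subseteq M_1$, then choosing the least $j$ with $L\subseteq M_j$ (so $j\ge 2$) the quotient $L/(L\cap M_{j-1})$ embeds into $F_j$ with phase $<\phi(F_1)$, whence the see-saw property forces $\phi(L\cap M_{j-1})>\phi(F_1)$, against the bound just proved. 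Thus $M_1$ is intrinsic (Lemma~\ref{MDQunicity}(b)), and uniqueness of the rest follows by applying the induction hypothesis to $M/M_1$; in particular $F_1=M_1$ is the maximally destabilizing subobject. That $F_n$ is the maximally destabilizing quotient follows dually, or: writing $p\colon M\to N$ for the maximally destabilizing quotient, one has $\phi(M_1)\ge\phi(M)\ge\phi(N)$, and when the first inequality is strict Theorem~\ref{homzero} gives $\Hom(M_1,N)=0$, so $p$ factors through $M/M_1$ and induction identifies it with the bottom factor; the case $\phi(M_1)=\phi(N)$ forces $M$ semi-stable, where the claim is trivial.

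\emph{The crux: existence of the maximally destabilizing subobject.} It is enough to produce a semi-stable subobject of $M$ whose phase dominates $\phi(L')$ for all subobjects $L'\subseteq M$; the ``largest such'' clause of Definition~\ref{defMDQ} is then arranged by replacing it with the sum of all subobjects of that maximal phase, which still has that phase by the see-saw property and is a finite sum by the noetherian hypothesis. That $M$ has \emph{some} semi-stable subobject $K$ with $\phi(K)\ge\phi(M)$ is an easy induction on $\ell(M)$ (take $K=M$ if $M$ is semi-stable, otherwise pass to a subobject of strictly larger phase and strictly smaller length). Since $\mathcal A$ is a length, hence noetherian, category, the nonempty set of semi-stable subobjects of phase $\ge\phi(M)$ then has an inclusion-maximal member $L_1$, and it remains to show $L_1$ dominates every subobject. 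The key phase estimate is that for semi-stable subobjects $K,K'$ of $M$, the see-saw property applied to $0\to K\cap K'\to K\oplus K'\to K+K'\to 0$, together with the fact that $\phi(K\oplus K')$ lies weakly between $\phi(K)$ and $\phi(K')$ while $\phi(K\cap K')\le\min(\phi(K),\phi(K'))$, forces $\phi(K+K')\ge\min(\phi(K),\phi(K'))$ --- so that sums of destabilizing subobjects stay proper and destabilizing. Starting from a hypothetical subobject of phase $>\phi(L_1)$, made semi-stable by passing to its maximally destabilizing subobject, one enlarges $L_1$ by repeatedly forming such sums and replacing them by their maximally destabilizing subobjects --- each step legitimate by the induction on length, as proper subobjects of $M$ are shorter --- and uses the descending chain condition to conclude the process stabilizes at a semi-stable subobject strictly containing $L_1$ of phase $\ge\phi(L_1)$, contradicting maximality. \textbf{This termination step}, precisely where the noetherian and artinian hypotheses are used in tandem, is the part I expect to be delicate; everything else is careful bookkeeping with the see-saw property.
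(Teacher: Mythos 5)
The paper does not prove this theorem: it is quoted from Rudakov with a citation, so there is no in-paper argument to compare against; I can only assess the proposal on its own merits. The first two parts of your plan are sound and essentially follow Rudakov: iterating the maximally destabilizing subobject to produce the filtration (with the strict phase drop $\phi(L_1)>\phi(\bar L)$ correctly extracted from Definition~\ref{defMDQ}), and proving uniqueness together with the identification of the outermost factors via the Hom-vanishing of Theorem~\ref{homzero}. One small omission there: when you take the least $j\ge 2$ with $L\subseteq M_j$, you should also dispose of the degenerate case $L\cap M_{j-1}=0$, in which $L$ embeds directly into $F_j$ and $\phi(L)<\phi(F_1)$ is immediate.

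The genuine gap is exactly where you flag it, in the existence of the maximally destabilizing subobject. Having chosen $L_1$ inclusion-maximal among semi-stable subobjects with $\phi(L_1)\ge\phi(M)$ and a hypothetical $L'$ with $\phi(L')>\phi(L_1)$ and maximally destabilizing subobject $K$, you form $L_1+K$ (proper of phase $>\phi(L_1)$, as you note) and then ``replace it by its maximally destabilizing subobject.'' But that replacement realizes the maximal phase of $L_1+K$, which is strictly above $\phi(L_1)$, so it has no reason to contain $L_1$; the step does not ``enlarge $L_1$,'' and the iterates form neither an ascending nor a descending chain, so neither chain condition produces a termination or a contradiction with the maximality of $L_1$. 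A working version argues on the sums directly rather than on $L_1$: let $\Sigma$ be the set of finite sums of maximally destabilizing subobjects (obtained by length induction) of proper subobjects of $M$ of phase $>\phi(M)$. Your min-phase estimate shows every $N\in\Sigma$ has $\phi(N)>\phi(M)$, hence $N\subsetneq M$; by the noetherian hypothesis $\Sigma$ has a maximal element $N_0\subsetneq M$, whose maximally destabilizing subobject has some phase $p^*$. If some $L\subseteq M$ had $\phi(L)>p^*$, its maximally destabilizing subobject $K$ would have $\phi(K)>p^*$ and hence $K\not\subseteq N_0$ (all subobjects of $N_0$ have phase $\le p^*$), so $N_0+K\in\Sigma$ strictly enlarges $N_0$, a contradiction. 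Thus $p^*$ is the global maximal phase, and the sum of all subobjects of $M$ of phase $p^*$ (again finite by ACC, and of phase $p^*$ by your see-saw computation) is the maximally destabilizing subobject.
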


For further use, it is also worthwhile to recall the following weaker version of a result from Rudakov.

\begin{theorem}\cite[Theorem 3]{Ru}\label{JH}
Let $\mathcal{A}$ be an abelian length category with a stability function $\phi:Obj\mathcal{A}\rightarrow \mathcal{P}$, and let $M$ be a semi-stable object in $\mathcal{A}$.  There exists a filtration 
$$0=M_0\subsetneq M_1\subsetneq M_2\subsetneq \dots \subsetneq M_{n-1} \subsetneq M_n=M$$
such that 
\begin{enumerate}[label=(\alph*)]
\item the quotients $G_i=M_i/M_{i-1}$ are stable, 
\item $\phi(M)=\phi(G_{n-1})=\dots=\phi(G_2)=\phi(G_1)$.
\end{enumerate}
Moreover, the Jordan-H\"older property holds, in the sense that the set $\{G_i\}$ of factors is uniquely determined up to isomorphism.
\end{theorem}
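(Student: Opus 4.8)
The plan is to describe explicitly the chain of torsion classes induced by $\phi_\gamma$ and then to decide when it is non-refinable. Since $\gamma(0)=(1,\dots,1)$ and $\gamma(1)=(-1,\dots,-1)$, for every $0\neq M$ we have $\theta_{\gamma(0)}([M])>0>\theta_{\gamma(1)}([M])$, so $t_M\in(0,1)$. For $0\neq M$ let $F(M)$ denote the bottom Harder--Narasimhan factor of $M$, which by Theorem \ref{HN} is its maximally destabilizing quotient; hence $M\in\T_p$ if and only if $\phi_\gamma(F(M))\geq p$. As $F(M)$ is semi-stable and nonzero, Theorem \ref{JH} yields a stable subquotient of $F(M)$ of the same phase, necessarily one of $M_1,\dots,M_n$; so $\phi_\gamma(F(M))$ lies in the finite set $\{t_{M_1},\dots,t_{M_n}\}$, and conversely $t_{M_i}=\phi_\gamma(F(M_i))$. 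Writing $0<q_1<\dots<q_r<1$ for the distinct values of the $t_{M_i}$, and $q_0:=0$, $q_{r+1}:=1$, it follows that $\T_p$ depends only on the set $\{k:q_k\geq p\}$, hence $p\mapsto\T_p$ is constant on each interval $(q_k,q_{k+1}]$, and as $p$ decreases we obtain a chain $\{0\}=\mathcal U_r\subsetneq\dots\subsetneq\mathcal U_0=\mod A$ of torsion classes — the inclusions strict because the stable module of phase $q_k$ lies in $\mathcal U_{k-1}$ but not in $\mathcal U_k$, and the extremes as stated because $\phi_\gamma(F(M))\geq q_1\geq p$ whenever $p\leq q_1$ and $\phi_\gamma(F(M))<p$ whenever $p>q_r$. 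Thus $\gamma$ induces a maximal green sequence if and only if each inclusion $\mathcal U_k\subsetneq\mathcal U_{k-1}$ is a covering relation in the lattice of torsion classes of $\mod A$.

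Next I would analyse one such step. Fix $k$, put $q=q_k$ and pick $q'\in(q_k,q_{k+1})$, so $\mathcal U_{k-1}=\T_q$ and $\mathcal U_k=\T_{q'}$. For a module $B$ let $\langle\T_{q'},B\rangle$ be the smallest torsion class containing $\T_{q'}$ and $B$; since vanishing of $\Hom(-,Y)$ passes to quotients, direct sums and extensions of the first argument, its torsion-free class equals $\F_{q'}\cap B^{\perp}$. Assume first that $B$ is the \emph{unique} stable module of phase $q$. Using Theorem \ref{JH} — every semi-stable object of phase $q$ is filtered by copies of $B$ — one verifies that $\F_{q'}\cap B^{\perp}=\F_{q}$: an object of $\F_q$ receives no nonzero map from $B$ (its image would be a quotient of $B$, hence of phase $\geq q$, inside an object all of whose subobjects have phase $<q$), and conversely the maximally destabilizing subobject $L$ of an object $M\in\F_{q'}\cap B^{\perp}$ is semi-stable of phase in $\{q_1,\dots,q_r\}$ and $<q'$, so of phase $\leq q$, and cannot have phase $q$ (else $L$, hence $M$, would contain a copy of $B$, contradicting $M\in B^{\perp}$). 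Thus $\langle\T_{q'},B\rangle=\T_q$. Now any torsion class $\mathcal V$ with $\T_{q'}\subsetneq\mathcal V\subseteq\T_q$ contains a module $M\notin\T_{q'}$, for which $q\leq\phi_\gamma(F(M))<q'$ forces $\phi_\gamma(F(M))=q$; then the quotient $F(M)$ lies in $\mathcal V$ and, by Theorem \ref{JH}, surjects onto $B$, so $B\in\mathcal V$ and $\mathcal V\supseteq\langle\T_{q'},B\rangle=\T_q$, i.e.\ $\mathcal V=\T_q$; the step is a covering relation. If instead two non-isomorphic stable modules $B,B'$ have phase $q$, then $\langle\T_{q'},B\rangle$ lies strictly between $\T_{q'}$ and $\T_q$: it contains $B\notin\T_{q'}$ but not $B'$, since $B'$ belongs to its torsion-free class $\F_{q'}\cap B^{\perp}$ — one has $B'\in\F_{q'}$ (all subobjects of $B'$ have phase $\leq q<q'$) and $\Hom(B,B')=0$ by Corollary \ref{noniso} — while a nonzero module cannot lie in a torsion class and its torsion-free class simultaneously. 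Combining both cases, $\gamma$ induces a maximal green sequence exactly when each $q_k$ carries a single stable module, i.e.\ $r=n$, i.e.\ the $t_{M_i}$ are pairwise distinct; this is the asserted equivalence.

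For the last statement I would argue in the opposite direction, starting from an arbitrary maximal green sequence $\mathcal E\colon\{0\}=\T^0\subsetneq\T^1\subsetneq\dots\subsetneq\T^m=\mod A$. Using the lattice-theoretic properties of $\mathcal E$ together with the $\tau$-tilting theory of Section 4, one first checks that each $\T^i$ is functorially finite, hence equals $\Fac M^i$ for a $\tau$-tilting pair $(M^i,P^i)$; by Theorem \ref{introwalls} this produces a chamber $\mathfrak C_i:=\Ch_{(M^i,P^i)}$ with $\T_{\mathfrak C_i}=\Fac M^i=\T^i$, where $\mathfrak C_m$ contains $(1,\dots,1)$, $\mathfrak C_0$ contains $(-1,\dots,-1)$, and the adjacent chambers $\mathfrak C_i$ and $\mathfrak C_{i-1}$ share a wall $W_i$ because $\T^{i-1}\lessdot\T^i$ is a covering relation. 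Choosing generically a point in the interior of each $\mathfrak C_i$ and a point in the relative interior of each $W_i$, and joining consecutive choices by line segments, one obtains a piecewise-linear path $\gamma$ running from $(1,\dots,1)$ through $\mathfrak C_m,\mathfrak C_{m-1},\dots,\mathfrak C_0$ to $(-1,\dots,-1)$ and crossing each $W_i$ transversally exactly once while meeting no other wall. One then verifies that $\gamma$ is a green path, that its $\phi_\gamma$-stable modules are precisely the $m$ bricks labelling $W_1,\dots,W_m$ — finitely many, with pairwise distinct phases because the walls are crossed at distinct times — and that the chain of torsion classes it induces is exactly $\T^0\subsetneq\dots\subsetneq\T^m$, since on the portion of $\gamma$ lying in $\mathfrak C_i$ the induced torsion class is $\T_{\mathfrak C_i}=\T^i$. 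By the equivalence already proved this chain is a maximal green sequence, namely $\mathcal E$.

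The main obstacle is in this last step: one must check that the piecewise-linear path just constructed is genuinely a green path, i.e.\ that for \emph{every} module $M$ — not only the finitely many stable ones — the continuous, piecewise-affine function $t\mapsto\theta_{\gamma(t)}([M])$ has exactly one zero; this is where the generic choice of the waypoints is essential, and it is also what guarantees that $\gamma$ avoids every wall besides $W_1,\dots,W_m$. A subsidiary point, again drawing on Section 4, is the functorial finiteness of the torsion classes occurring in an arbitrary maximal green sequence, which is needed before one may pass to the $\tau$-tilting and chamber picture at all.
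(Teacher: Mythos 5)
Your proposal does not prove the stated result. Theorem~\ref{JH} is an abstract statement about a single semi-stable object $M$ in an arbitrary abelian length category $\mathcal{A}$ equipped with a stability function $\phi$: it asserts that $M$ admits a filtration whose subquotients are stable of phase $\phi(M)$, and that the multiset of subquotients is independent of the chosen filtration. The paper cites this from Rudakov \cite{Ru} rather than proving it; the underlying argument is the classical Jordan--H\"older theorem applied inside the abelian category $\mathcal{A}_{\phi(M)}$ of Proposition~\ref{fix-slope}, which is again a length category and whose simple objects are, by Remark~\ref{simplestable}, exactly the stable objects of phase $\phi(M)$.

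What you have actually written is a proof sketch of a different theorem, namely the green path theorem from the introduction, made precise in Corollary~\ref{maximal-paths} and Theorem~\ref{carac}: a green path $\gamma$ in the wall and chamber structure of a finite-dimensional algebra $A$ induces a maximal green sequence if and only if the finitely many $\phi_\gamma$-stable modules have pairwise distinct phases, and every maximal green sequence arises this way. The setting is different ($\mod A$ with a green path $\gamma:[0,1]\rightarrow\mathbb{R}^n$, not a general abelian length category with a stability function), the conclusion is different (non-refinability of a chain of torsion classes, not a Jordan--H\"older property for one semi-stable object), and indeed your argument invokes Theorem~\ref{JH} as an ingredient rather than proving it. As an attempt at the stated theorem the proposal therefore does not engage with it at all; a correct argument should reduce the uniqueness of stable factors of a semi-stable $M$, via Proposition~\ref{fix-slope} and Remark~\ref{simplestable}, to the ordinary Jordan--H\"older theorem in the length category $\mathcal{A}_{\phi(M)}$.
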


%%%%%%%%%%%%%%%%%%%%%%%%%%%%%%%%%%%%%%
\subsection{Torsion pairs}\label{Sect:torsion} 

It is well-known that a subcategory $\mathcal{T}$ of $\mathcal{A}$ is the torsion class of a torsion pair if and only if $\mathcal{T}$ is closed under quotients and extensions. Dually, a subcategory $\mathcal{F}$ of $\mathcal{A}$ is the torsion-free class of a torsion pair if and only if $\mathcal{F}$ is closed under subobjects and extensions. 

In this section, we show that a stability function $\phi: Obj \A \to \mathcal P$  induces a torsion pair $(\mathcal{T}_p, \mathcal{F}_p)$ in $\mathcal{A}$ for every $p\in\mathcal{P}$, with
$$\mathcal{T}_{p}=\{M\in Obj(\mathcal A) \ | \ \phi(M')\geq p, \text{ where $M'$ is the maximally destabilizing quotient of }M\}$$
$\mathcal{F}_p=\{M\in Obj(\mathcal{A}) \ | \ \phi(M'')<p\text{ for the maximally destabilizing subobject $M''$  of $M$}\}$
\bigskip

The following proposition not only shows that $\T_p$ is a torsion class, but also gives a series of equivalent characterizations. 

\begin{prop}\label{eqdeftorsion}
Let $\phi: \A\ra \P$ be a stability function and consider the full subcategory $\T_p$ of $\A$ defined above. Then:
\begin{enumerate}[label=(\alph*)]
\item $\T_p$ is a torsion class;
\item $\mathcal{T}_{p}=\text{\emph{Filt}}(\mathcal{A}_{\geq p})$;
\item $\T_p=\text{\emph{Filt}}(\text{\emph{Fac}}\mathcal{A}_{\geq p})$;
\item $\T_p=\{M\in\A \ : \phi(N)\geq p \text{ for every quotient $N$ of $M$}\}$.
\end{enumerate}
\end{prop}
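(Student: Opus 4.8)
The plan is to prove the four characterizations by establishing a cycle of implications, using the Harder–Narasimhan filtration as the main tool. First I would record the key observation underlying everything: by the see-saw property, for any object $M$ with maximally destabilizing quotient $M'$, the phase $\phi(M')$ is the \emph{minimum} of $\phi(N)$ over all nonzero quotients $N$ of $M$ — this is immediate from Definition~\ref{defMDQ}. Dually, $\phi(M'')$ for the maximally destabilizing subobject is the \emph{maximum} of $\phi(L)$ over all nonzero subobjects $L$. Granting this, the equivalence of the original definition of $\T_p$ with characterization (d) is essentially tautological, so I would dispatch (d) first, or fold it in at the end of the cycle.

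Next I would set up the cycle (a) $\Leftarrow$ (b), (b) $\Rightarrow$ (c) is trivial (since $\Fac \A_{\geq p} \supseteq \A_{\geq p}$ gives $\supseteq$, and $\Fac \A_{\geq p} \subseteq \T_p$ once we know $\T_p$ is closed under quotients and contains $\A_{\geq p}$, so $\mathrm{Filt}(\Fac\A_{\geq p}) \subseteq \mathrm{Filt}(\T_p) = \T_p$ using that a torsion class is closed under filtrations/extensions), and the substantive steps are: every $M \in \T_p$ (in the original/equivalent-to-(d) sense) lies in $\mathrm{Filt}(\A_{\geq p})$, and conversely $\mathrm{Filt}(\A_{\geq p})$ is contained in $\T_p$ and is a torsion class. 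For the forward direction I would invoke Theorem~\ref{HN}: take the HN filtration $0 = M_0 \subsetneq \dots \subsetneq M_n = M$ with semistable quotients $F_i$ and strictly decreasing phases $\phi(F_n) < \dots < \phi(F_1)$. Since $F_n = M/M_{n-1}$ is the maximally destabilizing quotient, $M \in \T_p$ forces $\phi(F_n) \geq p$, hence $\phi(F_i) \geq p$ for all $i$, so every $F_i \in \A_{\geq p}$ and the HN filtration exhibits $M$ as an element of $\mathrm{Filt}(\A_{\geq p})$. This gives $\T_p \subseteq \mathrm{Filt}(\A_{\geq p})$.

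For the reverse inclusion $\mathrm{Filt}(\A_{\geq p}) \subseteq \T_p$, and simultaneously for (a), I would show directly that the class $\T_p$ (as defined) is closed under quotients and extensions, which by the standard criterion recalled at the start of Section~\ref{Sect:torsion} makes it a torsion class; then since $\A_{\geq p} \subseteq \T_p$ (a semistable $M$ with $\phi(M) \geq p$ has all quotients of phase $\geq \phi(M) \geq p$, so its maximally destabilizing quotient has phase $\geq p$), closure under extensions gives $\mathrm{Filt}(\A_{\geq p}) \subseteq \T_p$. Closure under quotients is clear from characterization (d): a quotient of a quotient of $M$ is a quotient of $M$. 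Closure under extensions is the point requiring care: given $0 \to M_1 \to M \to M_2 \to 0$ with $M_1, M_2 \in \T_p$, and any nonzero quotient $N$ of $M$ with quotient map $q$, I would look at $q(M_1) \subseteq N$. Then $N/q(M_1)$ is a quotient of $M_2$, hence has phase $\geq p$; and $q(M_1)$ is a quotient of $M_1$, hence either zero or of phase $\geq p$. Now apply the see-saw property to $0 \to q(M_1) \to N \to N/q(M_1) \to 0$: in all three cases ($q(M_1) = 0$, $q(M_1) = N$, or a genuine short exact sequence with both outer terms of phase $\geq p$) one concludes $\phi(N) \geq p$. Hence $M \in \T_p$.

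I expect the extension-closure argument to be the main obstacle, mostly because of bookkeeping around the degenerate cases and the need to verify that the see-saw trichotomy really does yield $\phi(N) \geq p$ in each branch (the equality branch and both strict branches). Everything else is either a direct appeal to Theorem~\ref{HN}, a triviality about quotients, or the standard torsion-class criterion. Once (a), (b), (c) are linked by the cycle and (d) is identified with the defining condition via the minimality remark, the proof is complete; I would write (d) either as the anchor of the cycle or as a short closing paragraph noting that ``$\phi(M') \geq p$ for the maximally destabilizing quotient $M'$'' is by definition equivalent to ``$\phi(N) \geq p$ for every quotient $N$''.
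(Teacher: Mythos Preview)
Your argument is correct and follows essentially the same strategy as the paper: verify the torsion-class axioms directly, and use the Harder--Narasimhan filtration (Theorem~\ref{HN}) to obtain $\T_p \subseteq \mathrm{Filt}(\A_{\geq p})$. The organization differs in two minor but worthwhile ways. First, you anchor everything on characterization~(d) from the outset, which makes closure under quotients a one-line tautology; the paper instead proves quotient-closure via the maximally destabilizing quotient and only records~(d) at the end. Second, your extension-closure argument works with an arbitrary quotient $N$ of the middle term and applies the see-saw trichotomy to the induced sequence $0 \to q(M_1) \to N \to N/q(M_1) \to 0$, whereas the paper argues with the specific maximally destabilizing quotient $M'$ and a commutative diagram; both are clean, but yours avoids invoking semistability of $M'$. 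Finally, for part~(c) the paper appeals to \cite[Proposition~3.3]{DIJ} to identify $\mathrm{Filt}(\Fac \A_{\geq p})$ as the smallest torsion class containing $\A_{\geq p}$, while you close the chain of inclusions directly from~(a) and~(b), making your proof self-contained. None of these differences is substantive, but your version is marginally more elementary.
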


\begin{proof}
\textit{1.} We need to show that $\mathcal{T}_{p}$ is closed under extensions and quotients. 

To show that $\mathcal{T}_{p}$ is closed under extensions, suppose that  
$$0\rightarrow L\overset{f}\rightarrow M\overset{g}\rightarrow N\rightarrow 0$$ is a short exact sequence in $\mathcal{A}$ with $L, N\in\mathcal{T}_{p}$.  Let $(M',p_M)$ be the maximally destabilizing quotient of $M$. Then we can construct the following commutative diagram.
$$\xymatrix{
 0\ar[r]& L\ar[r]^f\ar[d] &M\ar[r]^g\ar[d]^{p_M} &N\ar[r]\ar[d] &0 \\
 0\ar[r]& \im (p_Mf)\ar[r]^{f'}\ar[d] & M'\ar[r]^{g'}\ar[d] &\coker f'\ar[r]\ar[d] &0 \\
  & 0 & 0 & 0 & }$$
Let $(L',p_L)$ and $(N',p_N)$ be the maximally destabilizing quotients of $L$ and $N$ respectively.

If $\im (p_Mf)=0$, then there exists an epimorphism $h:N\rightarrow M'$, and it follows from the definition of $N'$ that $\phi(M')\geq\phi(N')\geq p$.
Else, it follows from the semistability of $M'$ that $\phi(\im (p_Mf))\leq \phi(M')$. Moreover, $\phi(\im(p_Mf))\geq\phi(L')\geq p$ since $L'$ is a maximally destabilizing quotient. Consequently $\phi(M')\geq p$ and $\mathcal{T}_P$ is closed under extensions.

To show that $\mathcal{T}_p$ is closed under quotients, suppose that $f:M\rightarrow N$ is an epimorphism with $M\in\mathcal{T}_p$. Let $(M',p_M)$ and $(N',p_N)$ be the maximally destabilizing quotients of $M$ and $N$ respectively. Then $p_Nf:M\rightarrow N'$ is an epimorphism and it follows from the definition of $M'$ that $\phi(N')\geq\phi(M')\geq p$. Hence $N \in \T_p$. 
This proves that $\T_p$ is a torsion class.

\textit{2. and 3.} Clearly, $\text{Filt}(\mathcal{A}_{\geq p})\subseteq \text{Filt}(\Fac\mathcal{A}_{\geq p})$. On the other hand, it follows from \cite[Proposition 3.3]{DIJ} that $\text{Filt}(\Fac\mathcal{A}_{\geq p})$ is the smallest torsion class containing $\mathcal{A}_{\geq p}$. As $\mathcal{A}_{\geq p}\subseteq\mathcal{T}_p$, we get $\text{Filt}(\Fac\mathcal{A}_{\geq p})\subseteq\mathcal{T}_p$. 

It thus remains to show that $\mathcal{T}_p\subseteq \text{Filt}(\mathcal{A}_{\geq p})$. Let $M\in\mathcal{T}_{p}$, and let $M'$ be a maximally destabilizing quotient of $M$.  By definition of $\mathcal{T}_p$, we have that $\phi(M')\geq p$. Therefore we can consider the Harder-Narasimhan filtration of $M$ and Theorem \ref{HN} implies that $M\in\text{Filt}(\mathcal{A}_{\geq p})$. Hence $\T_p\subseteq \text{Filt}(\mathcal{A}_{\geq p})\subseteq\text{Filt}(\Fac(\A_{\geq p}))\subseteq \T_p$.

\textit{4.} Let $M\in\mathcal{T}_p$, and suppose that $M'$ is its maximally destabilizing quotient.  By definition of the maximally destabilizing quotient, every quotient $N$ of $M$ is such that $\phi(N)\geq\phi(M')\geq p$.  Thus $\mathcal{T}_p\subseteq \{M\in\A \ : \ \phi(N)\geq p \text{ for every quotient $N$ of $M$}\}$.  The reverse inclusion is immediate.
\end{proof}

The following result is the dual statement for the torsion-free class $\mathcal{F}_p$.

\begin{prop}\label{eqdeftorsionfree}
Let $\phi: \A\ra \P$ be a stability function and consider the full subcategory $\F_p$ of $\A$ defined as $$\mathcal{F}_{p}=\{M\in Obj(\mathcal A) \ | \ \phi(M'')< p, \text{ where $M''$ is the maximally destabilizing subobject of }M\}.$$ Then:
\begin{enumerate}[label=(\alph*)]
\item $\F_p$ is a torsion free class;
\item $\mathcal{F}_{p}=\text{\emph{Filt}}(\mathcal{A}_{< p})$; %\textcolor{blue}{(We need to define $\mathcal{A}_{< p}$ somewhere...)}
\item $\F_p=\text{\emph{Filt}}(\text{\emph{Fac}}\mathcal{A}_{< p})$.
\item $\F_p=\{M\in\A \ : \ \phi(N)< p \text{ for every subobject $N$ of $M$}\}$.
\end{enumerate}
\end{prop}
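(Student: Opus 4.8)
The plan is to prove Proposition \ref{eqdeftorsionfree} by exactly dualizing the proof of Proposition \ref{eqdeftorsion}, replacing quotients by subobjects, epimorphisms by monomorphisms, maximally destabilizing quotients by maximally destabilizing subobjects, and the inequality $\phi(-)\geq p$ by $\phi(-)<p$. Since a subcategory $\F$ of $\A$ is a torsion-free class precisely when it is closed under subobjects and extensions, part (a) requires checking these two closure properties. For closure under subobjects, given a monomorphism $L\hookrightarrow M$ with $M\in\F_p$, one passes to the maximally destabilizing subobjects $(L'',i_L)$ of $L$ and $(M'',i_M)$ of $M$: the composite $L''\hookrightarrow L\hookrightarrow M$ is a monomorphism into $M$, so by the defining property of $M''$ one gets $\phi(L'')\leq\phi(M'')<p$, hence $L\in\F_p$. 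For closure under extensions, given $0\to L\to M\to N\to 0$ with $L,N\in\F_p$, one builds the dual of the commutative diagram in the proof of \ref{eqdeftorsion}: intersect the maximally destabilizing subobject $M''\hookrightarrow M$ with the image of $L$, obtaining a short exact sequence $0\to M''\cap L\to M''\to M''/(M''\cap L)\to 0$ where the last term embeds in $N$. If $M''\cap L=0$ then $M''$ embeds in $N$, so $\phi(M'')<p$ by definition of $N''$; otherwise $M''\cap L$ embeds in $L$ so $\phi(M''\cap L)<p$, and by the see-saw property applied to the above sequence together with the semistability of $M''$ (giving $\phi(M''/(M''\cap L))\geq\phi(M'')$) one concludes $\phi(M'')<p$. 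Hence $M\in\F_p$.

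For parts (b) and (c), the inclusion $\mathrm{Filt}(\A_{<p})\subseteq\mathrm{Filt}(\mathrm{Fac}\,\A_{<p})$ is trivial, and one must argue that $\mathrm{Filt}(\mathrm{Fac}\,\A_{<p})$ — or rather its dual-appropriate analogue — is contained in $\F_p$, and conversely that $\F_p\subseteq\mathrm{Filt}(\A_{<p})$. The second inclusion is the substantive one: for $M\in\F_p$, its maximally destabilizing subobject $M''$ satisfies $\phi(M'')<p$, so in the Harder–Narasimhan filtration of $M$ (Theorem \ref{HN}) every semistable factor $F_i$ has $\phi(F_i)\leq\phi(F_1)=\phi(M'')<p$, whence $F_i\in\A_{<p}$ and $M\in\mathrm{Filt}(\A_{<p})$. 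Here one small subtlety is that the authors' \ref{eqdeftorsion}(2)–(3) invoked \cite[Proposition 3.3]{DIJ} to identify $\mathrm{Filt}(\mathrm{Fac}\,\A_{\geq p})$ as the smallest torsion class containing $\A_{\geq p}$; the dual requires the dual statement, namely that $\mathrm{Filt}(\mathrm{Sub}\,\A_{<p})$ is the smallest torsion-free class containing $\A_{<p}$, which follows by applying \cite[Proposition 3.3]{DIJ} in the opposite category. One should be slightly careful that $\mathrm{Fac}$ in the statement of (c) is really what is meant, or whether $\mathrm{Sub}$ is intended; in either case the chain of inclusions $\F_p\subseteq\mathrm{Filt}(\A_{<p})\subseteq\mathrm{Filt}(\mathrm{Fac}\,\A_{<p})\subseteq\F_p$ (the last because $\F_p$, as a torsion-free class closed under subobjects and extensions, automatically contains $\mathrm{Fac}$-closures only up to care — this is the point needing attention) closes the loop.

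For part (d), given $M\in\F_p$ with maximally destabilizing subobject $M''$, any subobject $N\hookrightarrow M$ satisfies $\phi(N)\leq\phi(M'')<p$ by the defining maximality property of $M''$, giving $\F_p\subseteq\{M:\phi(N)<p\text{ for every subobject }N\text{ of }M\}$; the reverse inclusion is immediate since $M''$ is itself a subobject of $M$.

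The main obstacle I expect is the closure-under-extensions argument in (a), specifically getting the dual diagram right: the proof of \ref{eqdeftorsion} pushes the maximally destabilizing quotient $M'$ of $M$ forward and takes the image of $L$ inside it, whereas here one must pull the maximally destabilizing subobject $M''$ of $M$ back and intersect with $L$, which uses that $\A$ is abelian (so pullbacks/intersections of subobjects exist) — conceptually routine but requiring verification that the resulting sequence $0\to M''\cap L\to M''\to M''/(M''\cap L)\to 0$ has its right-hand term genuinely embedded in $N\cong M/L$, which is the snake-lemma-type bookkeeping one has to do carefully. A secondary point of care is the exact formulation of the $\mathrm{DIJ}$-dual used in (b)–(c); since the paper works with essentially small length categories (hence self-dual as a class), passing to $\A^{\mathrm{op}}$ is legitimate and no extra hypothesis is needed.
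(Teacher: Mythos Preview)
Your approach---dualize Proposition~\ref{eqdeftorsion}---is exactly what the paper intends; there is no separate proof given. Two points deserve comment.

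\textbf{The extension-closure step in (a).} Your diagram is the right dual, but the inequality you extract from it is the wrong one. From $\phi(M''\cap L)<p$ and the semistability of $M''$ (which gives $\phi(M''\cap L)\le\phi(M'')$ via see-saw) you cannot conclude $\phi(M'')<p$: in the strict see-saw case you only get $\phi(M''\cap L)<\phi(M'')$, which is useless. The correct dual of the paper's step ``$\phi(\operatorname{im}(p_Mf))\ge\phi(L')\ge p$'' uses the \emph{other} end of your short exact sequence: since $M''/(M''\cap L)$ embeds in $N$ (which you already observed), the defining property of the maximally destabilizing subobject $N''$ gives $\phi(M''/(M''\cap L))\le\phi(N'')<p$; combined with $\phi(M'')\le\phi(M''/(M''\cap L))$ from semistability of $M''$, this yields $\phi(M'')<p$. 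Your setup is fine, you just reached for the wrong inequality at the last moment.

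\textbf{The $\operatorname{Fac}$ in (c).} Your suspicion is well founded: as printed, (c) is false. Take $A=k(1\to 2)$ with a slope function making $P_1$ stable and choose $p$ with $\phi(P_1)<p\le\phi(S_1)$; then $P_1\in\A_{<p}$, its quotient $S_1$ lies in $\operatorname{Fac}\A_{<p}$, but $S_1\notin\F_p$. The honest dual of Proposition~\ref{eqdeftorsion}(c) replaces $\operatorname{Fac}$ by $\operatorname{Sub}$, and then your chain of inclusions
\[
\F_p\subseteq\operatorname{Filt}(\A_{<p})\subseteq\operatorname{Filt}(\operatorname{Sub}\A_{<p})\subseteq\F_p
\]
closes cleanly, the last inclusion holding because $\F_p$ is closed under subobjects and extensions (using the dual of \cite[Proposition~3.3]{DIJ} in $\A^{\mathrm{op}}$, as you note). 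So write the proof for $\operatorname{Sub}$ and flag the $\operatorname{Fac}$ as a typo.
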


Now were are able to  prove the main result of this section.

\begin{prop}\label{torsionpair}
Let $p\in\mathcal{P}$.  Then $(\mathcal{T}_p, \mathcal{F}_p)$ is a torsion pair in $\mathcal{A}$.
\end{prop}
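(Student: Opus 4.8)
The plan is to verify the two defining conditions of a torsion pair in the abelian category $\A$: first, that $\Hom_{\A}(T,F)=0$ for all $T\in\T_p$ and $F\in\F_p$; and second, that every object $M\in\A$ fits into a short exact sequence $0\to tM\to M\to M/tM\to 0$ with $tM\in\T_p$ and $M/tM\in\F_p$. Since Propositions \ref{eqdeftorsion} and \ref{eqdeftorsionfree} already establish that $\T_p$ is closed under quotients and extensions and that $\F_p$ is closed under subobjects and extensions, either of these two conditions together with these closure properties is enough to conclude; but it is cleanest to produce the canonical sequence directly from the Harder--Narasimhan filtration.

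First I would prove the orthogonality $\Hom_{\A}(T,F)=0$. Let $f:T\to F$ be nonzero with $T\in\T_p$, $F\in\F_p$. Then $\im f$ is a nonzero quotient of $T$, so by Proposition \ref{eqdeftorsion}(d) we have $\phi(\im f)\geq p$; but $\im f$ is also a nonzero subobject of $F$, so by Proposition \ref{eqdeftorsionfree}(d) we have $\phi(\im f)<p$, a contradiction. Hence $f=0$.

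Next I would construct the canonical sequence. Given $0\neq M\in\A$, take its Harder--Narasimhan filtration $0=M_0\subsetneq M_1\subsetneq\cdots\subsetneq M_n=M$ from Theorem \ref{HN}, with semi-stable quotients $F_i=M_i/M_{i-1}$ and strictly decreasing phases $\phi(F_1)>\cdots>\phi(F_n)$. Let $j$ be the largest index with $\phi(F_j)\geq p$ (set $j=0$ if no such index exists), put $tM=M_j$, and consider $0\to tM\to M\to M/tM\to 0$. On one hand $M_j$ is filtered by $F_1,\dots,F_j$, all of which lie in $\A_{\geq p}$, so $tM\in\Filt(\A_{\geq p})=\T_p$ by Proposition \ref{eqdeftorsion}(b). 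On the other hand $M/M_j$ inherits the filtration with quotients $F_{j+1},\dots,F_n$, all semi-stable of phase $<p$, hence lying in $\A_{<p}$, so $M/tM\in\Filt(\A_{<p})=\F_p$ by Proposition \ref{eqdeftorsionfree}(b). The cases $M=0$, $j=0$ and $j=n$ are handled by the convention that the zero object lies in both classes.

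The only genuine subtlety — and the step I expect to require the most care — is the claim that the Harder--Narasimhan subquotient $M/M_j$ is itself filtered by $F_{j+1},\dots,F_n$ as objects of $\A$, i.e.\ that passing to the quotient of a filtered object preserves the relevant filtration. This is immediate from the third isomorphism theorem: the images $M_i/M_j$ for $i\geq j$ give a filtration of $M/M_j$ whose successive quotients are $(M_i/M_j)/(M_{i-1}/M_j)\cong M_i/M_{i-1}=F_i$. Everything else is bookkeeping, and one could alternatively skip the canonical sequence entirely and simply invoke the closure properties together with the orthogonality, since a pair of subcategories $(\T,\F)$ with $\T$ closed under quotients and extensions, $\F$ closed under subobjects and extensions, $\Hom(\T,\F)=0$, and such that every object admits such a sequence, is exactly a torsion pair; but exhibiting the sequence via Theorem \ref{HN} makes the argument self-contained.
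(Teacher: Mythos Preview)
Your proof is correct. The orthogonality argument is essentially the same as the paper's, just phrased through the equivalent descriptions in Propositions~\ref{eqdeftorsion}(d) and~\ref{eqdeftorsionfree}(d) rather than directly via maximally destabilizing objects. The genuine difference lies in the second step: the paper verifies the \emph{maximality} condition (showing that $\Hom_\A(\T_p,N)=0$ forces $N\in\F_p$, and dually), whereas you instead construct the canonical short exact sequence explicitly by cutting the Harder--Narasimhan filtration at the largest index $j$ with $\phi(F_j)\geq p$. Both are standard criteria for a torsion pair. Your route has the advantage of identifying the torsion part of $M$ concretely as the Harder--Narasimhan subobject $M_j$, which is a useful piece of information in its own right; the paper's route is slightly shorter and avoids having to check that the quotient $M/M_j$ inherits the filtration (though, as you note, this is immediate from the third isomorphism theorem).
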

\begin{proof}
We first show that $\Hom_{\mathcal{A}}(\mathcal{T}_p, \mathcal{F}_p)=0$.  Suppose that $f\in\Hom_{\A}(M,N)$, where $M\in\T_p$ and $N\in\F_p$.  Let $M'$ be the maximally destabilizing quotient of $M$ and $N'$ be the maximally destabilizing subobject of $N$.  Then $\im f$ is a quotient of $M$ and a subobject of $N$. So, if $f\neq 0$, it follows from the definitions of $M'$ and $N'$ that $\phi(\im f)\geq \phi(M')\geq p$ and $\phi(\im f)\leq\phi(N')<p$, a contradiction.  Thus $f=0$ and $\Hom_{\mathcal{A}}(\mathcal{T}_p, \mathcal{F}_p)=0$.

For the maximality, suppose for instance that $\Hom_\A(\T_p, N)=0$. If $N'$ is the maximally destabilizing subobject of $N$, it follows that $\Hom_\A(\T_p, N')=0$, and thus $\phi(N')<p$ by definition of $\T_p$. Consequently, $N\in\F_p$.  We show in the same way that $\Hom_\A(M, \F_p)=0$ implies $M\in\T_p$, showing the maximality.
\end{proof}

%%%%%%%%%%%%%%%%%%%%%%%%%%%%%%%%%%%%%%
%%%%%%%%%%%%%%%%%%%%%%%%%%%%%%%%%%%%%%
\section{Maximal green sequences in abelian length categories}
%%%%%%%%%%%%%%%%%%%%%%%%%%%%%%%%%%%%%%
%%%%%%%%%%%%%%%%%%%%%%%%%%%%%%%%%%%%%%

In the previous section we discussed how a stability function $\phi: \A \to \P$ induces a torsion pair $(\T_p,\F_p)$ in $\A$ for each phase $p\in\P$. Moreover, it is easy to see that if $p\leq q$ in $\mathcal{P}$, then $\T_p\supseteq\T_q$ and $\F_p\subseteq\F_q$.  Since $\P$ is totally ordered, every stability function  $\phi$ yields thus a (possibly infinite) chain of torsion classes in $\A$.
In this section we are mainly interested in the different torsion classes induced by $\phi$. We therefore define, for a fixed stability function $\phi: \A \to \P$, an equivalence relation on $\mathcal P$ by $p \sim q$ when $\mathcal{T}_p =\mathcal{T}_q$ and consider the equivalence classes $\P/\sim$.  

Of particular importance is the case where the chain of equivalence classes $\P/\sim$ is finite, not refinable, and represented by elements $p_0 > \ldots > p_m \in \P$  such that $\T_{p_0} = \{0\}$ and $\T_{p_m} = \A$:

\begin{defi}
A \textit{maximal green sequence} in $\A$ is a finite sequence of torsion classes $0=\X_0\subsetneq \X_1 \subsetneq \dots \subsetneq \X_{n-1}\subsetneq \X_n=\A$ such that for all $i\in\{1, 2, \dots, n\}$, the existence of a torsion class $\X$ satisfying $\mathcal{X}_i\subseteq\mathcal{X}\subseteq\mathcal{X}_{i+1}$ implies $\mathcal{X}=\mathcal{X}_i$ or $\mathcal{X}=\mathcal{X}_{i+1}$.
\end{defi}

Our aim is to establish conditions when the chain of torsion classes induced by a stability function is a maximal green sequence.

Observe first that if $\phi: \A \to \P$ is a stability function and the totally ordered set $\P$ has a maximal element $\overline{p}$, then $\T_{\overline{p}}$ is the minimal element in the chain of torsion classes induced by $\phi$.  The following lemma determines when $\T_{\overline{p}}=\{0\}$.

\begin{lem}\label{supremum}
Let $\phi: \A \to \P$ be a stability function.  
\begin{enumerate}[label=(\alph*)]
\item If $\P$ has a maximal element $\overline{p}$, then $\T_{\overline{p}}\neq \{0\}$ if and only if $\overline{p}\in\phi(\A)$.
\item If the set of equivalence classes $\P/\sim$ is finite and the maximal object of $\P$ does not belong to the image of $\phi$, then there exists some $p\in \P$ such that $\T_p=\{0\}$.
\end{enumerate} 
\end{lem}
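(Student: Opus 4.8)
The plan is to prove both parts by unwinding the definition of $\T_p$ via Proposition~\ref{eqdeftorsion}, in particular the characterization $\T_p=\{M\in\A \ :\ \phi(N)\geq p \text{ for every quotient }N\text{ of }M\}$, together with the characterization $\T_p=\mathrm{Filt}(\A_{\geq p})$.

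For part (a), first suppose $\overline{p}\in\phi(\A)$, say $\phi(X)=\overline{p}$ for some $0\neq X\in\A$. Since $\overline{p}$ is the maximal element of $\P$, every quotient $N$ of $X$ satisfies $\phi(N)\leq\overline{p}$, but also $\phi(N)=\overline{p}$ is forced: indeed one checks via the see-saw property that if some proper quotient had phase strictly less than $\overline{p}$ then some subobject of $X$ would have phase strictly greater than $\overline{p}$, which is impossible. Hence $\phi(N)\geq\overline{p}$ for all quotients $N$ of $X$, so $X\in\T_{\overline{p}}$ and $\T_{\overline{p}}\neq\{0\}$. Conversely, suppose $\T_{\overline{p}}\neq\{0\}$, and pick $0\neq M\in\T_{\overline{p}}$. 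Taking $N=M$ as a quotient of itself gives $\phi(M)\geq\overline{p}$, and since $\overline{p}$ is maximal in $\P$ this forces $\phi(M)=\overline{p}$, so $\overline{p}\in\phi(\A)$.

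For part (b), the hypothesis that the maximal object $\overline{p}$ of $\P$ is not in the image of $\phi$ gives, by part (a), that $\T_{\overline{p}}=\{0\}$; so there is \emph{some} $p$ (namely $p=\overline{p}$) with $\T_p=\{0\}$, which already settles it if $\P$ has a maximal element. If $\P$ need not have a maximal element, I would instead argue as follows: since $\P/\!\sim$ is finite, the chain of torsion classes $\{\T_p : p\in\P\}$ is finite; let $\T_{min}$ be its minimal member, attained at some $p_0\in\P$. If $\T_{min}\neq\{0\}$, pick $0\neq M\in\T_{min}$; then $M$ has a maximally destabilizing quotient $M'$ with $\phi(M')\geq p_0$, and since $\phi(M')$ lies in the image of $\phi$ while $\overline{p}$ does not, we have $\phi(M')<\overline{p}$ strictly; choosing any $q\in\P$ with $\phi(M')<q$ (possible precisely because $\overline{p}\notin\phi(\A)$ means $\phi(M')$ is not maximal, wait—one needs an element of $\P$ strictly above $\phi(M')$, which exists since $\phi(M')$ is not the top; but $\P$ may not be a lattice, so more carefully: pick $q$ strictly between $\phi(M')$ and $\overline{p}$ if one exists, otherwise use that $\overline{p}$ itself works) yields $M\notin\T_q\subsetneq\T_{min}$, contradicting minimality. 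Hence $\T_{min}=\{0\}$.

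\textbf{Main obstacle.} The delicate point is part (b) when $\P$ lacks a maximal element: one must produce a phase $q$ that ``separates'' $\phi(M')$ from the top without assuming any order-completeness of $\P$. The cleanest route is probably to reduce to part (a) by observing that finiteness of $\P/\!\sim$ means only finitely many phases $p_1>\dots>p_k$ in $\phi(\A)$ matter for the torsion classes, so one may harmlessly replace $\P$ by a finite totally ordered set (with a top) refining these, and then apply part (a). I would make sure the see-saw argument in part (a) — that an object of maximal phase has all quotients of maximal phase — is stated carefully, as it is the one genuinely non-formal step.
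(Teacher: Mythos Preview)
Your proof is correct and follows essentially the same route as the paper: for (a) the paper argues via subobjects (any $M$ with $\phi(M)=\overline{p}$ is automatically semi-stable, hence lies in $\A_{\overline{p}}\subset\T_{\overline{p}}$) rather than via quotients and see-saw, and for (b) it runs precisely your minimality-plus-maximally-destabilizing-quotient argument. Your hesitation in (b) about producing $q>\phi(M')$ is unwarranted---the hypothesis says exactly that $\phi(M')$ is not the maximum of $\P$, so some $q>\phi(M')$ exists (if $\P$ has a maximum $\overline{p}$, take $q=\overline{p}$), and no lattice structure or reduction to a finite $\P$ is needed.
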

\begin{proof}
(a) Suppose that $\overline{p}$ is a maximal element in $\P$. 
If $\T_{\overline{p}}\neq \{0\}$, then there exists a nonzero object $M$ in $\T_{\overline{p}}$. If $M'$ is the maximally destabilizing quotient of $M$, we know that $\phi(M')\geq \overline{p}$. Since $\overline{p}$ is the maximal element of $\P$, we have   $\phi(M')=\overline{p}$ and thus $\overline{p}\in \phi(\A)$.

Conversely, if $\phi(M)=\overline{p}$, then it follows from the maximality of $\overline{p}$ that $\phi(L)\leq\phi(M)=\overline{p}$ for every nontrivial subobject $L$ of $M$. Thus $M$ is a semi-stable object, whence $M\in\A_{\overline{p}}\subset\T_{\overline{p}}$.

(b) By assumption, the chain of torsion classes induced by $\phi$ is finite, say $$
\T_{p_0}\subsetneq \T_{p_1} \subsetneq \cdots \subsetneq \T_{p_n}.
$$

If $\T_{p_0}\neq\{0\}$, choose a nonzero object $M$ in $\T_{p_0}$.  Let $M'$ be the maximally destabilizing quotient of $M$, thus $M'\in\T_{p_0}$ and $\phi(M')\geq p_0$. Since the maximal object of $\P$ does not belong to the image of $\phi$, there exists a $p\in \P$ with $p>\phi(M')$.  It follows that $M'\notin \T_{p}$, while $\T_p\subseteq \T_{p_0}$, contradicting the minimality of $\T_{p_0}$.  Thus $\T_{p_0}=\{0\}$.  
\end{proof}

Following Engenhorst \cite{E14}, we call a stability function $\phi : \A \rightarrow \mathcal{P}$ \textit{discrete at $p$} if two stable objects $M_1, M_2$ satisfy $\phi(M_1)=\phi(M_2)=p$ precisely when $M_1$ and $M_2$ are isomorphic in $\A$. Moreover, we say that $\phi$ is \textit{discrete} if $\phi$ is discrete at $p$ for every $p\in\P$. 

\begin{prop}\label{mutation}
Let $\phi: \A \to \P$ be a stability function, and let $p,q\in\P$ such that $\T_p\subsetneq\T_q$.  Then the  following statements are equivalent:
\begin{enumerate}[label=(\alph*)]
\item There is no $r\in\P$ such that $\T_p\subsetneq\T_r\subsetneq\T_q$, and $\phi$ is discrete at every $q'$ with $q'\sim q$.
\item There is no torsion class $\T$ such that $\T_p\subsetneq\T\subsetneq\T_q$. 
\end{enumerate}
\end{prop}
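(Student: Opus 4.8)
The plan is to prove both implications directly, exploiting the Harder--Narasimhan and Jordan--Hölder machinery recalled in Theorems~\ref{HN} and~\ref{JH}, together with the characterization of $\T_p$ as $\text{Filt}(\mathcal{A}_{\geq p})$ from Proposition~\ref{eqdeftorsion}.

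First I would prove that (b) implies (a). Suppose (a) fails. If there is some $r\in\P$ with $\T_p\subsetneq\T_r\subsetneq\T_q$, then $\T_r$ itself is a torsion class strictly between $\T_p$ and $\T_q$, contradicting (b). So the remaining case is that $\phi$ is not discrete at some $q'\sim q$; that is, there are two non-isomorphic stable objects $M_1,M_2$ with $\phi(M_1)=\phi(M_2)=q'$. The idea is to use $M_1$ to manufacture a torsion class strictly between $\T_p$ and $\T_q$. Since $M_1\in\mathcal{A}_{q'}=\mathcal{A}_{\geq q'}\cap\mathcal{A}_{\leq q'}$ and $q'\sim q$ means $\T_{q'}=\T_q$, we have $M_1\in\T_q$ but, because $\phi(M_1)=q'$ and $\T_p\subsetneq\T_{q'}=\T_q$ forces $p<q'$ (using monotonicity $\T_p\supseteq\T_{q'}$ when $p\le q'$ and strictness), we get $M_1\notin\T_p$. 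Now let $\T$ be the smallest torsion class containing $\T_p$ together with $M_1$, i.e.\ $\T=\text{Filt}(\Fac(\T_p\cup\{M_1\}))$. Clearly $\T_p\subsetneq\T\subseteq\T_q$. The point is that $\T\neq\T_q$ because $M_2\notin\T$: any object of $\T$ is filtered by quotients of objects in $\T_p$ and by $\Fac M_1$, and a stable object $M_2$ with $\phi(M_2)=q'$ that lies in such a filtration would, via the Jordan--Hölder property of $\mathcal{A}_{q'}$ (Theorem~\ref{JH}) and the Hom-vanishing of Corollary~\ref{noniso}, have to appear as a composition factor of some $\Fac M_1$-piece or of a $\T_p$-quotient --- and $\Fac M_1$ consists of quotients of the simple object $M_1$ of $\mathcal{A}_{q'}$, so its only such factor is $M_1\not\cong M_2$, while quotients of objects in $\T_p$ cannot have $M_2$ as a destabilizing-subobject-phase-$q'$ factor without lying above $\T_p$. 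Hence $M_2\in\T_q\setminus\T$, so $\T$ is strictly between, contradicting (b).

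Next I would prove (a) implies (b). Assume (a) and suppose for contradiction there is a torsion class $\T$ with $\T_p\subsetneq\T\subsetneq\T_q$. Pick $M\in\T\setminus\T_p$ and $M\in\T$; since $M\notin\T_p$, by Proposition~\ref{eqdeftorsion}(d) there is a quotient $N$ of $M$ with $\phi(N)<p$, but more usefully, passing to the Harder--Narasimhan filtration of $M$ and using $\T_p=\text{Filt}(\mathcal{A}_{\geq p})$, the maximally destabilizing quotient $M'$ of $M$ satisfies $\phi(M')\geq p$ is false --- so actually I should instead pick $M\in\T_q\setminus\T$ and work with the smallest torsion class $\T'=\text{Filt}(\Fac(\T\cup\{M\}))$, with $\T\subsetneq\T'\subseteq\T_q$; iterating, by finiteness of $\P/\sim$ one reduces to the case where $\T$ is maximal among torsion classes strictly below $\T_q$, and then any $M\in\T_q\setminus\T$ generates all of $\T_q$ over $\T$. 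The HN-filtration of such an $M$ has semistable subquotients $F_i$ with $\phi(F_1)>\dots>\phi(F_n)$ and $\phi(F_1)=\phi(M')\geq$ the phase threshold of $\T_q$; the subquotients $F_i$ with phase $\geq q$ lie in $\mathcal{A}_{\geq q}\subseteq\T_q$ and, being generated over $\T$, one of them, a semistable object of some phase $q'\sim q$, must not lie in $\T$; refining it by Theorem~\ref{JH} into stable factors of phase $q'$, one such stable factor $S$ lies outside $\T$. Using discreteness of $\phi$ at $q'$, the set of phase-$q'$ stable objects outside $\T$ is then a singleton $\{S\}$ (else two of them would generate incomparable intermediate torsion classes), and one checks $\T\cup\{S\}$ generates exactly $\T_q$ while $\text{Filt}(\Fac(\T\cup\{S'\}))$ for the appropriate configuration exhibits the absence of an intermediate $\T_r$ --- contradicting that (a) asserts no $r$ with $\T_p\subsetneq\T_r\subsetneq\T_q$ once we identify $\T$ as $\T_r$ for a suitable $r$. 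More cleanly: the finiteness of $\P/\sim$ means every torsion class strictly between $\T_p$ and $\T_q$ of the form $\T_r$ is excluded by (a), so $\T$ is not of the form $\T_r$; but the generated torsion class argument above shows $\T$ \emph{is} realized as $\T_{r}$ for $r$ slightly above $q'$, contradiction.

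\textbf{Main obstacle.} The delicate step is the second implication: showing that an abstract torsion class $\T$ sitting strictly between $\T_p$ and $\T_q$ is forced to coincide with some $\T_r$ in the $\phi$-induced chain, or else forces a failure of discreteness. This requires carefully combining the HN-filtration (to locate where $\T$ "cuts" the phase spectrum), the Jordan--Hölder theorem in $\mathcal{A}_{q'}$ (to reduce to stable objects), and the Hom-orthogonality Corollary~\ref{noniso} (to show distinct phase-$q'$ stables generate independent pieces of torsion classes), all under the finiteness hypothesis on $\P/\sim$. Getting the bookkeeping right so that "strictly between" genuinely produces either an intermediate $\T_r$ or two non-isomorphic phase-$q$ stables is where the real work lies.
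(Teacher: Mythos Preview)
Your (b)$\Rightarrow$(a) is essentially the paper's argument, with one simplification: by taking $\T=\text{Filt}(\Fac(\T_p\cup\{M_1\}))$ rather than the paper's $\text{Filt}(\A_{\geq p}\cup\{M_1\})$, you obtain a torsion class automatically and bypass the paper's lengthy verification of closure under quotients. Your justification that $M_2\notin\T$ is more tangled than necessary, however. Since $\T_p\subsetneq\T_{q'}$ forces $q'<p$, the stable object $M_2$ lies in $\F_p$, so $\Hom(\T_p,M_2)=0$; combined with $\Hom(M_1,M_2)=0$ from Corollary~\ref{noniso}, and the fact that the condition $\Hom(-,M_2)=0$ passes to quotients and to extensions, you get $\Hom(\T,M_2)=0$ directly, hence $M_2\notin\T$.

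Your (a)$\Rightarrow$(b) has a genuine gap. You correctly begin with $M\in\T\setminus\T_p$ and its maximally destabilizing quotient $M'$, observe that $\phi(M')<p$, and then abandon this line, switching to $M\in\T_q\setminus\T$, an iterative reduction, and an appeal to ``finiteness of $\P/\sim$''. But finiteness of $\P/\sim$ is \emph{not} a hypothesis of this proposition --- it only enters later in Theorem~\ref{maximalgreensequences} --- so that step is unjustified, and the remainder of your sketch never reaches a concrete contradiction or identifies $\T$ with any $\T_r$.

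The approach you abandoned is exactly what works. Since $M\in\T\subseteq\T_q$ one has $\phi(M')\geq q$, while $M\notin\T_p$ gives $\phi(M')<p$; thus $M'\in\T_{\phi(M')}\setminus\T_p$, yielding $\T_p\subsetneq\T_{\phi(M')}\subseteq\T_q$, and hypothesis (a) forces $\T_{\phi(M')}=\T_q$. So $\phi(M')\sim q$, and discreteness together with Theorem~\ref{JH} shows $M'$ is filtered by a \emph{single} stable object $M''$ of phase $\phi(M')$; in particular $M''$ is a quotient of $M$, hence $M''\in\T$. The missing key step is that every stable $X$ with $\phi(X)\geq\phi(M'')$ lies in $\T$: if $\phi(X)=\phi(M'')$ then $X\cong M''$ by discreteness, and if $\phi(X)>\phi(M'')$ then $M''\notin\T_{\phi(X)}$, so $\T_{\phi(X)}\subsetneq\T_q$ and (a) forces $\T_{\phi(X)}\subseteq\T_p\subseteq\T$. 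Via Theorem~\ref{JH} and closure of $\T$ under extensions this gives $\A_{\geq\phi(M'')}\subseteq\T$, and Proposition~\ref{eqdeftorsion}(b) then yields $\T_q=\T_{\phi(M'')}=\text{Filt}(\A_{\geq\phi(M'')})\subseteq\T$, i.e.\ $\T=\T_q$.
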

\begin{proof}
(a) implies (b): Suppose that $\T$ is a torsion class such that $\T_{p}\subsetneq\T\subseteq \T_{q}$. Then there exists an object $M\in\T\setminus\T_{p}$. Let $M'$ be the maximally destabilizing quotient of $M$. 
Then $\phi(M')\geq q$ because $M\in \mathcal{T}\subsetneq\mathcal{T}_{q}$.  Consequently, $\T_{\phi(M')}\subseteq\T_q$.  On the other hand, $\phi(M')<p$ because $M\not\in \mathcal{T}_{p}$. Moreover, since $M'$ is semi-stable by Theorem~\ref{HN}, $M'\in\T_ {\phi(M')}\setminus\T_p$.  Consequently, $\T_p\subsetneq \T_{\phi(M')}\subseteq \T_q$.  It thus follows from our assumption that $\T_{\phi(M')}=\T_q$. 

Now Theorem~\ref{JH} implies the existence of a stable object $M''$ such that $\phi(M'')=\phi(M')$, which is unique since $\phi$ is discrete.  Using Theorem~\ref{JH} again, $M'$ can be filtered by $M''$. In particular $M''$ is a quotient of $M$, and thus $M''\in\T$. 

Consider a stable object $X$ in $\A_{\geq \phi(M'')}$. In particular $X\in\T_{\phi(M'')}=\T_q$.  If $\phi(X)=\phi(M'')$, then $X$ is isomorphic to $M''$ by the discreteness, and $X\in \T$.  Else $\phi(X)>\phi(M'')$, and $M''\in\T_{\phi(M'')}\setminus\T_{\phi(X)}$. Therefore, $\T_{\phi(X)}\subsetneq \T_{\phi(M'')}=\T_q$, which implies by assumption, that $\T_{\phi(X)}\subseteq \T_p\subseteq \T$. In particular, $X\in\T$.  Since $\T$ is a torsion class, this implies that $\A_{\geq \phi(M'')}\subseteq \T$, and furthermore $$\T_q=\T_{\phi(M'')}=\text{Filt}(\A_{\geq\phi(M'')})\subseteq \T.$$  This shows $\T_q=\T$.

(b) implies (a): The fact that there is no $r\in\P$ such that $\T_p\subsetneq\T_r\subsetneq\T_q$ is immediate.  To show that $\phi$ is discrete, assume that there exist two nonisomorphic stable objects $M$ and $N$ such that $\phi(M)=\phi(N)=q'$, with $q'\sim q$. Consider the set $\T=\text{Filt}(\A_{\geq p}\cup \{N\})$. We will show that $\T$ is a torsion class such that $\T_p\subsetneq \T\subsetneq \T_q$, a contradiction to our hypothesis.

First, because $\T_p\subsetneq\T_q=\T_{q'}$, we have $q<p$. Since $\T_p=\text{Filt}(\A_{\geq p})$, we have $N\notin \T_p$, so $\T_p\subsetneq \T$.  Furthermore, it follows from Theorem\ref{homzero} and Corollary~\ref{noniso} that $\Hom_{\A}(\T,M)=0$, implying $M\notin \text{Filt}(\A_{\geq p}\cup \{N\})$.  Since $M\in\T_q$, this shows $\T\subsetneq\T_q$.  Thus $\T_p\subsetneq\T\subsetneq\T_q$.    

We now show that $\T=\text{Filt}(\A_{\geq p}\cup \{N\})$ is a torsion class, that is, $\T$ is closed under extensions and quotients. By definition, $\T$ is closed under extensions.  To show that $\T$ is closed under quotients, suppose that $$T\rightarrow T'\rightarrow 0$$ is an exact sequence in $\A$ and $T\in\T$.  

If $T\in\T_p$, then $T'\in\T_p$ since $\T_p$ is a torsion class and therefore $T'\in\T$.

Else, $T\in\T\setminus\T_p$.  Let $Q$ be the maximally destabilizing quotient of $T$. Since $\T\notin\T_p$, we have $\phi(Q)<p$.  Moreover, $\phi(Q)\geq q$ since $T\in\T\subsetneq\T_q$.  Consequently, $q\leq\phi(Q)<p$, and it follows from our hypothesis that $\phi(Q)=q$ (otherwise $\T_p\subsetneq\T_{\phi(Q)}\subsetneq \T_q$).  So $Q\in\T_q=\T_{q'}$. This shows in particular that $q=q'$.  Indeed, if $q<q'$, then the fact that $Q$ is semi-stable leads to $Q\in\T_q\notin\T_{q'}$, a contradiction.  Similarly, if $q'<q$, then $N\in\T_{q'}\notin\T_q$, again a contradiction.  So $q=q'$, and consequently $Q,N\in\A_q$.

Now, suppose that
$$
0=T_0\subsetneq T_1\subsetneq T_2\subsetneq \cdots \subsetneq T_{n-1}\subsetneq T_n=T
$$
is a Harder-Narasimhan filtration of $T$, as in Theorem~\ref{HN}.  In particular, $Q\cong T/T_{n-1}$ and $$q=\phi(Q)<\phi(T_{n-1}/T_{n-2}) <\cdots< \phi(T_{2}/T_{1})<\phi(T_1/T_0).$$  Consequently, $T_i/T_{i-1}\in\A_p$ for all $i\leq n-1$, while $\phi(Q)=q$.  It thus follows from the fact that $T\in\text{Filt}(\A_p\cup\{N\})\setminus\T_p$ that $Q\in\text{Filt}(\{N\})$. In particular, $Q\in\T$.

Now, let $Q'$ be the maximally destabilizing quotient of $T'$.  Since $Q$ is the maximally destabilizing quotient of $T$, we have $\phi(Q')\geq \phi(Q)$.  If $\phi(Q')>\phi(Q)$, then $\phi(Q')\geq p$, and $T'\in\T_p\subsetneq \T$.  Else, $\phi(Q')=\phi(Q)$, and it follows from the fact that $Q$ is the maximally destabilizing quotient of $T$ that the epimorphism from $T$ to $Q'$ factors through $Q$, and thus there exists an epimorphism $f:Q\rightarrow Q'$ in $\A$, and thus in $\A_q$.

  Recall from  Proposition~\ref{fix-slope} that $\A_q$ is an abelian category whose stable objects coincide with the simple objects by Remark~\ref{simplestable}.  Consequently, it follows from the existence of the epimorphism $f:Q\rightarrow Q'$ and the fact that $Q$ is filtered by the stable object $N$ that $Q'\in\text{Filt}(\{N\})$.  

Let
$$
0=T'_0\subsetneq T'_1\subsetneq T'_2\subsetneq \cdots \subsetneq T'_{m-1}\subsetneq T'_m=T'
$$
be the Harder-Narasimhan filtration of $T'$. Then $Q'\cong T'/T'_{n-1}$ and $$q=\phi(Q')<\phi(T'_{n-1}/T'_{n-2}) <\cdots< \phi(T'_{2}/T'_{1})<\phi(T'_1/T'_0).$$
 Consequently, $T'_i/T'_{i-1}\in\A_p$. Since $Q'$ is filtered by $N$, this implies that $T'\in\text{Filt}(\A_p\cup\{N\})=\T$. This finishes the proof.
\end{proof}

We are now able to characterize the stability functions inducing maximal green sequences in $\A$.

\begin{theorem}\label{maximalgreensequences}
Let $\phi:\A\ra \P$ be a stability function. Suppose that $\P$ has no maximal element, or that the maximal element of $\P$ is not in $\phi(\A)$. Then $\phi$ induces a maximal green sequence if and only if $\phi$ is a discrete stability function inducing finitely many equivalent classes on $\P/\sim$.
\end{theorem}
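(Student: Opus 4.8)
The plan is to reduce both directions of the theorem to Proposition~\ref{mutation}, supplemented by Lemma~\ref{supremum}, and to treat the remaining content as bookkeeping about the chain of torsion classes. Write $\mathcal{C}=\{\T_p:p\in\P\}$ for the set of distinct torsion classes induced by $\phi$; it is totally ordered by inclusion, since $\P$ is totally ordered and $p\leq q$ forces $\T_p\supseteq\T_q$. Saying that \emph{$\phi$ induces a maximal green sequence} means exactly that $\mathcal{C}$, with this order, is a finite, non-refinable chain running from $\{0\}$ to $\A$; in particular $\P/\sim$ is finite precisely when $\mathcal{C}$ is finite.

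First I would identify the two endpoints of $\mathcal{C}$ whenever it is finite. The top of $\mathcal{C}$ is always $\A$: for $M\in\A$ with maximally destabilizing quotient $M'$ (Theorem~\ref{HN}), every quotient $N$ of $M$ satisfies $\phi(N)\geq\phi(M')$, hence $M\in\T_{\phi(M')}$ by the definition of $\T_p$, and $\T_{\phi(M')}$ is contained in the largest element of $\mathcal{C}$; so $\bigcup\mathcal{C}=\A$. The bottom of $\mathcal{C}$ is $\{0\}$: this is Lemma~\ref{supremum}(b) when $\P$ has a maximal element outside $\phi(\A)$, and the \emph{same} proof works when $\P$ has no maximal element, since the only property of $\P$ it uses is that each value $\phi(M')\in\phi(\A)$ admits some $p\in\P$ with $p>\phi(M')$. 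Thus, once $\mathcal{C}$ is finite, it takes the form $\{0\}=\T_{p_0}\subsetneq\T_{p_1}\subsetneq\cdots\subsetneq\T_{p_n}=\A$ with $p_0>p_1>\cdots>p_n$ in $\P$.

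For the implication from right to left, assume $\phi$ is discrete and $\mathcal{C}$ is finite, so $\mathcal{C}$ has the displayed form. For each $i\in\{1,\dots,n\}$ I would apply Proposition~\ref{mutation} to the pair $\T_{p_{i-1}}\subsetneq\T_{p_i}$: condition~(a) there holds because $\T_{p_{i-1}}$ and $\T_{p_i}$ are consecutive in $\mathcal{C}$ (so no $\T_r$ lies strictly between them) and $\phi$ is discrete at every phase; hence condition~(b) holds, i.e.\ no torsion class at all lies strictly between $\T_{p_{i-1}}$ and $\T_{p_i}$, and $\mathcal{C}$ is a maximal green sequence. For the converse, assume $\mathcal{C}$ is a maximal green sequence; then $\P/\sim$ is finite by definition, and it remains to prove discreteness. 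Fix $p\in\P$. If $\T_p\ne\{0\}$, then $\T_p=\T_{p_i}$ for some $i\geq1$, and non-refinability of $\mathcal{C}$ means no torsion class lies strictly between $\T_{p_{i-1}}$ and $\T_{p_i}$; Proposition~\ref{mutation} in the direction (b)$\Rightarrow$(a) then gives that $\phi$ is discrete at every $q'\sim p_i$, in particular at $p$. If $\T_p=\{0\}$, then a stable object of phase $p$ would be semi-stable of phase $p$, hence would lie in $\mathcal{A}_p\subseteq\mathcal{A}_{\geq p}\subseteq\T_p=\{0\}$, which is impossible; so there is no stable object of phase $p$ and $\phi$ is vacuously discrete at $p$. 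Hence $\phi$ is discrete.

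I expect the substance to lie entirely in Proposition~\ref{mutation}, with the main delicate points being structural rather than computational: that the top of $\mathcal{C}$ is recognised as $\A$ via the maximally destabilizing quotient; that the hypothesis on $\P$ is exactly what forces the bottom of $\mathcal{C}$ to be $\{0\}$ (and that the case of a $\P$ without maximal element has to be read off from the proof of Lemma~\ref{supremum}(b) rather than its statement); and the observation that discreteness at a phase $p$ with $\T_p=\{0\}$ is automatic, since no stable object can have such a phase. Indeed, the hypothesis rules out precisely the degenerate situation of Lemma~\ref{supremum}(a), in which $\T_{\overline p}\ne\{0\}$ and no maximal green sequence can start from the zero class.
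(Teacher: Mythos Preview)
Your proof is correct and follows essentially the same route as the paper's: both directions reduce to Proposition~\ref{mutation}, with Lemma~\ref{supremum} securing the bottom endpoint $\{0\}$ and the maximally destabilizing quotient argument securing the top endpoint $\A$. Your treatment is in fact slightly more careful than the paper's, explicitly handling discreteness at phases $p$ with $\T_p=\{0\}$ and noting that the proof of Lemma~\ref{supremum}(b) covers the case where $\P$ has no maximal element.
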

\begin{proof}
Suppose that $\phi$ induces a maximal green sequence, say 
$$
\{0\}=\T_{p_0}\subsetneq \T_{p_1} \subsetneq \cdots \subsetneq \T_{p_n}=\A.
$$
In particular, there are only finitely many equivalence classes in $\P/\sim$.
Moreover, it follows from Proposition~\ref{mutation} that $\phi$ is discrete.

  Conversely, suppose that $\phi$ is a discrete stability function inducing finitely many equivalent classes on $\P/\sim$.  So we get a (complete) chain of torsion classes
$$
\T_{p_0}\subsetneq \T_{p_1} \subsetneq \cdots \subsetneq \T_{p_n}.
$$
induced by $\phi$.
 The discreteness of $\phi$ implies by Proposition~\ref{mutation} that this chain of torsion classes is maximal. Moreover, it follows from Lemma~\ref{supremum} that $\T_{p_0}=\{0\}$. It remains to show that $\T_{p_n}=\mod A$. If $M\in\mod A$ but $M\notin\T_{p_n}$, then the maximally destabilizing quotient $M'$ of $M$ satisfies $\phi(M')<p_n$. Since $M'\in \T_{\phi(M')}$, it follows that $\T_{p_n}\subsetneq \T_{\phi(M')}$, a contradiction to the maximality of $\T_{p_n}$.  So $\T_{p_n}=\mod A$. 
\end{proof}

%%%%%%%%%%%%%%%%%%%%%%%%%%%%%%%%%%%%%%%%%%%%%%%%%%%%%%%%%%%%%%%%%%%%
\section{Hyperplane arrangements and cone complex}\label{sect:conecomplex}
%%%%%%%%%%%%%%%%%%%%%%%%%%%%%%%%%%%%%%%%%%%%%%%%%%%%%%%%%%%%%%%%%%%%

From now on, we will consider the case when $\A$ is the category of finitely generated modules $\mod A$ over a finite dimensional algebra $A$ over an algebraically closed field $k$. The Grothendieck group of $A$ is denoted by $K_0(A)$, and the class in the Grothendieck group of an $A$-module $M$ is denoted by $[M]$, which is identified with the dimension vector of $M$. Moreover, we assume that $rk(K_0(A))=n$. The Auslander-Reiten translation in $\mod A$ is denoted by $\tau$. 
Unless otherwise specified, we consider only nonzero modules $M$.

As we said before, Rudakov's notion of stability generalizes the stability conditions introduced by King in \cite{Ki}, which is the following.

\begin{defi}\cite[Definition 1.1]{Ki}
%Let $A$ be an algebra and 
Let $\theta:K_0(A)\otimes\mathbb{R} \ra \mathbb{R}$ be an element of $(\mathbb{R}^n)^*$, the dual vector space of $\mathbb{R}^n$. An $A$-module $M\in\mod A$ is called \textit{$\theta$-stable} (or \textit{$\theta$-semi-stable}) if $\theta(M)=0$ and $\theta(L)<0$ ($\theta(L)\leq 0$, respectively) for every proper submodule $L$ of $M$. 
\end{defi}

The stability condition of King depends on two variables: the module and the linear functional. Fixing a module leads to the following definition.

\begin{defi}
The \textit{stability space of an $A-$module $M$} is $$\D(M)=\{\theta\in(\mathbb{R}^n)^* : M \text{ is $\theta$-semi-stable}\}.$$
\end{defi}

Note that $M$ induces an element $\varphi_M$ in the double dual $(\mathbb{R}^n)^{**}$ of $\mathbb{R}^n$ defined by $\varphi_M(\theta)=\theta(M)$ for all $\theta\in(\mathbb{R}^n)^{*}$. Therefore $\D(M)$ is included in $\ker \varphi_M$ which is an hyperplane of $(\mathbb{R}^n)^*$. Hence $\D(M)$ has codimension at least 1. 

\begin{defi}
Let $\A$ be an abelian category having exactly $n$ non isomorphic simple objects and $M$ be an object of $\A$. Then the stability space $\D(M)$ of $M$ is said to be a \textit{wall} when $\D(M)$ has codimension one. 
We refer in this case to $\D(M)$ as the wall defined by $M$.
\end{defi}

Note that not every $\theta$ belongs to the stability space $\D(M)$ for some nonzero module $M$. For instance, is easy to see that 
$$\theta(x_1,x_2, \dots, x_n)=\sum_{i=1}^n x_i$$
is an example of such a functional for every algebra $A$. This leads to the following definition.

\begin{defi}
Let $A$ be an algebra such that $rk(K_0(A))=n$ and
$$\mathfrak{R}=(\mathbb{R}^n)^*\setminus\overline{\bigcup\limits_{\substack{ M\in \mod A}}\mathcal{D}(M)}$$
be the maximal open set of all $\theta$ having no $\theta$-semi-stable modules other that the zero object. Then a dimension $n$ connected component $\mathfrak{C}$ of $\mathfrak{R}$ is called a \textit{chamber}. 
\end{defi}

We will consider the following example throughout the paper.

\begin{ex}\label{runningexample}
Consider the path algebra $\mathbb{A}_2=kQ$ of the quiver $Q=\xymatrix{1\ar[r]& 2}$, having the following Auslander-Reiten quiver. 

\begin{center}
			\begin{tikzpicture}[line cap=round,line join=round ,x=2.0cm,y=1.8cm]
				\clip(-1.2,-0.1) rectangle (1.2,1.1);
					\draw [->] (-0.8,0.2) -- (-0.2,0.8);
					\draw [dashed] (-0.8,0.0) -- (0.8,0.0);
					\draw [->] (0.2,0.8) -- (0.8,0.2);
				
				\begin{scriptsize}
					\draw[color=black] (-1,0) node {$S(2)$};
					\draw[color=black] (0,1) node {$P(1)$};
					\draw[color=black] (1,0) node {$S(1)$};
				\end{scriptsize}
			\end{tikzpicture}
		\end{center}

We can see the wall and chamber structure of $\mathbb{A}_2$ in the Figure \ref{w&cA2}.

\begin{figure}
			\begin{center}
				\begin{tikzpicture}[line cap=round,line join=round,>=triangle 45,x=3.0cm,y=3.0cm]
					\clip(-1.5,-1.5) rectangle (1.5,1.5);
						\draw (0.,0.) -- (0.,1.5);
						\draw (0.,0.) -- (0.,-1.5);
						\draw [domain=-1.5:0.0] plot(\x,{(-0.-0.*\x)/-1.});
						\draw [domain=0.0:1.5] plot(\x,{(-0.-1.*\x)/1.});
						\draw [domain=0.0:1.5] plot(\x,{(-0.-0.*\x)/1.});
					\begin{scriptsize}
						\draw[color=black] (0,1.25) node[anchor= south west] {$\mathfrak{D}(S(1))$};
						\draw[color=black] (-0.2,-1.4) node {$\mathfrak{D}(S(1))$};
						\draw[color=black] (-1.25,-0.1) node {$\mathfrak{D}(S(2))$};
						\draw[color=black] (1.15,-1.4) node {$\mathfrak{D}(P(1))$};
						\draw[color=black] (1.25,0.12) node {$\mathfrak{D}(S(2))$};
					\end{scriptsize}
				\end{tikzpicture}
			\end{center}
			\caption{Wall and chamber structure for $\mathbb{A}_2$}
            \label{w&cA2}
		\end{figure}
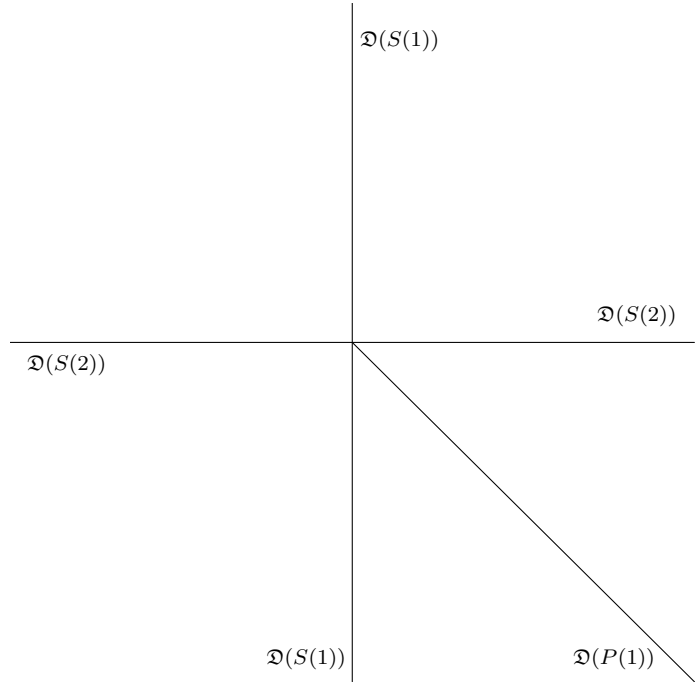
\end{ex}

In this section we study how the simplicial complex introduced by Demonet, Iyama and Jasso in \cite{DIJ} can be embedded into the wall and chamber structure of a given algebra.

In the first subsection we study the $\theta$-stable modules, for some linear functional $\theta$ associated to the positive cone of the $g$-vectors of some $\tau$-rigid pair $(M',P')$. In the second subsection we fix a $\tau$-tilting pair $(M,P)$ and we show that the positive cone of its $g$-vectors induces a chamber $\Ch_{(M,P)}$ and we give a complete description of the walls surrounding $\Ch_{(M,P)}$. Finally, in the third section, we study the relation between chambers and torsion classes.

%%%%%%%%%%%%%%%%%%%%%%%%%%%%%%%%%%%%%%%%%%%%%%%%%%%%%%%%%%%
\subsection{On $\theta_{\alpha(M,P)}$-semi-stable modules}

One aim of the paper is to give a geometrical characterization of the maximal green sequences in the module category $\mod A$ for an algebra $A$. We achieve this using the $\tau$-tilting theory introduced by Adachi, Iyama and Reiten in \cite{AIR}. In particular we consider the $\tau$-rigid and $\tau$-tilting pairs, see the following definition.

\begin{defi}\cite[Definition 0.1 and 0.3]{AIR}\label{list}
		Let $A$ an algebra, $M$ an $A$-module and $P$ an $A$-projective module. The pair $(M,P)$ is said \textit{$\tau$-rigid} if:

		\begin{itemize}
			\item $\Hom_{A}(M,\tau M)=0$;
			\item $\Hom_{A}(P,M)=0$.
		\end{itemize}
		
		Moreover, we say that $(M,P)$ is \textit{$\tau$-tilting} (or \textit{almost $\tau$-tilting}) if $|M|+|P|=n$ (or $|M|+|P|=n-1$, respectively).
	\end{defi}

The $\tau$-rigid pairs play a key role in the connection between the maximal green sequences and the stability condition introduced by King. On one hand, as we will see in Corollary \ref{SVMmod}, every maximal green sequence is determined by a suitable set of $\tau$-tilting pairs. Conversely, the $g$-vectors of every $\tau$-rigid pair $(M,P)$ induce some linear functionals $\theta_{\alpha(M,P)}$ for which we give a complete description of the category of $\theta_{\alpha(M,P)}$-semi-stable modules. The definition of the $g$-vectors is the following. 

\begin{defi}
Let $M$ be an $A$-module. If $$P_1\longrightarrow P_0\longrightarrow M\longrightarrow 0$$ is a minimal projective presentation of $M$, where $P_0=\bigoplus\limits_{i=1}^n P(i)^{c_i}$ and $P_1=\bigoplus\limits_{i=1}^n P(i)^{c'_i}$, then we define the $g$-vector of $M$ to be 
$$g^M=(c_1-c'_1, c_2-c'_2,\dots, c_n-c'_n).$$
\end{defi}

\begin{rmk}
Decomposing $A=P(1)\oplus \dots \oplus P(n)$  as a sum of indecomposable $A$-modules,  the set $$\{g^{P(1)}, \dots, g^{P(n)}\}$$ yields the canonical basis of $\mathbb{Z}^n$. 
\end{rmk}

The following two results give us important properties of the $g$-vectors of $\tau$-rigid and $\tau$-tilting pairs. The first of them  extends the previous remark to all $\tau$-tilting pairs.

\begin{theorem}\cite[Theorem 5.1]{AIR}\label{LI}
Let $(M,P)$ be a $\tau$-tilting pair such that $M=\bigoplus_{i=1}^kM_i$ and $P=\bigoplus_{j=k+1}^n P_j$. Then the set
$$\{g^{M_1},\dots, g^{M_k}, -g^{P_{k+1}}, \dots, -g^{P_n}\}$$
forms a basis for $\mathbb{Z}^n$.
\end{theorem}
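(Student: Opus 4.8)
The final statement is (part of) Theorem~5.1 of Adachi–Iyama–Reiten, so the task is really to reconstruct its proof. The plan is to translate $\tau$-rigid pairs into two-term complexes in the bounded homotopy category $K^b(\proj A)$, and then to use the fact that the classes of the indecomposable summands of a silting complex of $K^b(\proj A)$ form a $\mathbb{Z}$-basis of $K_0(K^b(\proj A))\cong K_0(A)\cong\mathbb{Z}^n$.

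First I would set up the dictionary. To a module $X$ with minimal projective presentation $P_1^X\to P_0^X\to X\to 0$ associate the two-term complex $T_X=(P_1^X\to P_0^X)$ in homological degrees $-1,0$, and to an indecomposable projective $P_j$ associate $P_j[1]$. Put $T=T_{M_1}\oplus\cdots\oplus T_{M_k}\oplus P_{k+1}[1]\oplus\cdots\oplus P_n[1]$. Identifying $K_0(K^b(\proj A))=K_0(\proj A)$ with $\mathbb{Z}^n$ via the basis $\{[P(1)],\dots,[P(n)]\}$ — which is precisely the identification implicit in the definition of $g$-vectors — one gets $[T_{M_i}]=[P_0^{M_i}]-[P_1^{M_i}]=g^{M_i}$ and $[P_j[1]]=-[P_j]=-g^{P_j}$. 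Hence the set in the statement is exactly the set of classes of the indecomposable summands of $T$, and it remains to prove that this set is a $\mathbb{Z}$-basis of $K_0(K^b(\proj A))$.

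The key input here is the theorem of Adachi–Iyama–Reiten that $(M,P)$ is a $\tau$-tilting pair if and only if $T$ is a silting complex in $K^b(\proj A)$, i.e. $\Hom_{K^b(\proj A)}(T,T[\ell])=0$ for all $\ell>0$ and $\operatorname{thick}(T)=K^b(\proj A)$; moreover in that case $T_{M_1},\dots,T_{M_k},P_{k+1}[1],\dots,P_n[1]$ are indecomposable and pairwise non-isomorphic, so $T$ has exactly $n$ pairwise non-isomorphic indecomposable summands (this is where $\Hom_A(M,\tau M)=0$ makes the $T_{M_i}$-part presilting, $\Hom_A(P,M)=0$ keeps the $P_j[1]$ from overlapping the $T_{M_i}$, and $|M|+|P|=n$ forces silting rather than merely presilting). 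Granting this, I am reduced to the purely triangulated fact that the classes of the indecomposable summands of a silting complex of $K^b(\proj A)$ form a $\mathbb{Z}$-basis of $K_0$. I would prove this by silting mutation: the trivial silting complex $A=P(1)\oplus\cdots\oplus P(n)$ has summand classes equal to the standard basis; every two-term silting complex is connected to $A$ by finitely many single-summand mutations (the combinatorial content of $\tau$-tilting mutation); and a mutation replaces one summand $T_j$ by $T_j^{\ast}$ inside a triangle $T_j\to E\to T_j^{\ast}\to T_j[1]$ with $E\in\add(T/T_j)$, so the new family of classes is obtained from the old one by the column operation $[T_j]\mapsto[E]-[T_j]$, of determinant $\pm1$; hence being a $\mathbb{Z}$-basis is preserved along the chain. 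An alternative that sidesteps mutation connectedness: a silting complex induces a bounded co-$t$-structure on $K^b(\proj A)$ with coheart $\add T$, and for such a structure $K_0$ is free on the isoclasses of indecomposables of the coheart.

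I expect the reduction ``silting complexes give $K_0$-bases'' to be the only genuine obstacle: the mutation argument requires that every two-term silting complex be reachable from $A$ by single-summand mutations, which is true but not purely formal, while the co-$t$-structure argument requires a little weight-structure theory. There is also a more elementary partial route, via the Auslander–Reiten pairing $\langle g^X,[N]\rangle=\dim\Hom_A(X,N)-\dim\Hom_A(N,\tau X)$ (on suitable representatives): pairing a hypothetical relation $\sum a_i g^{M_i}=\sum b_j g^{P_j}$ with the classes $[M_l]$ yields, using $\Hom_A(M,\tau M)=0$ and $\Hom_A(P,M)=0$, the equations $\sum_i a_i\dim\Hom_A(M_i,M_l)=0$, and pairing with the simple tops of the $P_j$ expresses the $b_j$ through the $a_i$; this reduces $\mathbb{Q}$-linear independence to invertibility of the matrix $\bigl(\dim\Hom_A(M_i,M_l)\bigr)_{i,l}$, but it does not by itself deliver unimodularity over $\mathbb{Z}$, which is why some silting-theoretic input seems unavoidable.
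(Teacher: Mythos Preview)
The paper does not give its own proof of this statement; it simply quotes it from \cite{AIR} as a background result, so there is no in-paper argument to compare yours against. For what it is worth, your overall strategy---pass to the two-term complex $T$ in $K^b(\proj A)$, identify the $g$-vectors with the classes of its summands, and then invoke the fact that the indecomposable summands of a silting object form a $\mathbb{Z}$-basis of $K_0(K^b(\proj A))$---is exactly the route taken in \cite{AIR}, which in turn cites Aihara--Iyama for that last fact.

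There is, however, a genuine gap in one of your two proposed proofs of the silting-to-basis step. Your mutation argument assumes that every two-term silting complex is connected to $A$ by a finite chain of single-summand mutations. You call this ``true but not purely formal''; in fact it is \emph{not known} in general. Connectedness of the $\tau$-tilting (equivalently two-term silting) exchange graph is established for $\tau$-tilting finite algebras in \cite{DIJ}, but for arbitrary finite-dimensional algebras it remains open. So the mutation route, as written, does not close. Your co-$t$-structure (weight-structure) alternative is sound and is in substance the Aihara--Iyama argument: a silting object yields a bounded co-$t$-structure whose coheart is $\add T$, and the weight decomposition shows that $K_0$ is free on the indecomposables of the coheart. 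That route carries the whole proof, so your proposal survives---but drop the mutation-connectedness branch, or at least flag it as conditional.
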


\begin{theorem}\cite[Theorem 5.5]{AIR}\label{inj-g-vecteurs}
Let $M$ and $N$ be two $\tau$-rigid $A$-modules. Then $g^M=g^N$ if and only if $M$ is isomorphic to $N$.
\end{theorem}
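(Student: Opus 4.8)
The plan is to recast the statement inside the homotopy category $K^b(\proj A)$, where the $g$-vector of a module becomes the class of a two-term complex in the Grothendieck group, and then to use that two-term silting complexes are precisely the maximal two-term presilting complexes. The implication "$M\cong N\Rightarrow g^M=g^N$" is trivial, since $g^M$ is read off from a minimal projective presentation and such a presentation is unique up to isomorphism; all the content is in the converse.

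\textbf{Translation.} Recall from \cite{AIR} that sending a $\tau$-rigid pair $(X,R)$ to the complex $T_{(X,R)}=\big(P_1\oplus R\xrightarrow{(d,\,0)}P_0\big)$, concentrated in degrees $-1$ and $0$, where $P_1\xrightarrow{d}P_0\to X\to 0$ is a minimal projective presentation of $X$, defines a bijection (up to isomorphism) from basic $\tau$-rigid pairs onto basic two-term presilting complexes in $K^b(\proj A)$, which restricts to a bijection between $\tau$-tilting pairs and two-term silting complexes. In the basis $\{[P(1)],\dots,[P(n)]\}$ of $K_0(K^b(\proj A))\cong\mathbb{Z}^n$, the class of $T_{(X,R)}$ equals $[P_0]-[P_1]-[R]$, which for $R=0$ is exactly $g^X$. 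Hence Theorem~\ref{inj-g-vecteurs} is the case $R=0$ of the assertion that a two-term presilting complex is determined up to isomorphism by its class in $K_0(K^b(\proj A))$, and it is this statement that I would prove.

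\textbf{Reduction to silting complexes.} Let $T$ and $T'$ be basic two-term presilting complexes with $[T]=[T']$. Enlarge $T$ to a two-term silting complex $S=T\oplus U$; this is possible since every $\tau$-rigid pair is a direct summand of a $\tau$-tilting pair \cite{AIR}, equivalently since a basic two-term presilting complex has at most $n$ indecomposable summands, with equality exactly for the silting ones, so a non-silting one can always be enlarged. Suppose for the moment that $S\oplus T'$ is again presilting; being two-term presilting it then has at most $n$ indecomposable summands, but it contains the $n$ summands of $S$, so $T'\in\add S$. Since $S$ is basic we may write $T'=T'_a\oplus T'_b$ with $T'_a\in\add T$ and $T'_b\in\add U$; then $[T'_a]+[T'_b]=[T]$, where $[T]$ and $[T'_a]$ involve only classes of summands of $T$ while $[T'_b]$ involves only classes of summands of $U$, and as the classes of the indecomposable summands of $S$ are $\mathbb{Z}$-linearly independent (this is Theorem~\ref{LI} transported through the bijection above), the $\add T$-part and the $\add U$-part must vanish separately. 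This forces $[T'_b]=0$, hence $T'_b=0$, and $[T'_a]=[T]$ with both sides sums of distinct classes of summands of $T$, hence $T'_a\cong T$. Therefore $T'\cong T$.

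\textbf{The crux.} Everything thus reduces to proving that $S\oplus T'$ is presilting, i.e. that $\Hom_{K^b}(S,T'[1])=0$ and $\Hom_{K^b}(T',S[1])=0$, where we already know the analogous vanishings for $S$ alone and for $T'$ alone, $T$ being a summand of the silting complex $S$. The natural bookkeeping tool is that the Euler form $\chi(X,Y)=\sum_{k\in\mathbb{Z}}(-1)^k\dim\Hom_{K^b}(X,Y[k])$ is bilinear in the classes of $X$ and $Y$; since $S$ and $T'$ are two-term, only $k\in\{-1,0,1\}$ contribute, and the identity $\chi(S,T')=\chi(S,T)$ together with $\Hom_{K^b}(S,T[1])=0$ produces a linear relation between $\dim\Hom_{K^b}(S,T'[1])$, $\dim\Hom_{K^b}(S,T'[-1])$ and Hom-dimensions that depend only on the classes. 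Turning this relation into the required vanishing is the main obstacle, and it cannot be purely formal: the $\tau$-rigidity of $T'$ must be used. Concretely, I would pass back to module theory via the truncation triangle $H^{-1}(T')[1]\to T'\to H^0(T')\to H^{-1}(T')[2]$ — which, after applying $\Hom_{K^b}(S,-)$ and using that $S$ is two-term, expresses $\Hom_{K^b}(S,T')$ in terms of $\Hom_A(H^0(S),H^0(T'))$ and an $\Ext$-type contribution controlled by $H^{-1}(T')$ (which for $T'=T_N$ is essentially the second syzygy $\Omega^2 N$) — and then invoke the Auslander--Reiten formula and the $\tau$-tilting reduction of \cite{AIR} so that the $\tau$-rigidity of $N$, together with the equality $g^N=g^M$, pins these dimensions down to exactly the values they take for $M$, yielding the vanishing. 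This dimension count — extracting $\Hom$-orthogonality purely from the coincidence of $g$-vectors — is where the real work of the proof lies; the rest is the formal silting bookkeeping above.
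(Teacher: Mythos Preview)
The paper does not supply its own proof of this theorem; it is simply quoted from \cite[Theorem~5.5]{AIR} as background and used without argument. There is therefore no in-paper proof to compare your attempt against.

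Evaluated on its own terms, your proposal has a genuine gap at exactly the place you label ``the crux''. The reduction is sound: if $S\oplus T'$ were known to be presilting, the $K_0$-basis bookkeeping you describe would finish the argument immediately. But you do not establish that $S\oplus T'$ is presilting. The Euler-form identity $\chi(S,T')=\chi(S,T)$ is a single linear relation among the three unknown dimensions $\hom(S,T'[-1])$, $\hom(S,T')$, $\hom(S,T'[1])$ and cannot by itself force the last one to vanish; the symmetric relation $\chi(T',S)=\chi(T,S)$ adds nothing new. Your closing paragraph about truncation triangles, the Auslander--Reiten formula and ``pinning dimensions down'' is a plan rather than an argument: you never write down which concrete identity the $\tau$-rigidity of $N$ is supposed to produce, nor why the equality $g^N=g^M$ gives any control over $\Hom_{K^b}(S,T'[1])$ beyond the Euler form you already used. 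Note too that, since the theorem is true, $S\oplus T'$ \emph{is} presilting a posteriori, so no counterexample will appear; the point is that proving this compatibility directly is essentially as hard as the original statement, and the substantive input used in \cite{AIR} --- a positivity statement to the effect that a two-term presilting object whose class lies in the non-negative cone spanned by the summands of a two-term silting $S$ actually belongs to $\add S$ --- is precisely what is absent from your sketch.
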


In this paper, when we write $\langle -,-\rangle$ we are referring to the canonical inner product in $\mathbb{R}^n$ which is defined by
$$\langle v,w\rangle=\sum_{i=1}^n v_iw_i$$
for every $v$ and $w$ in $\mathbb{R}^n$. It is well-known that $\Phi:\mathbb{R}^n\ra (\mathbb{R}^n)^*$ defined by $\Phi(v)(-)=\theta_{v}(-)=\langle v,-\rangle$ is an anti-isomorphism of vector spaces. From now on, in order to illustrate our results with figures, we will identify in the examples $\theta\in(\mathbb{R}^n)^*$ with the vector $v=\Phi^{-1}(\theta)$. 

Given $M\in\mod A$, we denote by $\theta_M$ the functional induced by its $g$-vector, that is $\theta_M(-)=\Phi(g^M)=\langle g^M,-\rangle$. The next result of Auslander and Reiten  \cite{AR1} shows that if we apply $\theta_M$ to the dimension vector $[N]$ of a given module $N$, then we can get explicit homological information.

\begin{theorem}[\cite{AR1}, Theorem 1.4.(a)]\label{formula}
Let $M$ and $N$ be arbitrary modules over an algebra $A$. Then we have
$$\theta_M ([N])=\langle g^{M},[N]\rangle=\dim_k(\emph{Hom}_A(M,N))-\dim_k(\emph{Hom}_A(N,\tau_A M))$$
\end{theorem}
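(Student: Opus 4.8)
The statement to prove is the Auslander--Reiten formula
\[
\theta_M([N]) = \langle g^M, [N]\rangle = \dim_k \Hom_A(M,N) - \dim_k \Hom_A(N, \tau_A M).
\]
Since this is quoted as \cite[Theorem 1.4.(a)]{AR1}, a full reproof is not expected in the paper, but here is how I would establish it. The plan is to start from a minimal projective presentation $P_1 \xrightarrow{f} P_0 \to M \to 0$, apply $\Hom_A(-,N)$, and then identify each term appearing in the resulting exact sequence with one side of the claimed identity.

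\textbf{Step 1: Reduce $\langle g^M,[N]\rangle$ to projectives.} Writing $P_0 = \bigoplus_i P(i)^{c_i}$ and $P_1 = \bigoplus_i P(i)^{c_i'}$, so that $g^M = (c_i - c_i')_i$, one has $\langle g^M, [N]\rangle = \sum_i (c_i - c_i') \langle e_i, [N]\rangle$. Since $\langle [P(i)], [N]\rangle$ need not equal $\langle e_i,[N]\rangle$ in general, the cleaner route is to use that for a projective $P(i)$ one has $\dim_k \Hom_A(P(i),N) = \dim_k (N e_i) = [N]_i = \langle e_i, [N]\rangle$; hence $\langle g^M,[N]\rangle = \dim_k\Hom_A(P_0,N) - \dim_k \Hom_A(P_1,N)$, using additivity of $\Hom$ and of $\dim_k$.

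\textbf{Step 2: The four-term exact sequence.} Applying $\Hom_A(-,N)$ to $P_1 \xrightarrow{f} P_0 \to M \to 0$ gives the exact sequence
\[
0 \to \Hom_A(M,N) \to \Hom_A(P_0,N) \xrightarrow{f^*} \Hom_A(P_1,N) \to \coker f^* \to 0.
\]
Taking alternating sums of dimensions yields
\[
\dim_k\Hom_A(P_0,N) - \dim_k\Hom_A(P_1,N) = \dim_k\Hom_A(M,N) - \dim_k \coker f^*,
\]
so combining with Step 1 reduces the theorem to the identification $\dim_k \coker f^* = \dim_k \Hom_A(N, \tau_A M)$.

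\textbf{Step 3: Identify the cokernel via the transpose.} Here is where the main work lies, and it is the step I expect to be the central obstacle. Recall that $\tau_A M = D\,\mathrm{Tr}\, M$, where $\mathrm{Tr}\,M = \coker\big(\Hom_A(P_0,A) \xrightarrow{f^\vee} \Hom_A(P_1,A)\big)$ is the transpose, computed from the \emph{same} minimal presentation. The point is to show $\coker f^* \cong D\Hom_A(N,\tau_A M)$, or equivalently that $\dim_k \coker(\Hom_A(-,N) \text{ applied to } f)$ equals $\dim_k\Hom_A(N, \tau_A M)$. This is exactly the content of the ``defect'' computation underlying the Auslander--Reiten formula: one uses the functorial isomorphism $\Hom_A(P,N) \cong \Hom_A(P,A)\otimes_A N$ for $P$ projective (together with minimality of the presentation to ensure no projective summands interfere), so that $f^*$ becomes $f^\vee \otimes_A N$, whence $\coker f^* \cong (\mathrm{Tr}\, M)\otimes_A N \cong D\Hom_A(N, D\,\mathrm{Tr}\,M) = D\Hom_A(N,\tau_A M)$, the middle isomorphism being the standard adjunction / Hom-tensor duality over the field $k$. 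Passing to $\dim_k$ and using $\dim_k DX = \dim_k X$ finishes the proof. The delicate points to handle carefully are the minimality hypothesis (needed so that $\mathrm{Tr}\,M$ is computed without spurious projective summands and so that the left exactness in Step 2 produces exactly $\Hom_A(M,N)$) and the naturality of the isomorphism $\Hom_A(P,-) \cong \Hom_A(P,A)\otimes_A (-)$ in the projective variable so that it intertwines $f^*$ with $f^\vee\otimes N$.
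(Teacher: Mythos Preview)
The paper does not supply its own proof of this theorem; it is quoted verbatim from \cite[Theorem~1.4(a)]{AR1} and immediately used as input for Corollary~\ref{tauformula} and the results that follow. There is therefore nothing in the paper to compare your argument against.

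That said, your argument is correct and is essentially the standard derivation. One minor comment on Step~3: minimality of the presentation is less crucial than you suggest. The chain of identifications
\[
\coker f^{*} \;\cong\; \coker(f^{\vee}\otimes_A N) \;\cong\; (\coker f^{\vee})\otimes_A N \;\cong\; D\,\Hom_A\bigl(N,\, D\coker f^{\vee}\bigr)
\]
holds for \emph{any} projective presentation, by naturality of the evaluation isomorphism $\Hom_A(P,N)\cong\Hom_A(P,A)\otimes_A N$ for finitely generated projectives, right-exactness of $-\otimes_A N$, and Hom--tensor duality over $k$. Minimality enters only so that $\coker f^{\vee}$ agrees with the transpose as usually defined (with no extraneous projective summands), whence $D\coker f^{\vee}=\tau_A M$ on the nose, and so that $g^M$ matches the paper's definition. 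In other words, minimality is a matter of matching conventions rather than a genuine analytic obstacle in the proof.
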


For the sake of simplicity, instead of writing $\dim_k(\Hom_A(M,N))$ we abbreviate it by $\emph{hom}(M,N)$. 
Also, we denote $M^{\perp}=\{X\in\mod A: \Hom_A(M,X)=0 \}$ and, dually, $^{\perp}M=\{Y\in\mod A: \Hom_A(Y,M)=0\}$. 

The following formula is a particular case of \cite[Theorem 1.4]{AR1}, but plays a central role in the remaining part of the paper.

\begin{cor}\label{tauformula}
Let $A$ be an algebra, $(M,P)$ be a $\tau$-rigid pair and $N$ be an arbitrary $A$-module. Then
$$\theta_{(M,P)}([N])=\langle g^{M}-g^{P},[N]\rangle=hom_A(M,N))-hom_A(N,\tau_A M))-hom_A(P,N)).$$
\end{cor}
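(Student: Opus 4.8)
The plan is to deduce Corollary~\ref{tauformula} directly from Theorem~\ref{formula} by applying it separately to the module summand $M$ and to the projective summand $P$ of the $\tau$-rigid pair, and then combining the two identities by linearity of the inner product. First I would write $g^{(M,P)} = g^M - g^P$ (this is the definition of the functional $\theta_{(M,P)}$ attached to the pair, and it is consistent with the sign conventions fixed just before the statement), so that for any $A$-module $N$ we have
$$\theta_{(M,P)}([N]) = \langle g^M - g^P, [N]\rangle = \langle g^M, [N]\rangle - \langle g^P, [N]\rangle.$$

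Next I would apply Theorem~\ref{formula} to each term. For the first term this gives immediately
$$\langle g^M, [N]\rangle = \hom_A(M,N) - \hom_A(N, \tau_A M).$$
For the second term, I would again invoke Theorem~\ref{formula}, now with the projective module $P$ in the first slot, to get $\langle g^P, [N]\rangle = \hom_A(P,N) - \hom_A(N, \tau_A P)$. The key observation is that $\tau_A P = 0$ since $P$ is projective (the Auslander--Reiten translate of a projective module vanishes), so $\hom_A(N, \tau_A P) = 0$ and hence $\langle g^P, [N]\rangle = \hom_A(P,N)$. Substituting both expressions into the displayed difference yields
$$\theta_{(M,P)}([N]) = \hom_A(M,N) - \hom_A(N, \tau_A M) - \hom_A(P,N),$$
which is exactly the claimed formula.

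There is essentially no obstacle here: the statement is a formal consequence of the additivity of $\langle -,-\rangle$ in its first argument together with the already-quoted Theorem~\ref{formula} of Auslander--Reiten and the triviality $\tau_A P = 0$ for projectives. The only point requiring a word of care is that Theorem~\ref{formula} is stated for arbitrary modules, so it does apply verbatim with $M$ replaced by $P$; one should simply note that for projective $P$ the second homological term drops out. I would also remark that the identity $g^{P(i)} = e_i$ recorded in the remark after the definition of $g$-vectors shows $g^P$ is the sum of the standard basis vectors indexed by the indecomposable summands of $P$, which makes the reduction of the $P$-term completely explicit, though this is not strictly needed for the proof.
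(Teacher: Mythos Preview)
Your proof is correct and follows exactly the approach of the paper: apply Theorem~\ref{formula} once to $M$ and once to $P$, use bilinearity of $\langle-,-\rangle$, and observe that $\tau_A P=0$ because $P$ is projective.
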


\begin{proof}
It suffices to apply Theorem \ref{formula} twice and notice that $\tau P=0$.
\end{proof}

\begin{rmk}
Note that for a $\tau$-tilting module $M$ of $\text{pd} M\leq 1$ we have that $\langle g^{M},[N]\rangle$ coincides with the classical Euler-Ringel form. 
\end{rmk}

Recall that if $(\T,\F)$ is a torsion pair in mod$A$, then for every $A$-module $M$ exists a so-called canonical short exact sequence
$$0\ra tM\ra M\ra M/tM\ra 0$$
where $tM\in\T$ and $M/tM\in\F$  are unique up to isomorphism. 

Given a set of linearly independent vectors $L=\{v_1, \dots, v_t\}$, we define the \textit{positive cone $\mathcal{C}_L$} of $L$ as $$\mathcal{C}_L=\left\{\sum\limits_{i=1}\limits^{t} \alpha_iv_i : \alpha_i>0 \text{ for every $1\leq i\leq t$}\right\}.$$ 

Let $(M,P)$ be a $\tau$-rigid pair where $M=\bigoplus\limits_{i=1}^kM_i$ and $P=\bigoplus\limits_{j=k+1}\limits^tP_j$ are the decomposition of $M$ and $P$ as sums of indecomposable modules, respectively. Then the positive cone $\Co_{(M,P)}$ of the $\tau$-rigid pair $(M,P)$ is the positive cone generated by the $g$-vectors of the indecomposable direct factors of $M$ and the opposite of the $g$-vectors of $P$, that is $$\mathcal{C}_{(M,P)}=\left\{\sum\limits_{i=1}\limits^{k} \alpha_ig^{M_i}-\sum\limits_{j=k+1}\limits^{t} \alpha_j g^{P_j} : \alpha_i>0 \text{ for every $1\leq i\leq t$}\right\}.$$ Moreover the generic vector $\sum\limits_{i=1}\limits^{k} \alpha_ig^{M_i}-\sum\limits_{j=k+1}\limits^{t} \alpha_j g^{P_j}\in\Co_{(M,P)}$ is denoted by $\alpha(M,P)$.

Now that the notation is fixed, we start stating and proving the results of this section.

\begin{lem}\label{lemtrace}
Let $(M,P)$ be a $\tau$-tilting pair and let $\alpha(M,P)\in\Co_{(M,P)}$. Then $\theta_{\alpha(M,P)}([tN])\geq 0$ for every $N\in \emph{mod} A$.

\end{lem}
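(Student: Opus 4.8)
The plan is to unravel what $\theta_{\alpha(M,P)}$ does on the torsion submodule $tN$ with respect to the torsion pair naturally attached to the $\tau$-tilting pair $(M,P)$. Recall from $\tau$-tilting theory that a $\tau$-tilting pair $(M,P)$ determines a torsion pair $(\T,\F)$ in $\mod A$ with $\T = \Fac M = \mathrm{Gen}\, M$ and $\F = M^{\perp} = \{X : \Hom_A(M,X) = 0\}$; in fact $\F$ also coincides with $^{\perp}(\tau M)$ by the Auslander--Reiten correspondence. So the first step is to fix this torsion pair and take the canonical sequence $0 \to tN \to N \to N/tN \to 0$, so that $tN \in \T = \Fac M$.

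Next I would compute $\theta_{\alpha(M,P)}([tN])$ using Corollary~\ref{tauformula}, which gives
$$\theta_{\alpha(M,P)}([tN]) = \sum_{i=1}^{k}\alpha_i\bigl(\hom(M_i,tN) - \hom(tN,\tau M_i)\bigr) - \sum_{j=k+1}^{t}\alpha_j \hom(P_j,tN).$$
Since all $\alpha_i,\alpha_j > 0$, it suffices to show that each bracketed term $\hom(M_i,tN) - \hom(tN,\tau M_i)$ is $\geq 0$ and that $\hom(P_j, tN) = 0$. The second of these is immediate: $tN \in \Fac M \subseteq \Fac M = \T$, and since $(M,P)$ is $\tau$-rigid we have $\Hom_A(P, M) = 0$, hence $\Hom_A(P, \Fac M) = 0$ (a quotient of a sum of copies of $M$ receives no nonzero map from $P$); in particular $\hom(P_j, tN) = 0$. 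For the first, the key point is that $tN \in \Fac M$ and $M$ is $\tau$-rigid, so $\Hom_A(M, \tau M) = 0$, which gives $\Hom_A(\Fac M, \tau M) = 0$ — because $\tau M$ lies in $\F = M^\perp$ and $\F$ is closed under submodules and extensions, but more directly: a module in $\Fac M$ is a quotient of $M^r$, and $\Hom_A(M^r, \tau M) = 0$ forces $\Hom_A(tN, \tau M) = 0$. Wait — that last implication is the wrong direction; a quotient of $M^r$ need not have vanishing Hom \emph{into} $\tau M$ just because $M^r$ does. Let me reconsider: the correct statement is that $\F = \mathrm{Sub}(\tau M)$ is the torsion-free class, equivalently $\F = {}^{\perp}(\tau M)$? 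No — one has $\Hom_A(\T, \F) = 0$ with $\F \ni \tau M$, hence $\Hom_A(\T, \tau M) = 0$, i.e. $\Hom_A(tN, \tau M) = 0$ since $tN \in \T$. That is the clean argument: $\tau M_i \in \F$ for each $i$ (this is exactly the statement that $\F = \mathrm{Sub}(\tau M)$, or can be extracted from $\Hom_A(M, \tau M)=0$ meaning $\tau M \in M^\perp = \F$), and $tN \in \T$, so $\hom(tN, \tau M_i) = 0$.

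Putting this together, $\theta_{\alpha(M,P)}([tN]) = \sum_{i=1}^{k}\alpha_i \hom(M_i, tN) \geq 0$ since each $\alpha_i > 0$ and each $\hom$ is a nonnegative integer, which is exactly the claim. The main obstacle, and the step I would be most careful about, is pinning down the precise form of the torsion pair $(\T,\F) = (\Fac M, M^\perp)$ associated to a $\tau$-tilting pair and verifying that $\tau M_i \in M^\perp$; this is where one invokes the Adachi--Iyama--Reiten theory (namely that $\Hom_A(M, \tau M) = 0$ is literally the statement $\tau M \in M^\perp = \F$), together with the fact that $tN \in \Fac M$ by construction of the canonical sequence. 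Everything else — the expansion via Corollary~\ref{tauformula} and the positivity bookkeeping over the cone coefficients $\alpha_i > 0$ — is routine. One should also note that the hypothesis ``$\tau$-tilting'' (rather than merely $\tau$-rigid) is what guarantees $(\Fac M, M^\perp)$ is an honest torsion pair, so that $tN$ is well-defined; for a general $\tau$-rigid pair the statement would need the torsion pair generated by $\Fac M$ instead.
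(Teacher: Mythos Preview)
Your argument is correct and follows essentially the same route as the paper: use the torsion pair $(\Fac M, M^{\perp})$, observe that $tN \in \Fac M$ forces $\hom(tN,\tau M_i)=0$ (since $\tau M \in M^{\perp}$) and $\hom(P_j,tN)=0$ (by projectivity of $P$ and $\Hom(P,M)=0$), and conclude that the Auslander--Reiten formula reduces to the nonnegative sum $\sum_i \alpha_i \hom(M_i,tN)$. One small remark on your closing comment: $\Fac M$ is already a torsion class for any $\tau$-rigid $M$ by Auslander--Smal{\o}, so the argument in fact goes through verbatim for $\tau$-rigid pairs, not just $\tau$-tilting ones.
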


\begin{proof}
First, we have that Fac$M$ is a torsion class by \cite[Theorem 5.10]{ASfuncfiniteFac}. Therefore $(\Fac M, M^{\perp})$ is a torsion pair. 

Then, for every $N\in\mod A$, there exist a canonical short exact sequence 
$$0\rightarrow tN\rightarrow N\rightarrow N/tN\rightarrow 0$$
where $tN\in \Fac M$ and $N/tN\in M^{\perp}$.

This implies $hom_A(M,tN)>0$ and $\emph{hom}_A(tN,\tau M)=0$. Moreover, since $tN\in \Fac M$, there exists an epimorphism $p: M^k\ra tN$ for some $k\in\mathbb{N}$. Hence the projectivity of $P$ implies that every morphism $f:P\ra tN$ factors through $M^k$. Therefore $f=0$ because $\Hom_A(P,M)=0$ and thus $\emph{hom}_A(P,tN)= 0$. Therefore we have that 
$$\theta_{\alpha(M,P)}([tN])=\sum^{k}_{i=1}\alpha_i\emph{hom}(M_i,tN)-\sum_{i=1}^k\alpha_i\emph{hom}(tN,\tau M_i)-\sum_{j=k+1}^t\alpha_j\emph{hom}(P_j,tN)$$
$$=\sum^{k}_{i=1}\alpha_i\emph{hom}(M_i,tN)>0$$
finishing the proof. 
\end{proof}

\begin{lem}\label{catsemi-stables}
Let $(M,P)$ be a $\tau$-rigid pair and let $\alpha(M,P)\in\Co_{(M,P)}$. Then $N$ is a $\theta_{\alpha(M,P)}$-semi-stable module if and only if $N\in{^{\perp}\tau M}\cap M^{\perp}\cap P^{\perp}$.
\end{lem}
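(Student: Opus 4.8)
The plan is to characterize $\theta_{\alpha(M,P)}$-semi-stability for $N$ via Corollary~\ref{tauformula}, which expresses $\theta_{\alpha(M,P)}([L])$ as the alternating sum $\sum_i \alpha_i\,\mathrm{hom}(M_i,L) - \sum_i \alpha_i\,\mathrm{hom}(L,\tau M_i) - \sum_j \alpha_j\,\mathrm{hom}(P_j,L)$ for any module $L$. The key observation is that every term of this sum is nonnegative \emph{except} possibly the middle one; so $\theta_{\alpha(M,P)}([L]) \leq 0$ forces $\mathrm{hom}(L,\tau M_i) > 0$ for some $i$, while $\theta_{\alpha(M,P)}([L]) = 0$ together with $\mathrm{hom}(L,\tau M) = 0$ would force also $\mathrm{hom}(M,L) = 0$ and $\mathrm{hom}(P,L) = 0$. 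I would organize the argument as the two implications of the stated equivalence.

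\emph{($\Leftarrow$)} Suppose $N \in {}^{\perp}\tau M \cap M^{\perp} \cap P^{\perp}$. Then for $N$ itself all three sums in the formula vanish, so $\theta_{\alpha(M,P)}([N]) = 0$. Now let $L \subsetneq N$ be a proper submodule. Since ${}^{\perp}\tau M$ is closed under submodules (it is the torsion-free class ${}^{\perp}(\tau M)$, or one argues directly: a submodule of $N$ maps to $\tau M_i$ only via the zero map since $N$ does), we get $\mathrm{hom}(L,\tau M_i) = 0$ for all $i$. Hence
$$\theta_{\alpha(M,P)}([L]) = \sum_{i=1}^k \alpha_i\,\mathrm{hom}(M_i,L) - \sum_{j=k+1}^t \alpha_j\,\mathrm{hom}(P_j,L) \ ?$$
wait --- this need not be $\leq 0$, since $\mathrm{hom}(M_i,L)$ can be positive. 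So instead I pass to quotients: by the Remark after Definition~\ref{defss}, $N$ is $\theta_{\alpha(M,P)}$-semi-stable iff $\theta_{\alpha(M,P)}([N'])\geq 0$ for every quotient $N'$ of $N$ (using $\theta_{\alpha(M,P)}([N])=0$ and additivity on the short exact sequence $0\to L\to N\to N'\to 0$). If $N'$ is a quotient of $N$, then $\mathrm{hom}(M_i,N')$ and $\mathrm{hom}(P_j,N')$ can still be nonzero, but $\mathrm{hom}(N',\tau M_i)$: a map $N'\to \tau M_i$ pulls back to a map $N\to\tau M_i$, which is $0$ as $N\in{}^{\perp}\tau M$; composing with the surjection shows the original map is $0$. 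Thus the middle sum vanishes for $N'$ and $\theta_{\alpha(M,P)}([N']) = \sum_i\alpha_i\,\mathrm{hom}(M_i,N') - \sum_j\alpha_j\,\mathrm{hom}(P_j,N') $. The negative contribution is the obstacle here; I would eliminate it by noting that every morphism $P_j \to N'$ factors through the projective cover, but more simply: one should instead test \emph{submodules} with the dual formula, or observe directly that $\mathrm{hom}(M_i, N') = 0$ and $\mathrm{hom}(P_j, N') = 0$ need not hold, so the cleanest route is: $N\in M^\perp$ and quotients of $N$ need not be in $M^\perp$, so one genuinely needs the quotient formulation applied carefully. I would handle this by showing $\theta_{\alpha(M,P)}([N'])\geq \sum_i \alpha_i\,\mathrm{hom}(M_i,N') \geq 0$ once we verify $\mathrm{hom}(P_j,N')\leq \mathrm{hom}(M_i, N')$-type bounds fail in general --- so in fact the right statement to prove first is the sublemma that for $N \in {}^\perp\tau M \cap M^\perp \cap P^\perp$, \emph{every} quotient $N'$ satisfies $\mathrm{hom}(P_j,N')=0$ as well, because $P_j$ is projective and a surjection-image argument shows $\mathrm{Hom}(P_j,N)\twoheadrightarrow \mathrm{Hom}(P_j,N')$, hence $\mathrm{hom}(P_j,N')=0$ too. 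Then $\theta_{\alpha(M,P)}([N']) = \sum_i \alpha_i\,\mathrm{hom}(M_i,N')\geq 0$, proving $N$ is $\theta_{\alpha(M,P)}$-semi-stable.

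\emph{($\Rightarrow$)} Suppose $N$ is $\theta_{\alpha(M,P)}$-semi-stable, so $\theta_{\alpha(M,P)}([N]) = 0$ and $\theta_{\alpha(M,P)}([L]) \leq 0$ for all proper submodules $L$. First I show $N \in {}^\perp\tau M$, i.e.\ $\mathrm{hom}(N,\tau M_i) = 0$ for all $i$. Using Corollary~\ref{tauformula} for $N$ itself: $0 = \theta_{\alpha(M,P)}([N]) = \sum_i\alpha_i\,\mathrm{hom}(M_i,N) - \sum_i\alpha_i\,\mathrm{hom}(N,\tau M_i) - \sum_j\alpha_j\,\mathrm{hom}(P_j,N)$. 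This alone does not give vanishing; I would instead use the submodule condition together with the fact that $\tau M_i$-images detect a canonical submodule. Concretely, let $0\to tN\to N\to N/tN\to 0$ be the canonical sequence for the torsion pair $(\mathrm{Fac}\,M, M^\perp)$ (valid since $\mathrm{Fac}\,M$ is a torsion class by \cite{ASfuncfiniteFac}, as used in Lemma~\ref{lemtrace}). Lemma~\ref{lemtrace} gives $\theta_{\alpha(M,P)}([tN])\geq 0$; but $tN$ is a submodule of $N$, so semi-stability gives $\theta_{\alpha(M,P)}([tN])\leq 0$, whence $\theta_{\alpha(M,P)}([tN]) = 0$. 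Since in the proof of Lemma~\ref{lemtrace} one has $\theta_{\alpha(M,P)}([tN]) = \sum_i\alpha_i\,\mathrm{hom}(M_i,tN)$ with all $\alpha_i>0$, we conclude $\mathrm{hom}(M_i,tN) = 0$ for all $i$, hence $tN\in M^\perp$; but $tN\in\mathrm{Fac}\,M$ and $\mathrm{Fac}\,M\cap M^\perp = 0$, so $tN = 0$ and therefore $N = N/tN\in M^\perp$. This gives $N\in M^\perp$, hence $\mathrm{hom}(M_i,N)=0$ for all $i$; and $\mathrm{hom}(N,\tau M_i)=0$ too because (as in Lemma~\ref{lemtrace}) $N\in M^\perp = \mathrm{Fac}\,M^{\perp}$ implies $\mathrm{Hom}(N,\tau M)=0$ --- more precisely $N\in M^\perp$ gives $\mathrm{hom}(N,\tau M_i)=0$ via the standard fact $M^\perp\subseteq {}^\perp\tau M$ for $M$ $\tau$-rigid, or one re-derives it: actually $\mathrm{Fac}\,M\subseteq {}^\perp\tau M$ is what Lemma~\ref{lemtrace} used, and its torsion-free counterpart $M^\perp\subseteq{}^\perp\tau M$ follows from the Auslander--Reiten formula since $\mathrm{hom}(N,\tau M_i)=\mathrm{hom}(M_i,N)\ - \theta_{M_i}([N])$ and both terms on the right are now controlled. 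Finally, plugging back into $\theta_{\alpha(M,P)}([N])=0$ with the first two sums zero forces $\sum_j\alpha_j\,\mathrm{hom}(P_j,N)=0$, hence $\mathrm{hom}(P_j,N)=0$ for all $j$, i.e.\ $N\in P^\perp$. Thus $N\in{}^\perp\tau M\cap M^\perp\cap P^\perp$.

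\emph{Main obstacle.} The delicate point is the ($\Leftarrow$) direction: showing that for $N$ in the intersection, \emph{every} quotient $N'$ of $N$ has $\theta_{\alpha(M,P)}([N'])\geq 0$. The vanishing of the $\mathrm{hom}(\,\cdot\,,\tau M_i)$-terms on quotients is easy (pull back along the surjection), and the vanishing of $\mathrm{hom}(P_j,N')$ uses projectivity of $P_j$; the surviving sum $\sum_i\alpha_i\,\mathrm{hom}(M_i,N')$ is then manifestly $\geq 0$. I expect the cleanest write-up records these two reductions as a short sublemma before splitting into the two implications, so that both directions reuse the identity $\theta_{\alpha(M,P)}([X]) = \sum_i\alpha_i\,\mathrm{hom}(M_i,X)$ valid for every $X$ in (or a quotient of something in) $^\perp\tau M\cap P^\perp$.
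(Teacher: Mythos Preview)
Your overall strategy matches the paper's, but there is one genuine error and one unnecessary detour.

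\textbf{Detour in ($\Leftarrow$).} Your worry that $\mathrm{hom}(M_i,L)$ can be positive for a submodule $L\subseteq N$ is unfounded: $M^{\perp}$ is closed under submodules (if $f\colon M_i\to L$ is nonzero, composing with the inclusion $L\hookrightarrow N$ gives a nonzero $M_i\to N$). Likewise $P^{\perp}$ is closed under submodules. Hence for any $L\subseteq N$ one has directly
\[
\theta_{\alpha(M,P)}([L]) \;=\; -\sum_{i=1}^{k}\alpha_i\,\mathrm{hom}(L,\tau M_i)\;\le\;0,
\]
which is exactly how the paper argues. Your quotient route is correct, but it is a self-imposed complication.

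\textbf{Error in ($\Rightarrow$).} The inclusion $M^{\perp}\subseteq{}^{\perp}\tau M$ that you invoke is \emph{false} for $\tau$-rigid $M$ in general. For example, over the path algebra of $1\to 2$, take $M=S(1)$ (which is $\tau$-rigid with $\tau S(1)=S(2)$): then $S(2)\in M^{\perp}$ but $S(2)\notin{}^{\perp}\tau M$. Your ``re-derivation'' via $\mathrm{hom}(N,\tau M_i)=\mathrm{hom}(M_i,N)-\theta_{M_i}([N])$ does not help either, since you have no control over $\theta_{M_i}([N])$ at that point.

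The fix is immediate and is what the paper does: once you have $N\in M^{\perp}$ (which you obtained correctly via Lemma~\ref{lemtrace} and $tN=0$), simply plug $\mathrm{hom}(M,N)=0$ into $\theta_{\alpha(M,P)}([N])=0$ to get
\[
\sum_{i=1}^{k}\alpha_i\,\mathrm{hom}(N,\tau M_i)\;+\;\sum_{j=k+1}^{t}\alpha_j\,\mathrm{hom}(P_j,N)\;=\;0.
\]
This is a sum of nonnegative terms with strictly positive coefficients, so each term vanishes, giving $N\in{}^{\perp}\tau M$ and $N\in P^{\perp}$ simultaneously. You do not need (and cannot have) $M^{\perp}\subseteq{}^{\perp}\tau M$ as an intermediate step.
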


\begin{proof}
Suppose that $N\in{ ^{\perp}\tau M}\cap M^{\perp}\cap P^{\perp}$. Then $\emph{hom}(M,N)=\emph{hom}(N,\tau_A M)=\emph{hom}(P,N)=0$. Therefore $\theta_{\alpha(M,P)}([M])=0$ by Corollary \ref{tauformula}. Moreover, for every submodule $L$ of $N$ we have that $\emph{hom}(M,L)=0$ because $\emph{hom}(M,N)=0$. Likewise $\emph{hom}(P,L)=0$. Then $\theta_{\alpha(M,P)}([L])\leq -\sum_{i=1}^k\alpha_i\emph{hom}(N,\tau_A M_i)\leq 0$. Hence $N$ is a $\theta_{\alpha(M,P)}$-semi-stable module. 

Now, suppose that $N$ is a $\theta_{\alpha(M,P)}$-semi-stable module. Then  $\theta_{\alpha(M,P)}([L])\leq 0$ for every submodule $L$ of $N$. In particular $\theta_{\alpha(M,P)}([tN])\leq 0$. Then  $\theta_{\alpha(M,P)}([tN])= 0$ by Lemma \ref{lemtrace}. So
$$\sum_{i=1}^k\alpha_i\emph{hom}(M_i,tN)-\sum_{i=1}^k\alpha_i\emph{hom}(tN,\tau_A M_i)-\sum_{j=k+1}^t\alpha_j\emph{hom}(P_j,tN)=0.$$
Since $tN\in \Fac M$ and $M$ is $\tau$-rigid, we get $\sum_{i=1}^k\alpha_i\emph{hom}(tN,\tau_A M_i)=0$.  Similarly, since $hom(P,M)=0$ and $P$ is projective, we get $\sum_{j=k+1}^t\alpha_j\emph{hom}(P_j,tN)=0$. Consequently, $\sum_{i=1}^k\alpha_i\emph{hom}(M_i,tN)=0$, and $tN=0$.

Therefore $\emph{hom}(M,N)=0$. Then $\theta_{\alpha(M,P)}([N])=0$ implies that $\emph{hom}(N,\tau M)=\emph{hom}(P,N)=0$. Hence $N\in {^{\perp}\tau M}\cap M^{\perp}\cap P^{\perp}$, finishing the proof. 
\end{proof}

With this result in hand we can prove one of our main results. 

\begin{theorem}\label{stablemodcat}
Let $(M,P)$ be a $\tau$-rigid pair and let $\alpha(M,P)\in\Co_{(M,P)}$. Then the category of $\theta_{\alpha(M,P)}$-semi-stable modules is equivalent to the module category of an algebra $C$. Moreover there are exactly $rk(K_0(A))-|M|-|P|$ nonisomorphic $\theta_{\alpha(M,P)}$-stable modules.

\end{theorem}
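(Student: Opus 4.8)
The plan is to identify the category of $\theta_{\alpha(M,P)}$-semi-stable modules, which by Lemma~\ref{catsemi-stables} equals $\mathcal{W}:={}^{\perp}\tau M\cap M^{\perp}\cap P^{\perp}$, with a module category. First I would invoke the Bongartz-type completion from $\tau$-tilting theory: the $\tau$-rigid pair $(M,P)$ can be completed to a $\tau$-tilting pair $(\widetilde M,\widetilde P)$ with $M\mid \widetilde M$ and $P\mid\widetilde P$. Writing $\widetilde M=M\oplus M'$ and $\widetilde P=P\oplus P'$, the indecomposable summands of $M'$ together with the summands of $P'$ number exactly $n-|M|-|P|$. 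The key structural input is that $\mathcal{W}$ is a wide subcategory of $\mod A$ (it is the heart of the relevant interval in the poset of torsion classes, or directly: one checks using Corollary~\ref{tauformula} and the abelianness of King's semistable subcategory that $\mathcal{W}$ is closed under kernels, cokernels and extensions). A wide subcategory of a length category which is moreover functorially finite is equivalent to $\mod C$ for a finite-dimensional algebra $C$; functorial finiteness here follows because $\mathcal{W}$ is cut out by finitely many $\Hom$- and $\Ext$-perpendicularity conditions with respect to $\tau$-rigid objects, so standard perpendicular-category arguments (in the spirit of Geigle--Lenzing and Schofield) apply.

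Next I would count the simple objects of $\mathcal{W}\simeq\mod C$, equivalently $rk(K_0(C))$, which by Remark~\ref{simplestable} (or rather its module-category analogue) equals the number of $\theta_{\alpha(M,P)}$-stable modules. The natural candidates for these simples are the modules associated to the ``missing'' summands: each indecomposable summand $M'_j$ of $M'$ and each indecomposable summand $P'_l$ of $P'$ should contribute one stable object in $\mathcal{W}$. Concretely, for the summand $M'_j$ one takes a minimal left $\mathcal{W}$-approximation or uses the exchange sequences of \cite{AIR}: the almost complete $\tau$-tilting pair obtained by removing the $j$-th summand has exactly two completions, and the ``new'' summand appearing in the second completion, viewed inside $\mathcal{W}$, is $\theta_{\alpha(M,P)}$-stable. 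Dually for the projective summands $P'_l$. Thus one produces $n-|M|-|P|$ pairwise non-isomorphic stable objects; non-isomorphism follows from Theorem~\ref{inj-g-vecteurs} since they have distinct $g$-vectors, and linear independence of the relevant $g$-vectors modulo $\mathcal{C}_{(M,P)}$ (Theorem~\ref{LI}) shows these are exactly the new basis directions, giving the dimension count $rk(K_0(C))=n-|M|-|P|$.

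Finally, to see these are \emph{all} the stable objects, I would argue that any $\theta_{\alpha(M,P)}$-stable module $X$ is a simple object of $\mathcal{W}$, hence its class $[X]$ lies in $K_0(\mathcal{W})$, a rank $n-|M|-|P|$ sublattice of $K_0(A)$; using the formula of Corollary~\ref{tauformula} together with the basis from Theorem~\ref{LI} one shows that $[X]$ is forced to be (the class of) one of the exchange modules constructed above. Alternatively, and perhaps more cleanly, one uses that $\mathcal{W}$, being a functorially finite wide subcategory, is itself of the form $\mod C$ and its number of simples is an invariant that the $g$-vector computation pins down, so no stable object can be missed.

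The main obstacle I expect is establishing that $\mathcal{W}={}^{\perp}\tau M\cap M^{\perp}\cap P^{\perp}$ is functorially finite (equivalently, that it is genuinely $\mod C$ for a finite-dimensional $C$ rather than merely an abelian subcategory), and correctly identifying the simple objects of $\mathcal{W}$ with the exchange modules of the $\tau$-tilting completion. The abelianness is easy from King's theorem; the passage from ``abelian wide subcategory'' to ``module category of a finite-dimensional algebra'' and the explicit description of its simples via the mutation theory of \cite{AIR} is the technical heart, and is where the $\tau$-tilting machinery (Bongartz completion, exchange sequences, and Theorems~\ref{LI} and \ref{inj-g-vecteurs}) does the real work.
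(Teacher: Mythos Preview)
Your outline is correct in spirit but takes a considerably longer route than the paper, and one step is sketched imprecisely. The paper's proof is essentially a two-line citation: once Lemma~\ref{catsemi-stables} identifies the semi-stable category with ${}^{\perp}\tau M\cap M^{\perp}\cap P^{\perp}$, the authors invoke Jasso's $\tau$-tilting reduction \cite[Theorem~3.8 and Setting~3.1]{J}, which already packages the equivalence with $\mod C$, the formula $rk(K_0(C))=rk(K_0(A))-|M|-|P|$, and the bijection between simple $C$-modules and stable $A$-modules. Your argument is essentially an attempt to reprove Jasso's theorem by hand: establishing wideness, then functorial finiteness, then a Morita-type statement, then a count of simples via Bongartz completion and mutation. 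That is a legitimate strategy, and it has the merit of being more self-contained, but it duplicates work that the paper simply imports.

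The one place where your sketch needs correction is the construction of the simples of $\mathcal{W}$. The exchange modules arising from mutating the Bongartz completion at the ``new'' summands are not themselves objects of $\mathcal{W}={}^{\perp}\tau M\cap M^{\perp}\cap P^{\perp}$ in general; rather, in Jasso's reduction the simples of $\mod C$ are obtained as images under the reduction functor (concretely, as torsion-free parts with respect to $\Fac M$ of suitable modules), not as the raw exchange summands. Your alternative ``minimal left $\mathcal{W}$-approximation'' idea is closer to the mark but would need to be made precise. Also, in the standard Bongartz completion of a $\tau$-rigid pair $(M,P)$ one has $\widetilde P=P$, so your $P'$ is zero and all $n-|M|-|P|$ new summands live in $M'$; this does not affect the count but simplifies the bookkeeping. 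If you want to avoid citing \cite{J}, you should follow its actual construction of the simples rather than the mutation heuristic.
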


\begin{proof}
The first part of the statement follows directly from Lemma \ref{catsemi-stables} and \cite[Theorem 3.8]{J}. For the moreover part, it suffices to note that $rk(K_0(C))=rk(K_0(A))-|M|-|P|$ by construction of $C$ (see \cite[Setting 3.1]{J}). Also the functor $F$ of \cite[Theorem 3.8]{J} induces a bijection between the isomorphism classes of $\theta_{\alpha(M,P)}$-stable modules and the isomorphism classes simple $C$-modules. Therefore the number of nonisomorphic $\theta_{\alpha(M,P)}$-stable modules coincide with the rank of $K_0(C)$ the Grothendieck group of mod$C$, finishing the proof.
\end{proof}

%%%%%%%%%%%%%%%%%%%%%%%%%%%%%%%%%%%%%%%%%%%%%%%%%%%%%%%%%%%%%%%%%%%
\subsection{Geometrical consequences of Theorem \ref{stablemodcat}}

In this subsection we use Theorem \ref{stablemodcat} to show in Proposition \ref{chambers} that every $\tau$-tilting pair induces a chamber and give a complete description of them in Corollary \ref{walls&chambers}.

\begin{prop}\label{chambers}
Let $A$ be an algebra. Then the positive cone $\Co_{(M,P)}$ of every $\tau$-tilting pair $(M,P)$ defines a chamber $\mathfrak{C}_{(M,P)}$ in the wall and chamber structure of $A$.
\end{prop}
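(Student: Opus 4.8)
The plan is to show that the positive cone $\Co_{(M,P)}$ is contained in a single chamber of $\mathfrak{R}$, i.e.\ that it is an open subset of dimension $n$ that avoids the closure of every wall. First I would observe that $\Co_{(M,P)}$ has dimension $n$: since $(M,P)$ is a $\tau$-tilting pair, Theorem~\ref{LI} tells us that the $g$-vectors $\{g^{M_1},\dots,g^{M_k},-g^{P_{k+1}},\dots,-g^{P_n}\}$ form a basis of $\mathbb{Z}^n$, hence are $\mathbb{R}$-linearly independent, so their positive cone is a genuinely $n$-dimensional open convex cone in $(\mathbb{R}^n)^*$ (after the identification via $\Phi$). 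It is also connected, being convex.

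Next, the key point is that $\Co_{(M,P)}\subseteq \mathfrak{R}$, that is, for every $\theta=\theta_{\alpha(M,P)}$ with $\alpha(M,P)\in\Co_{(M,P)}$, the only $\theta$-semi-stable module is the zero module. This follows immediately from Theorem~\ref{stablemodcat}: the category of $\theta_{\alpha(M,P)}$-semi-stable modules is equivalent to $\mod C$ where $rk(K_0(C)) = rk(K_0(A)) - |M| - |P| = n - n = 0$, so $\mod C$ is the zero category and there are no nonzero $\theta_{\alpha(M,P)}$-semi-stable modules. In particular $\alpha(M,P)$ lies in no wall $\D(N)$ for $N\neq 0$, so $\Co_{(M,P)}\subseteq (\mathbb{R}^n)^*\setminus\bigcup_{M\in\mod A}\D(M)$.

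Then I would promote this to the \emph{open} set $\mathfrak{R}=(\mathbb{R}^n)^*\setminus\overline{\bigcup_M \D(M)}$. Because $\Co_{(M,P)}$ is open and disjoint from $\bigcup_M\D(M)$, and each $\D(M)$ is a closed subset of the hyperplane $\ker\varphi_M$, one argues that $\Co_{(M,P)}$ meets no accumulation point of $\bigcup_M\D(M)$ either: the union of walls is locally finite away from the origin in the $\tau$-tilting setting, or more directly, an open set disjoint from a closed set is disjoint from its closure, so $\Co_{(M,P)} \cap \overline{\bigcup_M\D(M)} = \emptyset$ once we know $\bigcup_M\D(M)$ is closed (or handle the closure directly). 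Hence $\Co_{(M,P)}\subseteq\mathfrak{R}$, and being a connected subset of dimension $n$, it is contained in a unique connected component of dimension $n$, namely a chamber $\mathfrak{C}_{(M,P)}$. I would take $\mathfrak{C}_{(M,P)}$ to be that component.

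The main obstacle is the passage to the closure: $\bigcup_{M\in\mod A}\D(M)$ is a union of (pieces of) infinitely many hyperplanes, so it need not be closed a priori, and one must rule out that a point of $\Co_{(M,P)}$ is a limit of points lying on walls of modules of ever-larger dimension. The cleanest way around this is to use that $\Co_{(M,P)}$ is open and that each wall-free point has an open neighbourhood on which $\theta$ has no semi-stable modules — concretely, for $\theta_0=\theta_{\alpha(M,P)}$ one shows, using Corollary~\ref{tauformula} and the rigidity of $(M,P)$, that the conditions defining semi-stability are open, so a whole neighbourhood of $\theta_0$ consists of functionals with no nonzero semi-stable module; this forces $\theta_0\notin\overline{\bigcup_M\D(M)}$. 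Since every point of $\Co_{(M,P)}$ is of this form, $\Co_{(M,P)}\subseteq\mathfrak{R}$, completing the proof.
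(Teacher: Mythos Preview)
Your argument establishes only that $\Co_{(M,P)}$ is \emph{contained} in some chamber, but the proposition (as the paper uses it immediately afterwards) asserts that $\Co_{(M,P)}$ \emph{is} a chamber. The paper's proof has an additional step you are missing: for any point $\beta(M,P)$ on the boundary $\partial\Co_{(M,P)}$ some coefficient $\beta_i$ vanishes, so $\theta_{\beta(M,P)}$ corresponds to an almost $\tau$-tilting pair and Theorem~\ref{stablemodcat} produces a nonzero $\theta_{\beta(M,P)}$-stable module $N$. Thus $\partial\Co_{(M,P)}\subseteq\bigcup_N\D(N)$, so $\partial\Co_{(M,P)}\cap\mathfrak{R}=\emptyset$. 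Since the chamber $\mathfrak{C}$ is connected and $\Co_{(M,P)}$ is a nonempty open subset whose boundary does not meet $\mathfrak{C}$, one concludes $\Co_{(M,P)}=\mathfrak{C}$. Without this step you cannot identify $\Ch_{(M,P)}$ with the cone, and the later results (the description of the $n$ surrounding walls in Corollary~\ref{walls&chambers}, the injectivity of $(M,P)\mapsto\Ch_{(M,P)}$) would not follow.

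On the closure issue you identify as the ``main obstacle'': you are making it harder than it is, and your phrasing slips into circularity. The correct elementary fact is that if $U$ is open and $U\cap S=\emptyset$, then $U\cap\overline{S}=\emptyset$ (because $\mathbb{R}^n\setminus U$ is a closed set containing $S$, hence containing $\overline{S}$). You have already shown that the open set $\Co_{(M,P)}$ is disjoint from $\bigcup_M\D(M)$; this immediately gives $\Co_{(M,P)}\cap\overline{\bigcup_M\D(M)}=\emptyset$, so $\Co_{(M,P)}\subseteq\mathfrak{R}$. There is no need to argue that the union of walls is closed, nor to invoke local finiteness or openness of semi-stability conditions.
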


\begin{proof}
Let $(M,P)$ be a $\tau$-tilting pair and $\alpha(M,P)\in\Co_{(M,P)}$. Then Theorem \ref{stablemodcat} implies that the category of $\theta_{\alpha(M,P)}$-semi-stable modules consist only on the zero object. Therefore $\theta_{(M,P)}$ belongs to a chamber $\mathfrak{C}$. Moreover every vector in $\Co_{(M,P)}$ belongs to the same chamber $\mathfrak{C}$ because $\Co_{(M,P)}$ is connected. Then the positive cone of $(M,P)$ is a subset of a some chamber $\mathfrak{C}$. 

Moreover, for every vector $\beta(M,P)$ in the boundary $\partial\Co_{(M,P)}$ of $\Co_{(M,P)}$ there exists an $1\leq i\leq n$ such that $\beta_i=0$. Hence, Theorem \ref{stablemodcat} yields the existence of a $\theta_{\beta(M-P)}$-stable module implying that the positive cone coincides with $\mathfrak{C}$. 
\end{proof} 

In view of the previous result, the positive cone $\Co_{(M,P)}$ of a $\tau$-tilting pair $(M,P)$ will be referred as the chamber induced by $(M,P)$ and denoted by $\Ch_{(M,P)}$.

Another immediate consequence of Theorem \ref{stablemodcat} is the following.

\begin{cor}
Let $(M,P)$ be an almost $\tau$-tilting pair. Then $\Co_{(M,P)}$ has codimension 1 and is included in a wall of the wall and chamber structure of \emph{mod}$A$.
\qed
\end{cor}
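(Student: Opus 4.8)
The plan is to mimic the proof of Proposition~\ref{chambers}, adapting the dimension count to the case $|M|+|P|=n-1$. First I would pick a generic vector $\alpha(M,P)\in\Co_{(M,P)}$. By Theorem~\ref{LI} (or rather its $\tau$-rigid analogue, since an almost $\tau$-tilting pair is in particular $\tau$-rigid with $|M|+|P|=n-1$ linearly independent $g$-vectors), the set $\{g^{M_1},\dots,g^{M_k},-g^{P_{k+1}},\dots,-g^{P_{n-1}}\}$ consists of $n-1$ linearly independent vectors, so $\Co_{(M,P)}$ is an open cone of dimension $n-1$ inside an $(n-1)$-dimensional subspace; hence it has codimension $1$ in $(\mathbb{R}^n)^*$.

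Next I would show that $\Co_{(M,P)}$ is contained in a wall. By Theorem~\ref{stablemodcat}, the category of $\theta_{\alpha(M,P)}$-semi-stable modules is equivalent to $\mod C$ for an algebra $C$ with $rk(K_0(C))=rk(K_0(A))-|M|-|P|=n-(n-1)=1$. A one-dimensional Grothendieck group means $C$ has (up to isomorphism) a single simple module, so there is exactly one $\theta_{\alpha(M,P)}$-stable module, call it $S$. In particular $S$ is a nonzero $\theta_{\alpha(M,P)}$-semi-stable module, so $\alpha(M,P)\in\D(S)$. Since this holds for every $\alpha(M,P)\in\Co_{(M,P)}$ — the stable module $S$ is the same for all generic vectors in the cone, as the equivalence of Theorem~\ref{stablemodcat} and the construction of $C$ only depend on the pair $(M,P)$, not on the choice of interior point — we obtain $\Co_{(M,P)}\subseteq\D(S)$. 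The stability space $\D(S)$ lies inside the hyperplane $\ker\varphi_S$; since $\Co_{(M,P)}\subseteq\D(S)$ already has dimension $n-1$, we conclude $\D(S)$ has codimension exactly $1$, i.e. $\D(S)$ is a wall, and it contains $\Co_{(M,P)}$.

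The main obstacle is the bookkeeping around the phrase "$S$ is the same for all generic vectors of the cone": one must be slightly careful that the equivalence in Theorem~\ref{stablemodcat}, hence the resulting unique stable module, is genuinely independent of the choice of $\alpha(M,P)\in\Co_{(M,P)}$. This follows from Lemma~\ref{catsemi-stables}, which identifies the semi-stable category with $^{\perp}\tau M\cap M^{\perp}\cap P^{\perp}$ — a subcategory depending only on $(M,P)$ and not on the chosen interior vector — so the unique simple object $S$ of this category is well-defined. With that observation in place the codimension-$1$ claim and the containment in a wall both follow immediately, and the corollary is proved.
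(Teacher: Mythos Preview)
Your proof is correct and follows exactly the approach the paper intends: the corollary is stated as an immediate consequence of Theorem~\ref{stablemodcat}, and you have simply spelled out the details (the linear independence of the $g$-vectors giving codimension~$1$, the single stable module $S$ coming from $rk(K_0(C))=1$, and the use of Lemma~\ref{catsemi-stables} to see that $S$ is independent of the choice of $\alpha(M,P)$). There is nothing to add.
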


The previous corollary shows that the positive cones of almost $\tau$-tilting pairs are included in a wall. However it gives us no information about the module defining that wall. To construct it explicitly, we recall (see \cite[Theorem 2.18]{AIR}) that for every almost $\tau$-tilting pair $(M,P)$ there exist exactly two $\tau$-tilting pairs $(M_1,P_1)$ and $(M_2,P_2)$ such that $M$ is a direct factor of each $M_i$ and $P$ is a direct factor of each $P_i$. Moreover one of them, that we can suppose without loss of generality to be $M_1$, is such that $\text{Fac}M=\text{Fac}M_1$. On the other hand, $\text{Fac} M\subsetneq\text{Fac}M_2$. Therefore there exists an indecomposable $\tau$-rigid module $M'$ such that $M_2=M'\oplus M$. 

\begin{prop}\label{walls}
Let $(M,P)$ be an almost $\tau$-tilting pair let $M'$ as above. Then $\Co_{(M,P)}$ is included in the wall $\D(N)$, where $N$ is the cokernel of the right \emph{add}$M$-approximation of $M'$. 
\end{prop}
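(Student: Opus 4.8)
The plan is to show that the module $N = \coker(f)$, where $f\colon M_0 \to M'$ is a right $\add M$-approximation of $M'$, lies in the stability space $\D(\beta(M,P))$ for every $\beta(M,P)$ in the codimension-one cone $\Co_{(M,P)}$, and then invoke the previous corollary to conclude that this cone \emph{is} the wall $\D(N)$. The key input is Lemma \ref{catsemi-stables}: since $(M,P)$ is a $\tau$-rigid pair and $\beta(M,P)\in\Co_{(M,P)}$, a module $X$ is $\theta_{\beta(M,P)}$-semi-stable precisely when $X\in{}^{\perp}\tau M\cap M^{\perp}\cap P^{\perp}$. So the whole proof reduces to the three membership statements $N\in M^{\perp}$, $N\in P^{\perp}$ and $N\in{}^{\perp}\tau M$.

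First I would record the defining exact sequence $M_0 \xrightarrow{f} M' \xrightarrow{g} N \to 0$ coming from the right $\add M$-approximation, and note that by \cite[Theorem 2.18]{AIR} and \cite[Theorem 5.10]{ASfuncfiniteFac}, since $M_2 = M'\oplus M$ is $\tau$-tilting, $\Fac(M'\oplus M)$ is a torsion class strictly containing $\Fac M$, and $N$ being a quotient of $M'$ sits in $\Fac M_2$. The step $N\in M^{\perp}$ is the heart of the matter: applying $\Hom_A(M,-)$ to the sequence and using that $f$ is a right $\add M$-approximation, every map $M\to M'$ factors through $f$, so $\Hom_A(M,g)$ is forced to be zero; combined with right-exactness of $\Hom_A(M,-)$ applied to $M' \to N \to 0$ this gives $\Hom_A(M,N)=0$. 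For $N\in P^{\perp}$: since $N\in\Fac M_2$ and there is an epimorphism $M_2^{\,r}\twoheadrightarrow N$, projectivity of $P$ makes any map $P\to N$ factor through $M_2^{\,r}=M'^{\,r}\oplus M^{\,r}$; the component into $M^{\,r}$ vanishes because $\Hom_A(P,M)=0$ (part of $\tau$-rigidity), and the component into $M'^{\,r}$ vanishes because $\Hom_A(P,M')=0$ as $(M_2,P)$ is $\tau$-tilting, so $\Hom_A(P,N)=0$. Finally $N\in{}^{\perp}\tau M$: here I would use that $N\in\Fac M \cdot$— wait, $N$ need not be in $\Fac M$, only in $\Fac M_2$. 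Instead, apply $\Hom_A(-,\tau M)$ to $M' \xrightarrow{g} N \to 0$; this is left exact, giving $\Hom_A(N,\tau M)\hookrightarrow \Hom_A(M',\tau M)$, and the latter is zero because $M'\oplus M = M_2$ is $\tau$-rigid, hence $\Hom_A(M'\oplus M,\tau(M'\oplus M))=0$ which in particular kills $\Hom_A(M',\tau M)$. Therefore $N\in{}^{\perp}\tau M$.

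Having established $N\in{}^{\perp}\tau M\cap M^{\perp}\cap P^{\perp}$, Lemma \ref{catsemi-stables} gives that $N$ is $\theta_{\beta(M,P)}$-semi-stable for every $\beta(M,P)\in\Co_{(M,P)}$, i.e.\ $\Co_{(M,P)}\subseteq\D(N)$. By the preceding corollary $\Co_{(M,P)}$ is a codimension-one subset contained in some wall, and $\D(N)$ has codimension at most one and contains this codimension-one cone; hence $\D(N)$ has codimension exactly one and $\Co_{(M,P)}$ is (an open dense subset of) the wall $\D(N)$, as claimed. The main obstacle is the verification that $N$ is \emph{nonzero} and that it defines a genuine wall rather than degenerating: one should check $N\ne 0$, which follows because $\Fac M\subsetneq\Fac M_2$ forces $M'\notin\Fac M$, so the $\add M$-approximation $f$ cannot be an epimorphism, whence $N=\coker f\ne 0$; combined with $\Co_{(M,P)}\subseteq\D(N)$ and the codimension-one corollary, this pins down $\D(N)$ as the desired wall.
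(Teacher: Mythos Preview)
Your proposal is correct and follows essentially the same approach as the paper's proof: both establish $N\in{}^{\perp}\tau M\cap M^{\perp}\cap P^{\perp}$ via the $\tau$-rigidity of $(M\oplus M',P)$ and the defining property of the $\add M$-approximation, then invoke Lemma~\ref{catsemi-stables} and the codimension count. The only minor difference is that the paper handles $N\in P^{\perp}$ more directly---since $N$ is already a quotient of $M'$ and $\Hom_A(P,M')=0$, projectivity of $P$ gives the vanishing immediately, without passing through $\Fac M_2$---and your explicit check that $N\neq 0$ is a nice addition the paper leaves implicit.
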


\begin{proof}
Let $M'$ be as in the hypothesis and consider the short exact sequence induced by the right add$M$-approximation as follows.
$$0\rightarrow tM'\overset{i}\rightarrow M'\overset{p}\rightarrow N\rightarrow 0$$
Note that, by construction, Hom$_A(M,N)=0$. 

On the other hand, $\Hom_A(P,M')=\Hom_A(M',\tau M)=0$ because $(M\oplus M',P)$ is a $\tau$-tilting pair. The projectivity of $P$ implies that $\Hom_A(P,N)=0$. Moreover, any morphism $f:N\ra \tau M$ induces a morphism $fp:M'\ra \tau M$, hence $\Hom_A(N,\tau M)=0$.

Since $N$ belongs to $ {^{\perp}\tau M}\cap P^{\perp}\cap M^{\perp}$, we know that $N$ is a $\theta_{\alpha(M,P)}$-stable module. Hence the positive cone induced by an almost $\tau$-tilting pair is included in the stability space $\D(N)$. Moreover, the positive cone has codimension 1, implying that $\D(N)$ is actually a wall.
\end{proof}

As a corollary, we get the following result, which completely describes  the chamber induced by a $\tau$-tilting pair $(M,P)$.

\begin{cor}\label{walls&chambers}
Let $(M,P)$ be a $\tau$-tilting pair. Then $(M,P)$ induces a chamber $\Ch_{(M,P)}$ having exactly $n$ walls $\{\D(N_1), \dots, \D(N_n)\}$. Moreover, the stable modules $\{N_1, \dots, N_n\}$ can be calculated explicitly from $(M,P)$.
\end{cor}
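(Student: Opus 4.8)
The plan is to combine Proposition \ref{chambers} (every $\tau$-tilting pair induces a chamber), Proposition \ref{walls} (identification of the wall containing the positive cone of an almost $\tau$-tilting pair), and the mutation theory of $\tau$-tilting pairs from \cite{AIR}. The starting point is that a chamber is an open $n$-dimensional region, and its boundary $\partial\Ch_{(M,P)}$ decomposes into codimension-one faces. Since $\Ch_{(M,P)} = \Co_{(M,P)}$ is the positive cone generated by the basis $\{g^{M_1},\dots,g^{M_k},-g^{P_{k+1}},\dots,-g^{P_n}\}$ of $\mathbb{R}^n$ (Theorem \ref{LI}), it is a simplicial cone, and hence has exactly $n$ codimension-one facets: for each $\ell \in \{1,\dots,n\}$, the facet $F_\ell$ obtained by setting the $\ell$-th barycentric coordinate $\alpha_\ell$ to $0$ while keeping the others strictly positive.

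Next I would identify each facet $F_\ell$ with the positive cone of an almost $\tau$-tilting pair. Dropping the $\ell$-th summand from $(M,P)$ yields an almost $\tau$-tilting pair $(M^{(\ell)},P^{(\ell)})$ with $|M^{(\ell)}|+|P^{(\ell)}| = n-1$, and by definition $\Co_{(M^{(\ell)},P^{(\ell)})}$ is exactly the relative interior of $F_\ell$. By the corollary preceding Proposition \ref{walls}, $\Co_{(M^{(\ell)},P^{(\ell)})}$ has codimension one and lies in a wall; and by Proposition \ref{walls} that wall is $\D(N_\ell)$, where $N_\ell$ is the cokernel of the right $\add M^{(\ell)}$-approximation of the indecomposable $\tau$-rigid module $M'_\ell$ determined by the mutation of $(M,P)$ in direction $\ell$ (using \cite[Theorem 2.18]{AIR} on the two completions of an almost $\tau$-tilting pair). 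This gives the list $\{\D(N_1),\dots,\D(N_n)\}$, and since the $N_\ell$ are produced by an explicit recipe — mutate $(M,P)$ at $\ell$, take the approximation sequence, read off the cokernel — they can be computed explicitly from $(M,P)$, giving the ``moreover'' clause.

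It then remains to argue that these $n$ walls are \emph{all} the walls bordering $\Ch_{(M,P)}$, i.e. that $\overline{\Ch_{(M,P)}} \cap \overline{\mathfrak{R}^c}$ is precisely $\bigcup_{\ell=1}^n F_\ell$ and no wall meets the interior. That no wall meets the interior is exactly Proposition \ref{chambers}: the interior consists of functionals $\theta_{\alpha(M,P)}$ with all barycentric coordinates positive, for which Theorem \ref{stablemodcat} gives no nonzero semi-stable module, so the interior lies in the open set $\mathfrak{R}$. Conversely, the topological boundary of a simplicial cone is the union of its $n$ facets, and we have just shown each facet lies in the wall $\D(N_\ell)$; so the chamber is bordered by exactly these walls. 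The main obstacle I anticipate is the bookkeeping in the last step — making precise that ``the walls bordering $\Ch_{(M,P)}$'' means the walls containing a codimension-one piece of $\partial\Ch_{(M,P)}$, and checking that distinct facets indeed give the relevant data (the $N_\ell$ need not be pairwise non-isomorphic as modules a priori, but the walls $\D(N_\ell)$ are distinct since they are distinct hyperplanes, being spanned by distinct facets of the simplicial cone). Everything else is assembling results already proved.
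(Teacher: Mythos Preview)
Your proposal is correct and follows essentially the same route as the paper: both combine Proposition \ref{chambers}, Proposition \ref{walls}, and the fact that the $g$-vectors form a basis (Theorem \ref{LI}) to conclude that the $n$ facets of the simplicial cone $\Co_{(M,P)}$ are exactly the walls bordering the chamber, with the stable modules $N_\ell$ computed via the approximation sequences. Your write-up is somewhat more careful than the paper's about the topological bookkeeping (what it means for a wall to border the chamber, and why distinct facets give distinct walls), but the underlying argument is the same.
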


\begin{proof}
Suppose that $M=\bigoplus_{i=1}^k M_i$ and $P=\bigoplus_{j=k+1}^tP_j$. The fact that $(M,P)$ induces a chamber follows from Proposition~\ref{chambers}. Moreover, we know from Proposition \ref{walls} that each of the $n$ almost $\tau$-tilting pairs that can be derived from $(M,P)$ are included in a wall. It was also shown in Proposition \ref{walls} an explicit way to calculate the stable modules defining those walls. Finally, the fact that every $\D(N_i)$ does not coincide with $\D(N_j)$ if $i\neq j$ follows from the fact that the set of $g$-vectors $\{g^{M_1}, \dots, g^{M_k}, -g^{P_{k+1}}, \dots, -g^{P_{t}}\}$ forms a basis, as shown in \cite[Theorem 5.1]{AIR}.
\end{proof}

\begin{rmk}
Not every wall is generated by the positive cone of some almost $\tau$-tilting pair. For instance, take the hereditary algebra of the Kronecker quiver $\xymatrix{1\ar@<0.5 ex>[r]\ar@<-0.5 ex>[r]&2}$ and consider the wall $\mathfrak{D}=\{\lambda(1,-1):\lambda>0\}$. In this case there is no $\tau$-rigid module $M$ such that $g^M=(1,-1)$.
Note that the wall $\mathfrak{D}$ has no adjacent chamber, but rather is a limit of walls given by preprojective (or preinjective) modules.

Also the number of positive cones defined by almost $\tau$-tilting pairs included in a given wall is not constant. For instance $\D(\rep{1\\22})=\{\lambda(g^{\rep{11\\222}}): \lambda\in\mathbb{R}_{\geq 0}\}$ while $\D(\rep{2})=\{\lambda(g^{\rep{1\\22}}): \lambda\in\mathbb{R}_{\geq 0}\}\cup\{\lambda(-g^{\rep{1\\22}}): \lambda\in\mathbb{R}_{\geq 0}\}$
\end{rmk}

The next example is intended to be an illustration of our previous propositions.

\begin{ex}
Consider the wall and chamber structure of the algebra $\mathbb{A}_2$ as we did in Example \ref{runningexample}. In Figure \ref{tauw&cA_2} we can see how positive cones of $\tau$-tilting pairs coincides with chambers and how positive cones of almost $\tau$-tilting pairs are included in walls of the wall and chamber structure of $\mathbb{A}_2$.

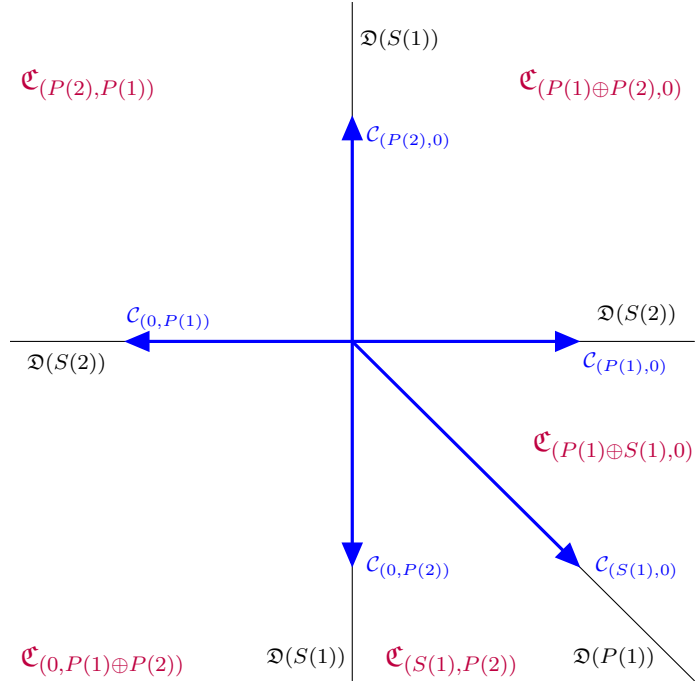
\begin{figure}
			\begin{center}
				\begin{tikzpicture}[line cap=round,line join=round,>=triangle 45,x=3.0cm,y=3.0cm]
					\clip(-1.5,-1.5) rectangle (1.5,1.5);
						\draw (0.,0.) -- (0.,1.5);
						\draw (0.,0.) -- (0.,-1.5);
						\draw [domain=-1.5:0.0] plot(\x,{(-0.-0.*\x)/-1.});
						\draw [domain=0.0:1.5] plot(\x,{(-0.-1.*\x)/1.});
						\draw [domain=0.0:1.5] plot(\x,{(-0.-0.*\x)/1.});
						\draw [->,line width=1.2pt,color=blue] (0.,0.) -- (1.,0.);
						\draw [->,line width=1.2pt,color=blue] (0.,0.) -- (0.,1.);
						\draw [->,line width=1.2pt,color=blue] (0.,0.) -- (-1.,0.);
						\draw [->,line width=1.2pt,color=blue] (0.,0.) -- (0.,-1.);
						\draw [->,line width=1.2pt,color=blue] (0.,0.) -- (1.,-1.);
						\draw[color=purple] (1.5,1.25) node[anchor=north east] {$\Ch_{(P(1)\oplus P(2),0)}$};
						\draw[color=purple] (-1.5,1.25) node[anchor=north west] {$\Ch_{(P(2),P(1))}$};
						\draw[color=purple] (1.55,-0.35) node[anchor=north east] {$\Ch_{(P(1)\oplus S(1), 0)}$};
						\draw[color=purple] (0.1,-1.3) node[anchor=north west] {$\Ch_{(S(1),P(2))}$};
						\draw[color=purple] (-1.5,-1.3) node[anchor=north west] {$\Ch_{(0,P(1)\oplus P(2))}$};
					\begin{scriptsize}
						\draw[color=black] (0,1.25) node[anchor= south west] {$\mathfrak{D}(S(1))$};
						\draw[color=black] (-0.2,-1.4) node {$\mathfrak{D}(S(1))$};
						\draw[color=black] (-1.25,-0.1) node {$\mathfrak{D}(S(2))$};
						\draw[color=black] (1.15,-1.4) node {$\mathfrak{D}(P(1))$};
						\draw[color=black] (1.25,0.12) node {$\mathfrak{D}(S(2))$};
						\draw[color=blue] (1.2,-0.1) node {$\Co_{(P(1),0)}$};
						\draw[color=blue] (0.25,0.9) node {$\Co_{(P(2),0)}$};
						\draw[color=blue] (-0.8,0.1) node {$\Co_{(0,P(1))}$};
						\draw[color=blue] (0.25,-1.0) node {$\Co_{(0,P(2))}$};
						\draw[color=blue] (1.25,-1.0) node {$\Co_{(S(1),0)}$};
					\end{scriptsize}
				\end{tikzpicture}
			\end{center}
			\caption{Wall and chamber structure for $\mathbb{A}_2$}
            \label{tauw&cA_2}
		\end{figure}
\end{ex}

%%%%%%%%%%%%%%%%%%%%%%%%%%%%%%%%%%%%%%%%%%%%%%%%%%%%%%%%%%%%%%%%%%%
\subsection{Torsion classes associated to chambers}

As we proved in the last subsection, every $\tau$-tilting pair $(M,P)$ defines a chamber $\Ch_{(M,P)}$ in the wall and chamber structure. In this subsection we associate to a given chamber a torsion class $\T_{\Ch}$ and we show that $\Fac M=\T_{\Ch_{(M,P)}}$ if the chamber $\Ch_{(M,P)}$ is defined by a $\tau$-tilting pair $(M,P)$.

Recall that Bridgeland associated in \cite[Lemma 6.6]{B16} a torsion class $\T_{\theta}$ to every functional $\theta$ as follows.
$$\T_{\theta}=\{M\in \mod A:\theta(N)>0 \text{ for every nonzero quotient $N$ of $M$}\}$$

The next proposition give us a natural torsion class associated to a given chamber.

\begin{prop}
Let $\Ch$ be a chamber and consider $\T_{\Ch}=\bigcap\limits_{\theta\in\Ch}\T_{\theta}$. Then $\T_{\Ch}$ is a torsion class.
\end{prop}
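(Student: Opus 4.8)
The statement to prove is that for a chamber $\Ch$, the subcategory $\T_{\Ch}=\bigcap_{\theta\in\Ch}\T_{\theta}$ is a torsion class. Since each $\T_{\theta}$ is already known to be a torsion class by Bridgeland's \cite[Lemma 6.6]{B16}, the essential content is that an \emph{intersection} of torsion classes is again a torsion class. The plan is therefore to verify directly the two closure properties characterizing torsion classes (closure under quotients and closure under extensions), recalled at the start of Section~\ref{Sect:torsion}, and to observe that each of these properties is preserved under arbitrary intersections.

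\textbf{Key steps.} First I would recall that $\T_{\Ch}=\bigcap_{\theta\in\Ch}\T_{\theta}$ where, by \cite[Lemma 6.6]{B16}, each $\T_{\theta}$ is a torsion class in $\mod A$; in particular each $\T_{\theta}$ is closed under quotients and under extensions. Second, for closure under quotients: if $M\in\T_{\Ch}$ and $M\twoheadrightarrow M'$ is an epimorphism, then $M\in\T_{\theta}$ for every $\theta\in\Ch$, so $M'\in\T_{\theta}$ for every $\theta\in\Ch$ since $\T_{\theta}$ is closed under quotients, whence $M'\in\T_{\Ch}$. Third, for closure under extensions: given a short exact sequence $0\to L\to M\to N\to 0$ with $L,N\in\T_{\Ch}$, then for each fixed $\theta\in\Ch$ we have $L,N\in\T_{\theta}$, and since $\T_{\theta}$ is closed under extensions, $M\in\T_{\theta}$; as this holds for all $\theta\in\Ch$, we conclude $M\in\T_{\Ch}$. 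Putting these together, $\T_{\Ch}$ is closed under quotients and extensions, hence a torsion class.

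\textbf{Main obstacle.} There is essentially no obstacle here: the argument is a routine "intersection of torsion classes is a torsion class" verification, and the only thing to be careful about is that the intersection is nonempty as a subcategory (it always contains the zero module, so $\T_{\Ch}$ is a genuine full subcategory) and that one is allowed to intersect over the possibly infinite index set $\Ch$ — which causes no difficulty since the closure conditions are universally quantified over $\theta$. If one wanted a cleaner phrasing, one could alternatively invoke the well-known fact that the lattice of torsion classes of $\mod A$ is closed under arbitrary intersections, and cite that directly; but spelling out the two-line check as above keeps the proof self-contained.
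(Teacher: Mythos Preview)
Your proposal is correct and follows essentially the same approach as the paper: the paper's proof simply invokes \cite[Lemma~6.6]{B16} together with the fact that an arbitrary intersection of torsion classes is again a torsion class. You have merely spelled out that latter fact explicitly by checking closure under quotients and extensions, which is fine.
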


\begin{proof}
This follows directly from \cite[Lemma 6.6]{B16} and the fact that an arbitrary intersection of torsion classes is again a torsion class.
\end{proof}

\begin{prop}\label{eqtorsionch}
Let $(M,P)$ be a $\tau$-tilting pair, $\Ch_{(M,P)}$ its induced chamber and $\D(N_1)$, $\D(N_2)$, $\dots, \D(N_n)$ be the walls surrounding $\Ch_{(M,P)}$. Then $\T_{\Ch_{(M,P)}}=\emph{Fac}M$. Moreover, if we define $\mathcal{N}$ as
$$\mathcal{N}=\{N_i:\theta(N_i)>0 \text{ for all $\theta\in\Ch_{(M,P)}$}\}$$
then $\Fac M=T(\mathcal{N})$, where $T(\mathcal{N})$ is the minimal torsion class containing $\mathcal{N}$.
\end{prop}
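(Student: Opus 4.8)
The plan is to prove the two equalities in turn, first establishing $\T_{\Ch_{(M,P)}}=\Fac M$ and then deducing $\Fac M = T(\mathcal N)$.

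\textbf{Step 1: $\Fac M \subseteq \T_{\Ch_{(M,P)}}$.} Fix $\theta = \theta_{\alpha(M,P)} \in \Ch_{(M,P)}$ and let $X \in \Fac M$. I must show $\theta(Y) > 0$ for every nonzero quotient $Y$ of $X$. Since $X \in \Fac M = \Fac M_1 \oplus \cdots \oplus \Fac M_k$ and $\Fac M$ is closed under quotients (it is a torsion class by \cite[Theorem 5.10]{ASfuncfiniteFac}), every nonzero quotient $Y$ of $X$ again lies in $\Fac M$, hence $Y = tY$ in the torsion pair $(\Fac M, M^\perp)$. By the computation in the proof of Lemma~\ref{lemtrace}, $\theta_{\alpha(M,P)}([tY]) = \sum_{i=1}^k \alpha_i\,\hom(M_i,tY)$, and since $Y \neq 0$ lies in $\Fac M$ there is a surjection $M^j \twoheadrightarrow Y$, so $\hom(M_i, Y) \neq 0$ for at least one $i$; as all $\alpha_i > 0$ this forces $\theta(Y) > 0$. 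Thus $X \in \T_\theta$ for every $\theta \in \Ch_{(M,P)}$, so $X \in \T_{\Ch_{(M,P)}}$.

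\textbf{Step 2: $\T_{\Ch_{(M,P)}} \subseteq \Fac M$.} Suppose $X \notin \Fac M$. Then in the canonical sequence $0 \to tX \to X \to X/tX \to 0$ the cokernel $X/tX$ is a nonzero quotient of $X$ lying in $M^\perp$, so $\hom(M, X/tX) = 0$. I want to produce some $\theta \in \Ch_{(M,P)}$ with $\theta(X/tX) \le 0$, which shows $X \notin \T_\theta$ and hence $X \notin \T_{\Ch_{(M,P)}}$. Writing $Y = X/tX$, Corollary~\ref{tauformula} gives $\theta_{\alpha(M,P)}([Y]) = \sum_{i=1}^k \alpha_i\hom(M_i,Y) - \sum_{i=1}^k \alpha_i\hom(Y,\tau M_i) - \sum_{j=k+1}^t \alpha_j\hom(P_j,Y) = -\sum_i \alpha_i\hom(Y,\tau M_i) - \sum_j \alpha_j\hom(P_j,Y) \le 0$ for \emph{every} choice of the positive coefficients $\alpha_i$, since the first sum vanishes. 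Thus $\theta(Y) \le 0$ for all $\theta \in \Ch_{(M,P)}$, so $Y \notin \T_\theta$, giving $X \notin \T_\theta$. This proves the reverse inclusion and completes $\T_{\Ch_{(M,P)}} = \Fac M$.

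\textbf{Step 3: $\Fac M = T(\mathcal N)$.} By Corollary~\ref{walls&chambers} the walls surrounding $\Ch_{(M,P)}$ are $\D(N_1),\dots,\D(N_n)$, each $N_i$ stable and computed as the cokernel of a right $\add M$-approximation; in particular each $N_i$ lies in $M^\perp$ (by the proof of Proposition~\ref{walls}, $\Hom_A(M,N_i)=0$). For those $N_i$ with $\theta(N_i) > 0$ on all of $\Ch_{(M,P)}$, the argument of Step 1 (using the torsion pair decomposition of $N_i$, or directly Corollary~\ref{tauformula}) together with $N_i \in {}^\perp\tau M$ shows $N_i \in \Fac M$: indeed if $\hom(M,N_i)=0$ then $\theta(N_i) = -\sum\alpha_i\hom(N_i,\tau M_i)-\sum\alpha_j\hom(P_j,N_i) \le 0$, contradicting $\theta(N_i)>0$ unless in fact $\hom(M,N_i)\neq 0$ — so the defining condition of $\mathcal N$ forces $N_i \in \Fac M$ after all. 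Hence $\mathcal N \subseteq \Fac M$, and since $\Fac M$ is a torsion class, $T(\mathcal N) \subseteq \Fac M$. For the reverse inclusion one shows $\Fac M$ is generated as a torsion class by the $N_i \in \mathcal N$: the $N_i$ in $\mathcal N$ are precisely the "positive" generators associated to the Ext-projective modules of $\Fac M$ (the indecomposable $\Ext$-projectives of $\Fac M$ are the $M_i$, and each $M_i$ is a quotient of, or built from, the relevant $N_i$ via the approximation sequences), so $\Fac M = \Fac(M_1 \oplus \cdots \oplus M_k) \subseteq T(\mathcal N)$.

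\textbf{Main obstacle.} Steps 1 and 2 are essentially repackagings of Lemmas~\ref{lemtrace} and~\ref{catsemi-stables} and should go through smoothly. The delicate point is Step 3, specifically identifying which of the $n$ bordering walls carry a module $N_i$ that lies in $\mathcal N$ and verifying that this collection actually \emph{generates} $\Fac M$ as a torsion class rather than a proper subclass. The clean way is to recall that $\Fac M = \T_{(M,P)}$ is the torsion class of the torsion pair whose $\Ext$-projectives are the indecomposable summands $M_1,\dots,M_k$ of $M$ (Adachi--Iyama--Reiten), and to match, via the approximation sequences $0 \to tM'_i \to M'_i \to N_i \to 0$ appearing in Proposition~\ref{walls}, the generators $N_i \in \mathcal N$ with the $M_i$; then $T(\mathcal N) = T(M_1\oplus\cdots\oplus M_k) = \Fac M$. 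Getting this correspondence precise — in particular ruling out that some $M_i$ is "missed" — is the crux and will require a careful bookkeeping of the $n = k + (t-k) + (\text{extra})$ mutations of $(M,P)$ and which ones increase $\Fac M$ versus which decrease it.
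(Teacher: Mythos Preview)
Your Steps 1 and 2 are correct and amount to the same argument as the paper's: using the canonical sequence $0\to tN\to N\to N/tN\to 0$ together with Lemma~\ref{lemtrace} and Corollary~\ref{tauformula} to see that $N\in\T_{\Ch_{(M,P)}}$ if and only if $N/tN=0$.

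Step 3, however, contains a genuine error. You assert that each $N_i$ lies in $M^\perp$ because it is the cokernel of a right $\add M$-approximation. This is not what Proposition~\ref{walls} says. The wall module $N_i$ associated to the face $\Co_{(M/M_i,P)}$ is the cokernel of a right $\add(M/M_i)$-approximation (not $\add M$), so one only knows $N_i\in(M/M_i)^\perp$; there is no reason for $\Hom(M_i,N_i)$ to vanish. Your contradiction argument then collapses: the dichotomy ``either $\hom(M,N_i)=0$ forces $\theta(N_i)\le 0$, or $\hom(M,N_i)\neq 0$'' is vacuous, since the first branch is simply not available. In fact the $N_i$ belonging to $\mathcal N$ are exactly those that \emph{do} lie in $\Fac M$, but this is what must be proved, not assumed.

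The paper's route is cleaner and avoids this trap. For each summand $M_i$ of $M$ it forms the sequence $M'_i\xrightarrow{f_i} M_i\to C_i\to 0$ with $f_i$ the right $\add(M/M_i)$-approximation of $M_i$, so $C_i\in\Fac M$ trivially. One then checks, via Proposition~\ref{walls}, that $N_i=C_i$ precisely when the mutation at $M_i$ decreases $\Fac$, i.e.\ $\Fac(\mu_{M_i}M)\subsetneq\Fac M$, and via Lemma~\ref{lemtrace} that this happens exactly when $\theta(N_i)>0$ on $\Ch_{(M,P)}$. Thus $\mathcal N=\{C_i:\Fac(\mu_{M_i}M)\subsetneq\Fac M\}$, and the equality $\Fac M=T(\mathcal N)$ then follows from \cite[Lemma~3.7]{DIJ}, which says precisely that these cokernels generate $\Fac M$ as a torsion class. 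Your ``Main obstacle'' paragraph gestures at this correspondence, but the bookkeeping you propose (matching $N_i$'s to $M_i$'s via approximation sequences and ruling out missed summands) is exactly the content of that lemma, and you would need to invoke it or reprove it to close the gap.
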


\begin{proof}
First we prove that $\Fac M=\T_{\Ch_{(M,P)}}$. Indeed, consider $(\Fac M, M^\perp)$ be the torsion pair defined by $(M,P)$ and let $N\in\mod A$. Take the canonical short exact sequence 
$$0\ra tN \ra N \ra N/tN \ra 0$$
associated to $N$. Then $N\in\T_{\Ch_{(M,P)}}$ if and only if $N/tN=0$ by Lemma \ref{lemtrace}. Therefore $N\in\T_{\Ch_{(M,P)}}$ if and only if $N\in\Fac M$. 

Now we prove that $\Fac M=T(\mathcal{N})$. Suppose that $M$ and $P$ can be written as $M=M_1\oplus\dots\oplus M_k$ and $P=P_{k+1}\oplus\dots \oplus P_n$, where all $M_i$ and $P_j$ are indecomposable.

Note that the sector of each wall $\D(N_i)$ that lies in the boundary of $\Ch_{(M,P)}$ corresponds to $\Co_{(M/M_i,P)}$ if $1\leq i\leq k$ or $\Co_{(M,P/P_i)}$ if $k+1\leq i\leq t$.

Consider the exact sequences $M'_i\overset{f_i}\rightarrow M_i\rightarrow C_i\rightarrow 0$, where $f_i$ is the right $(\add(M/M_i))$-approximation of $M_i$. Is easy to see that $C_i\in \Fac M$ for every $i$. We have that $N_i=C_i$ if and only if $\Fac (\mu_{M_i}M)\subsetneq \Fac M$ by Proposition \ref{walls}. Moreover Lemma \ref{lemtrace} implies that $\theta_{\alpha(M,P)}(N_i)>0$ if and only if $N_i=C_i$. Then \cite[Lemma 3.7]{DIJ} implies that $\Fac M=\T_{\Ch_{(M,P)}}$.
\end{proof}

\begin{rmk}\label{sametorsion}
Note that, if a chamber $\Ch$ is induced by a $\tau$-tilting pair $(M,P)$, then $\T_{\theta}=\Fac M$ for every $\theta\in\Ch$ by Theorem \ref{stablemodcat}.
\end{rmk}

Following the terminology introduced in \cite{DIJ} we say that an algebra $A$ is \textit{$\tau$-tilting finite} if there exists a finite number of indecomposable $\tau$-rigid modules. 

\begin{cor}
Let $A$ be an algebra. Then the function $\Ch$ mapping a $\tau$-tilting pair $(M,P)$ to its corresponding chamber $\Ch_{(M,P)}$ is injective. Moreover, if $A$ is $\tau$-tilting finite then $\Ch$ is also surjective.
\end{cor}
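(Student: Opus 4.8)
The plan is to prove injectivity and surjectivity separately, leaning entirely on results already established.

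\textbf{Injectivity.} Suppose $(M,P)$ and $(M',P')$ are $\tau$-tilting pairs with $\Ch_{(M,P)}=\Ch_{(M',P')}$. By Proposition~\ref{chambers}, these chambers coincide with the positive cones $\Co_{(M,P)}$ and $\Co_{(M',P')}$. Writing $M=\bigoplus_{i=1}^k M_i$, $P=\bigoplus_{j=k+1}^n P_j$ and similarly for $(M',P')$, Theorem~\ref{LI} says that the generators $\{g^{M_1},\dots,g^{M_k},-g^{P_{k+1}},\dots,-g^{P_n}\}$ form a basis of $\mathbb{Z}^n$, and likewise for the primed pair. Two open simplicial cones that are equal as sets must have the same set of extremal rays; hence the two bases determine the same set of rays, and in fact (since each $g$-vector is a primitive lattice point on its ray, and the $\tau$-rigid condition forces the correct sign) the same set of vectors $\{g^{M_1},\dots,-g^{P_n}\}=\{g^{M'_1},\dots,-g^{P'_n}\}$. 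Now Theorem~\ref{inj-g-vecteurs} gives that a $\tau$-rigid module is determined by its $g$-vector, and a projective $P(i)$ is recovered from $-g^{P(i)}$, so the multiset of indecomposable summands of $(M,P)$ and $(M',P')$ agree. Therefore $(M,P)\cong(M',P')$, proving $\Ch$ is injective.

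\textbf{Surjectivity when $A$ is $\tau$-tilting finite.} Let $\Ch$ be an arbitrary chamber. Pick $\theta\in\Ch$; by Remark~\ref{sametorsion} and the earlier discussion, $\Ch$ determines a torsion class $\T_\Ch=\T_\theta$. Since $A$ is $\tau$-tilting finite, by the theory of \cite{DIJ} (or \cite{AIR}) every torsion class in $\mod A$ is functorially finite, hence of the form $\Fac M$ for a unique $\tau$-tilting pair $(M,P)$. By Proposition~\ref{eqtorsionch} together with Remark~\ref{sametorsion} we then have $\T_\Ch=\Fac M=\T_{\Ch_{(M,P)}}$, and moreover every functional in $\Ch_{(M,P)}$ has associated torsion class exactly $\Fac M$. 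It remains to upgrade this equality of torsion classes to the equality of chambers $\Ch=\Ch_{(M,P)}$. For this one argues that two chambers with the same associated torsion class must coincide: the walls bordering $\Ch_{(M,P)}$ are described explicitly in Corollary~\ref{walls&chambers} as $\D(N_1),\dots,\D(N_n)$, and crossing any one of these walls strictly enlarges or shrinks the torsion class $\T_\theta$ (this is the content of the mutation relation $\Fac M_1=\Fac M$, $\Fac M\subsetneq \Fac M_2$ recalled before Proposition~\ref{walls}, via \cite[Lemma 3.7]{DIJ}); hence the locus of functionals whose associated torsion class equals $\Fac M$ is precisely the open cone $\Co_{(M,P)}$. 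Since $\theta$ lies in this locus and in $\Ch$, the connected component $\Ch$ of $\mathfrak{R}$ containing $\theta$ must be $\Ch_{(M,P)}$.

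\textbf{Main obstacle.} The delicate point is the last step: passing from ``same associated torsion class'' to ``same chamber.'' Injectivity is essentially linear algebra plus Theorems~\ref{LI} and \ref{inj-g-vecteurs}, and the identification of $\T_\Ch$ with some $\Fac M$ uses $\tau$-tilting finiteness in a standard way. What requires care is verifying that the map $\theta\mapsto \T_\theta$ separates distinct chambers, i.e.\ that one does not leave $\Co_{(M,P)}$ without changing $\T_\theta$; this rests on the explicit wall description of Corollary~\ref{walls&chambers} and on the fact that each bordering wall $\D(N_i)$ records a genuine change of torsion class under mutation. Once that separation property is in hand, surjectivity follows immediately.
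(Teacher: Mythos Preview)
Your injectivity argument is correct but considerably more laborious than the paper's. The paper simply observes that, by Proposition~\ref{eqtorsionch}, the chamber $\Ch_{(M,P)}$ determines the torsion class $\T_{\Ch_{(M,P)}}=\Fac M$, and then invokes \cite[Theorem~2.7]{AIR} (the bijection between basic $\tau$-tilting pairs and functorially finite torsion classes) to recover $(M,P)$ from $\Fac M$. Your route via extremal rays, primitivity of $g$-vectors, and Theorem~\ref{inj-g-vecteurs} does work --- the basis property from Theorem~\ref{LI} forces primitivity, and a nonzero $\tau$-rigid module cannot have $g$-vector $-e_j$ --- but it is unnecessary once you realise the chamber already knows its torsion class.

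Your surjectivity argument, however, has a genuine gap at exactly the point you flag as the ``main obstacle.'' You claim that the locus $\{\theta : \T_\theta=\Fac M\}$ is \emph{precisely} the open cone $\Co_{(M,P)}$, justifying this by the fact that crossing any bordering wall $\D(N_i)$ changes the torsion class. But this is a purely local statement about the boundary of $\Co_{(M,P)}$; it does not exclude the possibility that some $\theta$ far from $\Co_{(M,P)}$ --- in another connected component of $\mathfrak{R}$, say --- also satisfies $\T_\theta=\Fac M$. Nothing in your argument rules out such a disconnected locus. The paper sidesteps this entirely by citing \cite[Theorem~1.5]{DIJ}, which asserts that for a $\tau$-tilting finite algebra the closures $\overline{\Co_{(M,P)}}$ cover all of $\mathbb{R}^n$ (the $g$-vector fan is complete). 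Granting that, any $\theta$ in a chamber lies in some $\overline{\Co_{(M,P)}}$; it cannot lie on the boundary since boundaries are contained in walls, so $\theta\in\Co_{(M,P)}=\Ch_{(M,P)}$ and the two chambers coincide. The completeness of the fan is the global input your argument is missing, and without it the proof does not close.
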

\begin{proof}
This follows directly from the fact that $\Ch_{(M,P)}=\text{Fac}M$ and \cite[Theorem 2.7]{AIR}. The moreover part follows from \cite[Theorem 1.5]{DIJ}
\end{proof}

We saw already the existence of walls in the wall and chamber structure of an algebra which are not induced by any almost $\tau$-tilting pairs. However, in the examples that we computed, these walls do not bound a chamber. Hence  the following conjecture.

\begin{conj}
Let $A$ be an algebra. Then the every chamber in the wall and chamber structure of $A$ is induced by a $\tau$-tilting pair. 
\end{conj}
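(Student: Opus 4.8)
Since this is a conjecture I can only outline the approach I would try. Fix a chamber $\mathfrak{C}$ and pick a generic $\theta$ in it. Following Bridgeland \cite{B16} one has the torsion class $\T_\theta=\{N\in\mod A:\theta(N')>0\text{ for every nonzero quotient }N'\text{ of }N\}$, and since $\theta$ lies on no wall I would first check that $\T_\theta$ does not depend on the point of $\mathfrak{C}$ chosen. The plan then has two steps: (i) show $\T_\theta=\Fac M$ for some $\tau$-tilting pair $(M,P)$, and (ii) deduce $\mathfrak{C}=\Ch_{(M,P)}$.

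For step (i) the decisive point is that $\T_\theta$ is \emph{functorially finite}. Because $\theta$ avoids every wall there is no nonzero $\theta$-semi-stable module, so for each $N\in\mod A$ the canonical sequence $0\to tN\to N\to N/tN\to 0$ attached to $(\T_\theta,\F_\theta)$ satisfies $\theta(tN)>0$ and $\theta(N/tN)<0$ with no ``slope-zero'' contribution; in particular $\F_\theta=\{N:\theta(L)<0\text{ for every nonzero submodule }L\text{ of }N\}$. Exploiting this rigidity I would try to produce finitely many Ext-projective generators of $\T_\theta$, for instance by a compactness argument over the finitely many rays of $g$-vectors meeting $\overline{\mathfrak{C}}$, or by invoking a description of semi-stable subcategories via left finite wide subcategories in the spirit of \cite{Yur}. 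Once $\T_\theta$ is functorially finite, Adachi--Iyama--Reiten \cite{AIR} yields a $\tau$-tilting pair $(M,P)$, unique up to isomorphism, with $\Fac M=\T_\theta$.

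For step (ii), by Remark~\ref{sametorsion} every point of $\Ch_{(M,P)}$ already has associated torsion class $\Fac M=\T_\theta$, so it is enough to show that $\theta$ itself lies in the cone $\Co_{(M,P)}$. Writing $M=\bigoplus_{i=1}^k M_i$ and $P=\bigoplus_{j=k+1}^n P_j$ and using the basis $\{g^{M_1},\dots,g^{M_k},-g^{P_{k+1}},\dots,-g^{P_n}\}$ of $\mathbb R^n$ from Theorem~\ref{LI}, I would expand $\theta$ in this basis and show that its coefficients are positive by testing $\theta$ against the stable modules $N_1,\dots,N_n$ bounding $\Ch_{(M,P)}$ that Corollary~\ref{walls&chambers} produces: those $N_i$ lying in $\Fac M=\T_\theta$ satisfy $\theta(N_i)>0$ (a module is a quotient of itself), while the remaining $N_i$ lie in $M^\perp$ and satisfy $\theta(N_i)<0$ because $\theta$ is off the wall $\D(N_i)$, which is precisely the sign pattern cutting out $\Co_{(M,P)}$. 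Alternatively, one can try to argue that crossing a wall bounding a chamber always changes the associated torsion class by a nontrivial mutation, so that the assignment $\mathfrak{C}\mapsto\T_\theta$ is injective on chambers, forcing $\mathfrak{C}=\Ch_{(M,P)}$.

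The hard part will be step (i): without assuming $A$ to be $\tau$-tilting finite a torsion class need not be functorially finite, and one must use essentially that $\theta$ is a generic point of an open chamber (and not merely a point avoiding the walls) in order to rule out an infinitely generated $\T_\theta$. A secondary, expectedly more routine, difficulty is the sign bookkeeping in step (ii), or equivalently proving that distinct chambers carry distinct torsion classes.
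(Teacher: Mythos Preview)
The paper offers no proof: the statement is explicitly a conjecture, supported only by the examples the authors computed. There is therefore nothing in the paper to compare your argument against, and you are right to present only a strategy.

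Your outline has the right architecture, and you correctly locate the real obstruction in step~(i). Two comments on the specific mechanisms you propose there. First, the ``compactness over finitely many rays of $g$-vectors meeting $\overline{\mathfrak C}$'' idea has no traction as stated: for $\tau$-tilting infinite algebras infinitely many cones $\Co_{(M,P)}$ can accumulate on a single wall (already visible for the Kronecker quiver in the paper's own remark following Corollary~\ref{walls&chambers}), so nothing forces finiteness near $\partial\mathfrak C$. Second, invoking \cite{Yur} is close to circular: that work characterizes \emph{left-finite} semi-stable subcategories, so using it to deduce functorial finiteness of $\T_\theta$ presupposes exactly the finiteness you are after.

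Step~(ii) is also less routine than you suggest. Testing $\theta$ against the $N_i$ determines the coefficients of $\theta$ in the basis $\{g^{M_1},\dots,-g^{P_n}\}$ only if the classes $[N_1],\dots,[N_n]$ form the \emph{dual} basis; this is the $g$-vector/$c$-vector duality, which is true but is external input not established in the paper. Moreover, for an $N_i\notin\Fac M$ your claim ``$\theta(N_i)<0$ because $\theta$ is off the wall $\D(N_i)$'' does not follow: being off the wall only says $N_i$ is not $\theta$-semi-stable, which could fail via a submodule rather than via $\theta(N_i)\neq 0$. One needs an extra argument (for instance that each $N_i$ is a brick, hence lies entirely in $\T_\theta$ or entirely in $\F_\theta$). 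Finally, your alternative route---``crossing a wall changes the torsion class by a mutation, hence the map $\mathfrak C\mapsto \T_\theta$ is injective''---is circular: concluding that the change is a mutation presumes the neighboring chamber already comes from a $\tau$-tilting pair, which is the conjecture.
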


%%%%%%%%%%%%%%%%%%%%%%%%%%%%%%%%%%%%%%%%%%%%%%%%%%%%%%%%%%%%%%%%%%%
\section{Maximal green sequences induced by paths}
%%%%%%%%%%%%%%%%%%%%%%%%%%%%%%%%%%%%%%%%%%%%%%%%%%%%%%%%%%%%%%%%%%%

In this section, we study maximal green sequences in module categories using the techniques developed in the previous section. In the first subsection we introduce the notion of \textit{green paths} on the wall and chamber structure (see Definition \ref{greenpaths} below) of an algebra $A$. We show that every green path induces a stability function which is compatible with the wall and chamber structure. Later, we use green paths to prove that if the $\tau$-tilting pair $(M',P')$ is obtained from $(M,P)$ by mutation  and $\Fac M\subset \Fac M'$, then there is no torsion class strictly contained between $\Fac M$ and $\Fac M'$. We further give a characterization of the green paths that induce a maximal green sequence and we show that every maximal green sequence in $\mod A$ arises that way. Finally, we end the paper giving a new proof of the well known fact that the Markov algebra does not admit any maximal green sequence. 

%%%%%%%%%%%%%%%%%%%%%%%%
\subsection{Green paths}
%%%%%%%%%%%%%%%%%%%%%%%%

Let $A$ be an algebra and $\gamma:[0,1]\rightarrow \mathbb{R}^n$ be a continuous function such that $\gamma(0)=(1,\dots, 1)$ and $\gamma(1)=(-1,\dots,-1)$. If we fix an $A$-module $M$, $\gamma(t)$ induces a continuous function $\rho_M:[0,1]\rightarrow \mathbb{R}$ defined as $\rho_M(t)=\theta_{\gamma(t)}([M])$. Note that $\rho_M(0)>0$ and $\rho_M(1)<0$. Therefore, for every module there is at least one $t\in(0,1)$ such that $\theta_{\gamma(t)}([M])=0$. This leads us to the following definition of green paths:

\begin{defi}\label{greenpaths}
A \textit{green path in $\mathbb{R}^n$} is a continuous function $\gamma: [0,1]\ra\mathbb{R}^n$ such that $\gamma(0)=(1,\dots, 1)$, $\gamma(1)=(-1,\dots,-1)$, and such that for every $M\in\mod A$ there is exactly one $t_M\in[0,1]$ such that $\theta_{\gamma(t_M)}([M])=0$.
\end{defi}

\begin{rmk}
Note that, by definition, green paths can pass through the intersection of walls, which is not allowed in the definition of Bridgeland's \textit{$\D$-generic paths} (see \cite[Definition 2.7]{B16}) nor Engenhorst's  \textit{discrete paths} (see \cite{E14}).
\end{rmk}

Another key difference between the green paths and the other paths cited above is that green paths take account of crossing of all hyperplanes, not only  the walls. In the next proposition we show that we can recover the information of  crossings from the stability structure induced by the path. 

\begin{lem}\label{darkside}
Let $\gamma$ be a green path. Then $\theta_{\gamma(t)}([M])<0$ if and only if $t>t_M$.
\end{lem}

\begin{proof}
This is a direct consequence of the definition of green path and the fact that the function $\rho_M$ induced by $\gamma$ and $M$ is continuous. 
\end{proof}

\begin{prop}\label{paths}
Let $A$ be an algebra such that $rk(K_0(A))=n$. Then every green path $\gamma:[0,1]\to \mathbb{R}^n$ induces a stability structure $\phi_{\gamma}:\emph{mod} A\to [0,1]$ defined by $\phi_{\gamma}(M)=t_M$, where $t_M$ is the unique element in $[0,1]$ such that $\theta_{\gamma(t_M)}(M)=0$. Moreover $M$ is $\phi_{\gamma}$-semi-stable if and only if $M$ is $\theta_{\gamma(t_M)}$-semi-stable. 
\end{prop}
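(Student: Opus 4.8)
The plan is to verify the two assertions in Proposition~\ref{paths} separately: first that $\phi_\gamma$ is a stability function in Rudakov's sense, and second that $\phi_\gamma$-semi-stability coincides with $\theta_{\gamma(t_M)}$-semi-stability.

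For the first assertion, I would take a short exact sequence $0\to L\to M\to N\to 0$ in $\mod A$ and check the see-saw property for the phases $t_L, t_M, t_N$. The key point is additivity of $\theta_{\gamma(t)}$ on the Grothendieck group: since $[M]=[L]+[N]$, we have $\theta_{\gamma(t)}([M])=\theta_{\gamma(t)}([L])+\theta_{\gamma(t)}([N])$ for every $t$. Evaluating at $t=t_M$ gives $\theta_{\gamma(t_M)}([L])=-\theta_{\gamma(t_M)}([N])$. Now apply Lemma~\ref{darkside}: if $\theta_{\gamma(t_M)}([L])<0$ then $t_M>t_L$, and simultaneously $\theta_{\gamma(t_M)}([N])>0$ forces $t_M<t_N$ (again by Lemma~\ref{darkside}, since $\theta_{\gamma(t_M)}([N])\ge 0$ is incompatible with $t_M>t_N$, and equality is excluded by $\theta_{\gamma(t_M)}([N])\neq 0$). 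Thus $t_L<t_M<t_N$. Symmetrically, $\theta_{\gamma(t_M)}([L])>0$ yields $t_L<t_M$ and $t_M>t_N$, i.e. $t_N<t_M<t_L$. And if $\theta_{\gamma(t_M)}([L])=0=\theta_{\gamma(t_M)}([N])$, then by uniqueness of the zero $t_L=t_M=t_N$. This is exactly the see-saw trichotomy, so $\phi_\gamma$ is a stability function.

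For the second assertion, fix $M$ and write $t=t_M$, so $\theta_{\gamma(t)}([M])=0$. If $M$ is $\theta_{\gamma(t)}$-semi-stable, then for every submodule $L\subseteq M$ we have $\theta_{\gamma(t)}([L])\le 0$; by Lemma~\ref{darkside} (in the form $\theta_{\gamma(t)}([L])\le 0 \iff t\ge t_L$) this gives $t_L\le t_M$, i.e. $\phi_\gamma(L)\le\phi_\gamma(M)$, so $M$ is $\phi_\gamma$-semi-stable. Conversely, if $M$ is $\phi_\gamma$-semi-stable then $t_L\le t_M$ for every submodule $L$, hence $\theta_{\gamma(t_M)}([L])\le 0$ by the same equivalence, and together with $\theta_{\gamma(t_M)}([M])=0$ this says precisely that $M$ is $\theta_{\gamma(t_M)}$-semi-stable. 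One small care: Lemma~\ref{darkside} as stated is ``$\theta_{\gamma(t)}([M])<0 \iff t>t_M$''; the non-strict version and the statement for $L$ in place of $M$ follow by combining it with the definition of $t_L$ and continuity, so I would state this refinement explicitly at the start.

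The main obstacle is not conceptual but bookkeeping: one must be careful about the boundary/equality cases (when some $\theta_{\gamma(t_M)}$-value is exactly zero, or when $t_M$ lands at an endpoint $0$ or $1$) and make sure Lemma~\ref{darkside} is applied in the correct strict-versus-non-strict form. The argument also implicitly uses that $[L]=0$ forces $L=0$ so that every nonzero submodule has a well-defined phase, and that the trichotomy in the see-saw property is genuinely exhaustive — both of which are immediate here. No deeper input ($\tau$-tilting theory, $g$-vectors) is needed; only additivity of $\theta_{\gamma(t)}$ and Lemma~\ref{darkside}.
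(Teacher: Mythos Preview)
Your proposal is correct and follows essentially the same approach as the paper: both use additivity of $\theta_{\gamma(t)}$ on $K_0(A)$ together with Lemma~\ref{darkside} to verify the see-saw property, and both deduce the semi-stability equivalence directly from the sign characterization. Note one slip in your symmetric case: from $\theta_{\gamma(t_M)}([L])>0$ you should conclude $t_M<t_L$ (not $t_L<t_M$), though your final chain $t_N<t_M<t_L$ is stated correctly.
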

 
\begin{proof}
Let $\gamma$ be a green path in $\mathbb{R}^n$. First, note that $\phi_{\gamma}$ is a well defined function by the definition of green path. We want to show that $\phi_\gamma$ induces a stability structure in $\mod A$. Since $\gamma$ is continuous, one has $\theta_{\gamma(t)}([M])<0$ if and only if $t>t_M$, and $\theta_{\gamma(t)}([M])>0$ if and only if $t<t_M$. Consider a short exact sequence $$0\ra L\ra M\ra N\ra 0$$ and suppose that $\phi_\gamma([L])<\phi_\gamma([M])$. Then $\theta_{\gamma(t_M)}([M])=0$ and $\theta_{\gamma(t_M)}([L])<0$. Therefore $\theta_{\gamma(t_M)}([N])=\theta_{\gamma(t_M)}([M]-[L])=\theta_{\gamma(t_M)}([M])-\theta_{\gamma(t_M)}([L])>0$. Hence $\phi_{\gamma}([L])<\phi_\gamma([M])<\phi_\gamma([N])$. The other two conditions of the see-saw property are proved in a similar way, which shows that $\phi_\gamma$ is a stability function by Definition \ref{seesaw}.

If $M$ is an $A$-module, then $\theta_{\gamma(t_M)}([M])=0$. Now suppose that $M$ is $\phi_{\gamma}$-semi-stable and $L$ is a proper submodule of $M$. Then $\phi_{\gamma}(L)\leq\phi_{\gamma}(M)$ (i.e., $t_L\leq t_M$). Therefore $\theta_{\gamma(t_M)}([L])\leq 0$ and thus $M$ is $\theta_{\gamma(t_M)}$-semi-stable.

On the other hand, suppose that $M$ is $\theta_{\gamma(t_M)}$-semi-stable and $L$ is a proper submodule of $M$. Then $\theta_{\gamma(t_M)}([L])\leq 0$, hence $t_L\leq t_M$. Therefore $M$ is $\phi_{\gamma}$-semi-stable. 
\end{proof}

As a consequence of the previous proposition we get the following result, in which we use the notations of Subsection~\ref{Sect:torsion} with $\P=[0,1]$.

\begin{prop}\label{green-to-red}
Let $\gamma$ be a green path and let $\phi_{\gamma}$ be the stability structure induced by $\gamma$. Then $\T_0=\emph{mod} A$ and $\T_1=\{0\}$.
\end{prop}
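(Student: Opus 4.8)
The plan is to unwind the definitions of $\T_0$ and $\T_1$ in the torsion-class notation of Subsection~\ref{Sect:torsion}, applied to the poset $\P=[0,1]$ and the stability function $\phi_\gamma$ constructed in Proposition~\ref{paths}. Recall from Proposition~\ref{eqdeftorsion}(d) that $\T_p=\{M\in\mod A : \phi_\gamma(N)\geq p\text{ for every quotient }N\text{ of }M\}$, and dually $\F_p=\{M\in\mod A : \phi_\gamma(N)<p\text{ for every subobject }N\text{ of }M\}$ by Proposition~\ref{eqdeftorsionfree}(d). The whole statement will then follow from the elementary observation that the phases $t_M=\phi_\gamma(M)$ always lie in the \emph{interval} $[0,1]$, together with the boundary behaviour $\gamma(0)=(1,\dots,1)$ and $\gamma(1)=(-1,\dots,-1)$.

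For $\T_0=\mod A$: since $\phi_\gamma$ takes values in $[0,1]$, every object $M$ satisfies $\phi_\gamma(N)\geq 0$ for every quotient $N$ of $M$ trivially, so $M\in\T_0$ by the characterization above; hence $\T_0=\mod A$. (Equivalently, $0$ is the minimal element of $\P=[0,1]$, so $\T_0$ is automatically the largest torsion class in the chain.)

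For $\T_1=\{0\}$: I must show that no nonzero module $M$ lies in $\T_1$, i.e. that some quotient $N$ of $M$ has $\phi_\gamma(N)<1$. Take $N=M$ itself; it suffices to show $t_M<1$ for every nonzero $M$, in other words that $1$ is not in the image of $\phi_\gamma$. This is where the endpoint condition $\gamma(1)=(-1,\dots,-1)$ enters: for a nonzero module $M$ the dimension vector $[M]$ has all entries nonnegative and at least one positive, so $\theta_{\gamma(1)}([M])=\langle(-1,\dots,-1),[M]\rangle=-\sum_i [M]_i<0$. By Lemma~\ref{darkside}, $\theta_{\gamma(t)}([M])<0$ exactly when $t>t_M$; since $\theta_{\gamma(1)}([M])<0$ we cannot have $t_M=1$, so $t_M<1$. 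Therefore $M\notin\T_1$ whenever $M\neq 0$, giving $\T_1=\{0\}$.

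I expect no real obstacle here; the only point requiring a little care is making sure one invokes the correct characterization of $\T_p$ (quotients, not subobjects) and that the strictness in Lemma~\ref{darkside} is used to rule out the boundary value $t_M=1$ rather than just $t_M\le 1$. One could equally argue the $\T_1=\{0\}$ half via the torsion pair $(\T_1,\F_1)$ of Proposition~\ref{torsionpair}: the symmetric computation $\theta_{\gamma(0)}([M])=\sum_i[M]_i>0$ together with Lemma~\ref{darkside} shows $t_M>0$, hence $\phi_\gamma(N)\le\phi_\gamma(M)<1$ is not needed but $\F_1=\mod A$ follows, forcing $\T_1=\{0\}$. Either route is short; I would present the direct one using Propositions~\ref{eqdeftorsion} and \ref{eqdeftorsionfree}.
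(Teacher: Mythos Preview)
Your proof is correct and follows essentially the same approach as the paper. Both arguments rest on the endpoint computation $\theta_{\gamma(1)}([M])=-\sum_i[M]_i<0$ to conclude $1\notin\phi_\gamma(\mod A)$; the only cosmetic difference is that the paper then invokes Lemma~\ref{supremum}(a) (and ``dual arguments'' for $\T_0$), whereas you unwind the characterization of $\T_p$ from Proposition~\ref{eqdeftorsion}(d) directly.
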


\begin{proof}
Let $\gamma$ be a green path and let $M$ an arbitrary $A$-module. Then $\theta_{\gamma(1)}([M])=\langle (-1, -1,\dots, -1), [M]\rangle<0$. Hence $1\not\in \phi_{\gamma}(\mod A)$. Therefore Lemma \ref{supremum} implies that $\T_1=\{0\}$. Using dual arguments one can prove that $\T_0=\mod A$.
\end{proof}

\begin{cor}\label{maximal-paths}
Let $\gamma$ be a green path and let $\phi_{\gamma}$ be the stability function induced by $\gamma$. Then $\gamma$ induces a maximal green sequence if and only if there is a finite set $\{M_1, \dots, M_n: M_i \text{ is a $\phi_{\gamma}$-stable module}\}$ such that $t_{M_i}\neq t_{M_j}$ when $i\neq j$. 
\end{cor}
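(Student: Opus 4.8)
The plan is to reduce the statement to Theorem~\ref{maximalgreensequences}, applied to the stability function $\phi_{\gamma}:\mod A\to[0,1]$ produced by Proposition~\ref{paths}. Here the poset $[0,1]$ does have a maximal element, namely $1$, but $1\notin\phi_{\gamma}(\mod A)$, since $\theta_{\gamma(1)}([M])=\langle(-1,\dots,-1),[M]\rangle<0$ for every nonzero $M$ forces $t_M\neq 1$ (this is the computation behind Proposition~\ref{green-to-red}). So Theorem~\ref{maximalgreensequences} applies, and it says that $\gamma$ induces a maximal green sequence if and only if $\phi_{\gamma}$ is discrete and induces only finitely many distinct torsion classes $\T_p$. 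It will therefore be enough to show that this pair of conditions is equivalent to the assertion that \emph{the $\phi_{\gamma}$-stable modules form a finite set and have pairwise distinct phases.}

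The crucial ingredient I would isolate first is a dictionary between the jumps of the chain $(\T_p)_{p\in[0,1]}$ and the phases $t_M$ of the $\phi_{\gamma}$-stable modules $M$. In one direction: if $M$ is $\phi_{\gamma}$-stable with $t_M=p$, then $M$ is semi-stable, so every quotient of $M$ has phase $\geq p$ and hence $M\in\T_p$ by Proposition~\ref{eqdeftorsion}(d); but $M$ is also a quotient of itself, of phase $p<q$ for every $q>p$, so $M\notin\T_q$. Thus the chain $(\T_p)$ strictly decreases immediately above each phase $t_M$. In the other direction: if $p<q$ with $\T_q\subsetneq\T_p$, choose $M\in\T_p\setminus\T_q$; its maximally destabilizing quotient $M'$ is semi-stable by Theorem~\ref{HN} and satisfies $p\leq\phi(M')<q$ (the first inequality because $M\in\T_p$, the second because $M\notin\T_q$), and by the Jordan--H\"older property (Theorem~\ref{JH}) $M'$ has a stable composition factor $G$ with $\phi(G)=\phi(M')\in[p,q)$. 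Consequently any two distinct members of the chain are ``separated'' by a $\phi_{\gamma}$-stable module whose phase lies strictly between the corresponding parameters, and disjoint separating intervals yield stable modules of distinct phases.

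With this dictionary in hand both implications become short. If the $\phi_{\gamma}$-stable modules are exactly $M_1,\dots,M_n$ with $t_{M_i}\neq t_{M_j}$ for $i\neq j$, then there is at most one $\phi_{\gamma}$-stable module of any given phase, so $\phi_{\gamma}$ is discrete; and were the chain $(\T_p)$ to have $n+2$ distinct values, the dictionary would furnish $n+1$ stable modules with pairwise distinct phases (one for each of $n+1$ consecutive disjoint parameter intervals), which is impossible. Hence $\phi_{\gamma}$ induces finitely many torsion classes, and Theorem~\ref{maximalgreensequences} produces a maximal green sequence. Conversely, if $\gamma$ induces a maximal green sequence, then Theorem~\ref{maximalgreensequences} tells us that $\phi_{\gamma}$ is discrete---so any two $\phi_{\gamma}$-stable modules of equal phase are isomorphic, giving pairwise distinct phases---and that the chain $(\T_p)$ takes only finitely many values $\{0\}=\T_{p_0}\subsetneq\cdots\subsetneq\T_{p_m}=\mod A$; by the first half of the dictionary the phase of any $\phi_{\gamma}$-stable module is the maximum of one of the at most $m+1$ maximal parameter intervals on which $\T_{\bullet}$ is constant, so there are at most $m+1$ admissible phases and, by discreteness, at most $m+1$ stable modules. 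I expect the only delicate point to be making this last interval bookkeeping precise (which half-open interval a given phase sits at the top of, and the fact that the endpoint is attained); the one substantive input, rather than pure formality, is the Jordan--H\"older step of Theorem~\ref{JH}, which guarantees that every drop in the chain is genuinely witnessed by a stable module.
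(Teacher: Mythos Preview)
Your proof is correct and follows the same route as the paper: both reduce to Theorem~\ref{maximalgreensequences} after checking, via Proposition~\ref{green-to-red}, that the maximal element $1$ of $[0,1]$ lies outside $\phi_\gamma(\mod A)$. Your argument is in fact more complete than the paper's. The paper's proof only treats the forward implication, asserting that ``it is easy to see that the chain of torsion classes induced by $\gamma$ is finite given our hypothesis'' and then invoking Theorem~\ref{maximalgreensequences}; your dictionary between jumps of $(\T_p)$ and phases of stable modules (using Theorem~\ref{HN} for the maximally destabilizing quotient and Theorem~\ref{JH} to extract a stable factor) is exactly what justifies that sentence. For the converse, the paper says nothing, whereas you supply the needed bookkeeping: each stable module has phase equal to the maximum of one of the finitely many constancy intervals of $\T_\bullet$, so discreteness bounds the number of stable modules. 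That step is genuine and not entirely formal, so it is good that you flagged it.
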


\begin{proof}
Without loss of generality we can suppose that $t_{M_i}\leq t_{M_j}$ if $i<j$. It is easy to see that the chain of torsion classes induced by $\gamma$ is finite given our hypothesis. Moreover Proposition \ref{green-to-red} implies that $\T_0=\mod A$ and $\T_1=\{0\}$. Finally, we have that $t_{M_i}<t_{M_j}$ whenever $i<j$, then we have that $\phi_{\gamma}$ is a discrete. Therefore Theorem \ref{maximalgreensequences} implies that $\gamma$ induces a maximal green sequence. 
\end{proof}

%%%%%%%%%%%%%%%%%%%%%%%%%%%%%%%%%%%%%%%%%%%%%%%%%%%%%%%%%%%%%%%%%%%%
\subsection{Green paths and mutations}

Let $(M,P)$ be a $\tau$-tilting pair and let $(M',P')$ be a mutation of $(M,P)$. Then we can suppose without loss of generality that $\Fac M'\subsetneq \Fac M$. It was shown in \cite[Theorem 3.1]{DIJ} that $\Fac M' \subsetneq \T \subsetneq \Fac M$ implies that either $\Fac M=\T$ or $\Fac M'=T$. In this subsection we give an alternative proof of this fact using the the notion of green path. 

\begin{lem}\label{torsionpath}
Let $\gamma$ a green path and $(M,P)$ a $\tau$-tilting pair. Suppose that $\gamma(t_0)\in\Ch_{(M,P)}$. Then $\T_{t_0}=\emph{Fac} M$.
\end{lem}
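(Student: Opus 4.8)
The plan is to identify $\T_{t_0}$ with $\Fac M$ via the description of the torsion class $\T_p$ given in Proposition~\ref{eqdeftorsion}(d) as the set of modules all of whose quotients have phase at least $p$, combined with the explicit formula for $\theta_{\alpha(M,P)}$ from Corollary~\ref{tauformula} and the trace computation of Lemma~\ref{lemtrace}. The key observation is that $\gamma(t_0)\in\Ch_{(M,P)}$ means $\gamma(t_0)=\alpha(M,P)$ for some choice of positive coefficients, so $\theta_{\gamma(t_0)}=\theta_{\alpha(M,P)}$; hence the sign behaviour of $\rho_N(t)=\theta_{\gamma(t)}([N])$ near $t_0$ is controlled by the homological formula.

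First I would show $\Fac M\subseteq \T_{t_0}$. Let $N\in\Fac M$ and let $N\twoheadrightarrow N'$ be any nonzero quotient; since $\Fac M$ is a torsion class, $N'\in\Fac M$, so $N'=tN'$ and Lemma~\ref{lemtrace} gives $\theta_{\gamma(t_0)}([N'])=\theta_{\alpha(M,P)}([tN'])>0$. By Lemma~\ref{darkside} this forces $t_{N'}>t_0$, i.e. $\phi_{\gamma}(N')>t_0$. Since this holds for every nonzero quotient of $N$, Proposition~\ref{eqdeftorsion}(d) (applied with $\P=[0,1]$ and $p=t_0$) yields $N\in\T_{t_0}$. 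Conversely, to show $\T_{t_0}\subseteq\Fac M$, take $N\in\T_{t_0}$ and consider its canonical sequence $0\to tN\to N\to N/tN\to 0$ for the torsion pair $(\Fac M, M^{\perp})$. The quotient $N/tN$ lies in $M^{\perp}$; I would argue that $\theta_{\gamma(t_0)}([N/tN])\leq 0$ — indeed, from Corollary~\ref{tauformula}, $\theta_{\alpha(M,P)}([N/tN])=\sum\alpha_i\hom(M_i,N/tN)-\sum\alpha_i\hom(N/tN,\tau M_i)-\sum\alpha_j\hom(P_j,N/tN)$, and the first sum vanishes because $N/tN\in M^{\perp}$, leaving a nonpositive quantity. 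If $N/tN\neq 0$ this gives $\theta_{\gamma(t_0)}([N/tN])\leq 0$, so by Lemma~\ref{darkside} $t_{N/tN}\leq t_0$, meaning $\phi_{\gamma}(N/tN)\leq t_0$; but $N/tN$ is a nonzero quotient of $N\in\T_{t_0}$, which by Proposition~\ref{eqdeftorsion}(d) must have phase $\geq t_0$. Thus $\phi_{\gamma}(N/tN)=t_0$, i.e. $\theta_{\gamma(t_0)}([N/tN])=0$.

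To finish I need to rule out a nonzero $N/tN$ with $\theta_{\gamma(t_0)}([N/tN])=0$. Here the boundary case is the main obstacle: a priori $N/tN\in M^{\perp}$ could be $\theta_{\alpha(M,P)}$-semistable. However, by Lemma~\ref{catsemi-stables} a $\theta_{\alpha(M,P)}$-semistable module lies in ${}^{\perp}\tau M\cap M^{\perp}\cap P^{\perp}$, and since $(M,P)$ is $\tau$-tilting we have $|M|+|P|=n$, so Theorem~\ref{stablemodcat} says there are no nonzero $\theta_{\alpha(M,P)}$-stable modules, hence no nonzero semistable ones; therefore no nonzero submodule of $N$ other than inside $tN$ can have $\theta_{\gamma(t_0)}\leq 0$ on all its submodules. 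More directly: having shown $\theta_{\gamma(t_0)}([N/tN])=0$ forces $N/tN$ to be $\theta_{\gamma(t_0)}$-semistable once one checks all its submodules also give $\leq 0$ — which follows since any submodule $L\subseteq N/tN$ still lies in $M^{\perp}$ (as $M^{\perp}$ is closed under submodules) and the same formula gives $\theta_{\gamma(t_0)}([L])\leq 0$. Then Theorem~\ref{stablemodcat} forces $N/tN=0$, so $N=tN\in\Fac M$. Combining both inclusions gives $\T_{t_0}=\Fac M$, as claimed. The one routine point to be careful about is that $\gamma(t_0)$ genuinely lies in the open positive cone $\Co_{(M,P)}$ (not just its closure), so that all coefficients $\alpha_i$ are strictly positive and the strict inequalities in Lemma~\ref{lemtrace} and Lemma~\ref{catsemi-stables} apply; this is exactly the hypothesis $\gamma(t_0)\in\Ch_{(M,P)}=\Co_{(M,P)}$.
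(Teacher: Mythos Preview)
Your proof is correct, and it takes a somewhat different route from the paper's. The paper proves $\T_{t_0}\subseteq\Fac M$ by passing through Bridgeland's torsion class $\T_{\theta}$: it uses the description $\T_{t_0}=\mathrm{Filt}(\{N\ \phi_\gamma\text{-stable}: t_N\geq t_0\})$ from Proposition~\ref{eqdeftorsion}, checks that each such stable generator lies in $\T_{\theta_{\gamma(t_0)}}$ via Lemma~\ref{darkside}, and then invokes Proposition~\ref{eqtorsionch}/Remark~\ref{sametorsion} to identify $\T_{\theta_{\gamma(t_0)}}$ with $\Fac M$. For the reverse inclusion the paper argues with the maximally destabilizing quotient. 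You instead use the quotient characterisation Proposition~\ref{eqdeftorsion}(d) throughout, compute signs directly from Corollary~\ref{tauformula}, and for $\T_{t_0}\subseteq\Fac M$ run the canonical short exact sequence for $(\Fac M, M^{\perp})$, showing that a nonzero $N/tN$ would be $\theta_{\gamma(t_0)}$-semistable and hence excluded because $\gamma(t_0)$ lies in a chamber.

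Your argument is more self-contained: it bypasses Bridgeland's $\T_\theta$ and Proposition~\ref{eqtorsionch} entirely, relying only on the homological formula and the chamber property. The paper's route has the advantage of reusing the identification $\T_{\Ch_{(M,P)}}=\Fac M$ already established, so less is recomputed. One small cosmetic point: when you cite Lemma~\ref{lemtrace} for a strict inequality, note the lemma as stated only gives $\geq 0$; the strictness for nonzero $N'\in\Fac M$ follows from its proof (or directly from $\hom(M,N')>0$), and in any case $\geq 0$ already suffices for $t_{N'}\geq t_0$ via Lemma~\ref{darkside}, which is all you need for Proposition~\ref{eqdeftorsion}(d).
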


\begin{proof}
Let $\gamma$ a green path and $\phi_\gamma$ the stability function induced by $\gamma$. Consider $t_0$ as in the statement and let $\T_{t_0}$ be the torsion class associated to it. Then, by definition, we have 
$$\T_{t_0}=\text{Filt}(\{N: N \text{ is a $\phi_\gamma$-stable module such that $t_N\geq t_0$}\})$$
by the Proposition \ref{eqdeftorsion}. Now, let $N$ be a $\phi_\gamma$-stable module such that $N\in \T_{\gamma(t_0)}$ and $N'$ a nontrivial quotient of $N$. Then, we have that $t_{N'}>t_N$ because $N$ is $\phi_\gamma$-stable. Hence $\theta_{\gamma(t_0)}([N'])>0$ by Lemma \ref{darkside}. Therefore $N'\in\T_{\theta_{\gamma(t_0)}}$ by \cite[Lemma 6.6]{B16}. Then $\T_{t_0}\subset \T_{\theta_{\gamma(t_0)}} =\Fac M$.

In the other direction, let $M'\in\Fac M$ and $N'$ be its maximally destabilizing quotient with respect to the stability structure induced by $\gamma$. Then Proposition \ref{eqtorsionch} implies that $\theta_{\gamma(t_0)}([N'])>0$. Therefore $t_{N'}>t_M$ by Lemma \ref{darkside}. Hence $M'\in\T_{t_0}$, which implies that $\Fac M\subset \T_{t_0}$. This finishes the proof. 
\end{proof}

\begin{lem}\label{decrisingfunction}
Let $(M,P)$ and $(M',P')$ be two $\tau$-tilting pairs related by  a mutation such that $\Fac M\subsetneq \Fac M'$ and fix $\alpha=(1,1,\dots, 1)$. Then 
$$\theta_{\alpha(M,P)}([N])\leq\theta_{\alpha(M',P')}([N])$$
for every $N\in \mod A$.
\end{lem}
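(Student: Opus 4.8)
The plan is to turn the inequality into a positivity statement about a single vector and then exploit the structure of the mutation. Since $\alpha=(1,\dots,1)$ and $g$-vectors are additive on direct sums, the generic vector $\alpha(M,P)\in\Co_{(M,P)}$ is $g^M-g^P$, and similarly $\alpha(M',P')$ corresponds to $g^{M'}-g^{P'}$. Applying Corollary \ref{tauformula} to both pairs, the assertion is equivalent to $\langle v,[N]\rangle\ge 0$ for every $N\in\mod A$, where $v:=(g^{M'}-g^{P'})-(g^M-g^P)$; the point of Corollary \ref{tauformula} is that it re-expresses each term as a combination of $\Hom$-dimensions between $N$ and the summands of $M,M',P,P'$, which is what we will estimate.

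Next I would use the mutation hypothesis. As recalled before Proposition \ref{walls}, the pairs $(M,P)$ and $(M',P')$ share a common almost $\tau$-tilting pair $(\bar M,\bar P)$, and since $\Fac M\subsetneq\Fac M'$ the pair $(M',P')$ is the completion with the larger torsion class; in particular $M'=\bar M\oplus X$ for an indecomposable $\tau$-rigid module $X$, and $\Fac M$ is the torsion class of the almost complete pair. I would then take the exchange exact sequence of \cite{AIR} attached to this mutation, whose middle term lies in $\add\bar M$ and which relates $X$ to the summand exchanged on the $(M,P)$-side (either a module summand or an indecomposable projective $Q$ with $P=\bar P\oplus Q$). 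Since the $g$-vector is additive along the corresponding exchange triangle in $K^{b}(\proj A)$ — equivalently, since the two $g$-vector bases of Theorem \ref{LI} differ in exactly one element via this mutation — this expresses $v$ explicitly in terms of the terms of the exchange sequence.

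The heart of the proof is then to bound $\langle v,[N]\rangle$ from below. I would split $N$ along the torsion pair $(\Fac M,M^{\perp})$ via the canonical sequence $0\to tN\to N\to N/tN\to 0$. The torsion part satisfies $tN\in\Fac M\subseteq\Fac M'$, so Lemma \ref{lemtrace}, applied to both $(M,P)$ and $(M',P')$, controls its contribution and shows it is favourable. For the torsion-free part $N/tN\in M^{\perp}$ I would combine: that $\Hom_A(\Fac\bar M,\tau_A\bar M)=0$; the $\tau$-rigidity of $X$ and of the summands of $\bar M$; and the long exact $\Hom$-sequences obtained by applying $\Hom_A(-,N/tN)$ and $\Hom_A(N/tN,\tau_A(-))$ to the exchange sequence, so as to cancel the negative $\Hom(-,\tau_A-)$ terms against the new positive $\Hom$-contribution coming from the larger pair $(M',P')$. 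The main obstacle is exactly this last step: the estimate must hold uniformly for \emph{every} $N$, so Lemma \ref{lemtrace} can only be used on the torsion part, not on $N$ itself; and the sign bookkeeping — in particular which way the exchange sequence points — is forced by the hypothesis $\Fac M\subsetneq\Fac M'$. The case in which the summand exchanged on the $(M,P)$-side is an indecomposable projective is the most delicate, since there that projective enters the exchange triangle and one must track carefully how it interacts with the torsion-free part $N/tN$.
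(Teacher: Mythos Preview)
Your plan takes a genuinely different and considerably more elaborate route than the paper. The paper's argument is completely termwise: after applying Corollary~\ref{tauformula} to both pairs, it proves the three separate inequalities
\[
hom(M,N)\le hom(M',N),\qquad hom(N,\tau M)\ge hom(N,\tau M'),\qquad hom(P,N)\ge hom(P',N),
\]
using only that $M\in\Fac M'$, that $\tau M'\in\Sub(\tau M)$, and that $P'$ is a direct summand of $P$ when $\Fac M\subsetneq\Fac M'$. No exchange sequence, no torsion decomposition of $N$, and no case split on whether the exchanged summand is projective.

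Your proposal, by contrast, computes the difference vector $v$ via the exchange triangle and then splits $N$ along $(\Fac M,M^{\perp})$. That is a legitimate strategy in principle, but there is a concrete gap in the step where you invoke Lemma~\ref{lemtrace} for the torsion part. That lemma only tells you that $\theta_{\alpha(M,P)}([tN])\ge 0$ and $\theta_{\alpha(M',P')}([tN])\ge 0$; it does \emph{not} compare the two. Unwinding the proof of Lemma~\ref{lemtrace} with $\alpha=(1,\dots,1)$, one has $\theta_{\alpha(M,P)}([tN])=hom(M,tN)$ and $\theta_{\alpha(M',P')}([tN])=hom(M',tN)$, so what you actually need on the torsion part is precisely $hom(M,tN)\le hom(M',tN)$ --- i.e.\ the paper's first termwise inequality, just restricted to $tN$. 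Your machinery has not bypassed this; it has relocated it. The same issue will resurface on the torsion-free part, where the cancellation you sketch ultimately requires comparing $hom(N/tN,\tau M)$ with $hom(N/tN,\tau M')$, which is the paper's second inequality.

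In short: the exchange-sequence/torsion-pair apparatus is not wrong, but it is doing no real work here. The paper's three one-line estimates, drawn directly from $\Fac M\subsetneq\Fac M'$ (hence $M\in\Fac M'$, $\tau M'\in\Sub\tau M$, and $P'\mid P$), already give the result for arbitrary $N$ without any decomposition.
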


\begin{proof}
Let $N$ be an $A$-module. Then Corollary \ref{tauformula} implies that 
$$\theta_{\alpha(M,P)}([N])=hom(M,N)-hom(N,\tau M)-hom(P,N)$$
and
$$\theta_{\alpha(M',P')}([N])=hom(M',N)-hom(N,\tau M')-hom(P',N).$$
Then is enough to prove that $hom(M,N)\leq hom(M',N)$, $hom(N,\tau M)\geq hom(N,\tau M')$ and $hom(P,N)\geq hom(P',N)$.

First, let $f\in\Hom_A(M,N)$. Then we can extend $f$ to a morphism from $M'$ to $N$ because $M\in\Fac M'$. This implies $hom(M,N)\leq hom(M',N)$.

Dually we prove that every morphism $hom(N,\tau M)\geq hom(N,\tau M')$ because $\tau M'\in\Sub (\tau M)$.

Finally, is enough to note that $P'$ is a direct summand of $P$ to conclude that $hom(P,N)\geq hom(P',N)$. This proves our claim.
\end{proof}

Now we are able to show the main result of this subsection.

\begin{theorem}\label{norefinements}
Let $(M,P)$ be a $\tau$-tilting pair and let $(M',P')$ be a mutation of $(M,P)$ such that $\Fac M\subsetneq \Fac M'$. If $\T$ is a torsion class $\T$ such that $\Fac M' \subset \T \subset \Fac M$, then either $\T=\Fac M'$ or $\T=\Fac M$.
\end{theorem}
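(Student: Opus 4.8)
The plan is to construct a green path $\gamma$ passing through the two chambers $\Ch_{(M,P)}$ and $\Ch_{(M',P')}$ in the right order, and then transfer the ``no intermediate torsion class'' statement from the continuous picture (Corollary~\ref{maximal-paths} / Proposition~\ref{mutation}) back to torsion classes. Since $(M,P)$ and $(M',P')$ are related by a mutation, the chambers $\Ch_{(M,P)}$ and $\Ch_{(M',P')}$ are adjacent, sharing a codimension-one wall, namely the cone of the common almost $\tau$-tilting pair. First I would fix the straight-line segment, or a suitable continuous path, from $(1,\dots,1)$ to $(-1,\dots,-1)$ that starts in $\Ch_{(M,P)}$, crosses the shared wall once, enters $\Ch_{(M',P')}$, and eventually exits. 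The point of Lemma~\ref{decrisingfunction} is precisely to guarantee that along this path each $\rho_N(t)=\theta_{\gamma(t)}([N])$ is (weakly) decreasing when we pass from $\Ch_{(M,P)}$ to $\Ch_{(M',P')}$, so that each module $N$ has a unique zero $t_N$; this is what makes $\gamma$ a genuine green path in the sense of Definition~\ref{greenpaths}.

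Next, I would invoke Lemma~\ref{torsionpath}: if $\gamma(t_0)\in\Ch_{(M,P)}$ then $\T_{t_0}=\Fac M$, and if $\gamma(t_1)\in\Ch_{(M',P')}$ then $\T_{t_1}=\Fac M'$, with $t_0<t_1$ because $\Fac M'\subsetneq\Fac M$ forces the corresponding phases to decrease. Now suppose $\T$ is a torsion class with $\Fac M'\subseteq\T\subseteq\Fac M$. The stability function $\phi_\gamma$ induced by $\gamma$ (Proposition~\ref{paths}) gives a chain of torsion classes $\T_p$, and $\Fac M=\T_{t_0}\supseteq \T\supseteq\T_{t_1}=\Fac M'$. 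The key claim is that there is no torsion class strictly between $\T_{t_0}$ and $\T_{t_1}$ in this chain. By Proposition~\ref{mutation}, this amounts to showing that there is no $r$ with $\T_{t_1}\subsetneq\T_r\subsetneq\T_{t_0}$ together with discreteness of $\phi_\gamma$ at the relevant phase. The first condition holds because $\gamma$ crosses only the single shared wall between the two chambers: for any $t_0<r<t_1$ the point $\gamma(r)$ lies either in $\Ch_{(M,P)}$, on the shared wall, or in $\Ch_{(M',P')}$, and by Lemma~\ref{torsionpath} (and Remark~\ref{sametorsion}, applied to the wall) the torsion class $\T_r$ equals $\Fac M$, $\Fac M'$, or is sandwiched with no room to be new. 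The discreteness at the shared wall follows from Theorem~\ref{stablemodcat}: an almost $\tau$-tilting pair has exactly $n-(|M|+|P|)=1$ nonisomorphic stable module, namely the module $N$ defining the wall, so $\phi_\gamma$ admits a unique stable object at that phase. Hence Proposition~\ref{mutation} applies and $\T=\T_{t_0}=\Fac M$ or $\T=\T_{t_1}=\Fac M'$.

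The main obstacle I anticipate is the careful construction of the green path: one must ensure simultaneously that $\gamma$ starts at $(1,\dots,1)$, ends at $(-1,\dots,-1)$, actually meets both chambers in the correct order, and that every module has a \emph{unique} zero of $\rho_N$ along the whole path — not just in the region between the two chambers. The uniqueness is automatic on the segment where Lemma~\ref{decrisingfunction} applies (between the two chambers, with the constant functional $\alpha=(1,\dots,1)$), but one has to argue it globally; the cleanest route is probably to take $\gamma$ to be (a reparametrization of) the straight segment between $(1,\dots,1)$ and $(-1,\dots,-1)$, for which $\rho_N(t)=\langle g^{N}\text{ or rather the appropriate vector}, \cdot\rangle$ is affine in $t$ after the reparametrization forced by Lemma~\ref{decrisingfunction}, hence strictly monotone and with a single zero, and then perturb it slightly if it fails to meet the interiors of the two chambers. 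Once the path is in place, the transfer argument via Lemmas~\ref{torsionpath} and \ref{decrisingfunction} and Propositions~\ref{paths} and \ref{mutation} is routine.
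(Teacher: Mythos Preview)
Your overall strategy is exactly the paper's: build a green path through both chambers, identify $\T_{t_0}=\Fac M$ and $\T_{t_1}=\Fac M'$ via Lemma~\ref{torsionpath}, observe that the shared wall carries a unique stable module by Theorem~\ref{stablemodcat}, and conclude with Proposition~\ref{mutation}. The logical skeleton is correct.

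The difference is in how the path is actually built, which is precisely the obstacle you flag. Your suggestion of taking the straight segment from $(1,\dots,1)$ to $(-1,\dots,-1)$ and then perturbing does not work as stated: the straight segment is automatically a green path (each $\rho_N$ is affine, hence has a single zero), but it has no reason to meet either chamber; and any perturbation large enough to hit $\Ch_{(M,P)}$ and $\Ch_{(M',P')}$ can easily destroy the uniqueness of zeros for some $\rho_N$. The paper avoids this by constructing $\gamma$ as an explicit three-piece linear path: from $(1,\dots,1)$ to a point $\alpha(M',P')\in\Ch_{(M',P')}$, then to a point $\alpha(M,P)+\varepsilon(-1,\dots,-1)\in\Ch_{(M,P)}$, then to $(-1,\dots,-1)$. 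On each piece $\rho_N$ is affine, and Lemma~\ref{decrisingfunction} together with the $\varepsilon$-shift gives the strict inequality $\rho_N(2/3)<\rho_N(1/3)$. Combined with $\rho_N(0)>0>\rho_N(1)$ and piecewise linearity, this forces a unique zero on $[0,1]$: whichever of the three subintervals first sees $\rho_N$ become nonpositive contains the only zero, since the endpoint values never go back up.

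One minor point: the statement as printed has the inclusions reversed (a typo); the hypothesis is $\Fac M\subsetneq\Fac M'$, so along the path one meets $\Ch_{(M',P')}$ before $\Ch_{(M,P)}$, i.e.\ $t_1<t_0$ with $\T_{t_1}=\Fac M'\supsetneq\Fac M=\T_{t_0}$. Your ordering should be adjusted accordingly, but this does not affect the argument.
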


\begin{proof}

Fix $\alpha=(1,1,\dots, 1)$ as we did in the previous lemma.

Note that $\Ch_{(M,P)}$ is an open subset in $\mathbb{R}^n$ with the topology induced by the euclidean metric. Therefore there exist an $\varepsilon>0$ such that $(\varepsilon(-1,-1,\dots, -1)+\alpha(M,P))\in\Ch_{(M,P)}$. Then we define the function $\gamma$ as follows.
$$ \gamma(t)= \begin{cases} 
      (3t+1)(1,1,\dots, 1)+(3t)(\alpha(M',P')) & t\in\left[0,\frac{1}{3}\right]\\
      (3t)(\alpha(M',P'))+(3t-1)(\alpha(M,P)+\varepsilon(-1,-1,\dots, -1)) & t\in\left[\frac{1}{3},\frac{2}{3}\right] \\
      (3t-1)(\alpha(M,P)+\varepsilon(-1,-1,\dots, -1))+(3t-2)(-1,-1,\dots, -1) & t\in\left[\frac{2}{3},1\right]
   \end{cases}$$
We claim that $\gamma$ is a green path. 

First, is obvious that $\gamma(0)=(1,1,\dots, 1)$ and $\gamma(1)=(-1,-1,\dots, -1)$. 

Let $N$ be an $A$-module and consider $\rho_N(t)$ defined by $\rho_{N}(t)=\theta_{\gamma(t)}([N])$. Then we have that $\rho_N(0)>0$, $\rho_N(1)<0$ and Lemma \ref{decrisingfunction} implies that 
$$\rho_N\left(\frac{2}{3}\right)\leq\rho_N\left(\frac{1}{3}\right)+\varepsilon\rho_N(1)<\rho_N\left(\frac{1}{3}\right).$$
There is a unique $t_N\in[0,1]$ such that $\rho_N(t_N)=0$ because $\gamma$ is a piecewise linear function. Hence $\gamma$ is a green path. 

We know  from Proposition \ref{paths} that $\gamma$ induces a stability function $\phi_\gamma$ . Moreover, Lemma \ref{torsionpath} implies that $\T_{\frac{1}{3}}=\Fac M'$ and $\T_{\frac{2}{3}}=\Fac M$. Also, given that $(M,P)$ is a mutation of $(M',P')$, Proposition \ref{chambers} and Corollary \ref{walls} imply that there is a unique $\phi_\gamma$-stable module $N$ such that $\frac{1}{3}\leq \phi_\gamma(N)\leq \frac{2}{3}$. Therefore, if $\T$ is a torsion class such that $\Fac M\subset\T\subset\Fac M'$ then $\Fac M=\T$ or $\Fac M'=\T$ by Proposition \ref{mutation}. This finishes the proof.
\end{proof}

As a first consequence of the previous theorem we give a characterization of maximal green sequences in module categories.

\begin{cor}\label{SVMmod}
Let $A$ be an algebra and $\{(M_i,P_i)\}_{i=0}^n$ a set of $\tau$-tilting pairs such that $(M_0,P_0)=(0,A)$, $(M_n,P_n)=(A,0)$, $(M_i,P_i)=\mu(M_{i-1},P_{i-1})$ and $\Fac M_{i-1}\subsetneq \Fac M_i$ for every $i$. Then $$\{0\}=\Fac M_0\subsetneq \Fac M_1\subsetneq \dots\subsetneq \Fac M_n=\mod A$$ is a maximal green sequence. Moreover every maximal green sequence is of this form. 
\end{cor}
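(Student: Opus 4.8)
The plan is to prove the two directions of Corollary~\ref{SVMmod} separately, using the machinery already developed. For the first direction, suppose we are given such a sequence $\{(M_i,P_i)\}_{i=0}^n$ of $\tau$-tilting pairs with $(M_0,P_0)=(0,A)$, $(M_n,P_n)=(A,0)$, each $(M_i,P_i)=\mu(M_{i-1},P_{i-1})$, and $\Fac M_{i-1}\subsetneq \Fac M_i$. First I would observe that $\Fac M_0 = \Fac 0 = \{0\}$ and $\Fac M_n = \Fac A = \mod A$, so the chain
$$\{0\}=\Fac M_0\subsetneq \Fac M_1\subsetneq \dots\subsetneq \Fac M_n=\mod A$$
is a finite strictly increasing chain of torsion classes (each $\Fac M_i$ is a torsion class by \cite[Theorem 5.10]{ASfuncfiniteFac}, as recalled in the proof of Lemma~\ref{lemtrace}). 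To see it is a maximal green sequence, I need to check that no torsion class can be inserted strictly between consecutive terms. But this is exactly Theorem~\ref{norefinements}: since $(M_i,P_i)$ is a mutation of $(M_{i-1},P_{i-1})$ with $\Fac M_{i-1}\subsetneq \Fac M_i$, any torsion class $\T$ with $\Fac M_{i-1}\subseteq \T\subseteq \Fac M_i$ equals one of the two endpoints. Hence the chain is not refinable, so it is a maximal green sequence.

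For the converse, I would start with an arbitrary maximal green sequence $\{0\}=\X_0\subsetneq \X_1\subsetneq \dots\subsetneq \X_m=\mod A$ and show each $\X_i$ is of the form $\Fac M_i$ for a suitable $\tau$-tilting pair $(M_i,P_i)$, with consecutive pairs related by mutation. The key input is that the functor $\Ch$ of Corollary (the one after Remark~\ref{sametorsion}), equivalently the assignment $(M,P)\mapsto \Fac M$, together with \cite[Theorem 2.7]{AIR}, gives a bijection between $\tau$-tilting pairs and \emph{functorially finite} torsion classes. So I must first argue that every torsion class $\X_i$ appearing in a maximal green sequence is functorially finite. Here I would invoke \cite[Theorem 3.1]{DIJ} (or its local statement): a functorially finite torsion class $\Fac M$ and a torsion class $\T$ with $\Fac M'\subsetneq \T\subsetneq \Fac M$ for a mutation must equal an endpoint; more precisely, the covering relations in the poset of torsion classes starting from $\{0\}$ — which is functorially finite — are given by mutations of $\tau$-tilting pairs, so by induction up the chain $\X_0\subsetneq \X_1\subsetneq\cdots$ each $\X_i$ is functorially finite and $\X_i = \Fac M_i$ with $(M_i,P_i) = \mu(M_{i-1},P_{i-1})$.

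The main obstacle is precisely establishing that every term of a maximal green sequence is functorially finite (so that it is realized by a $\tau$-tilting pair) and that consecutive terms differ by a single mutation. The subtlety is that a priori a maximal green sequence is only required to be non-refinable, not to consist of functorially finite classes; one needs the fact that $\{0\}$ is functorially finite and that covers of a functorially finite torsion class in the torsion-class lattice are again functorially finite and obtained by mutation — this is the content of \cite[Theorem 3.1]{DIJ} combined with \cite[Theorem 2.7]{AIR}. Granting this, the induction proceeds cleanly: if $\X_{i-1}=\Fac M_{i-1}$ is functorially finite and $\X_{i-1}\subsetneq \X_i$ is a covering relation in the maximal green sequence, then $\X_i$ is the mutation $\Fac M_i$ with $\Fac M_{i-1}\subsetneq \Fac M_i$, and since $\X_m = \mod A = \Fac A$ corresponds to $(A,0)$ we get $(M_m,P_m)=(A,0)$, closing the argument.

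Finally, I would remark that the existence of \emph{some} maximal green sequence of this combinatorial form, for a given algebra, is not being asserted — the corollary characterizes the form of maximal green sequences when they exist. Combining both directions, the maximal green sequences in $\mod A$ are exactly the chains $\{0\}=\Fac M_0\subsetneq \cdots \subsetneq \Fac M_n=\mod A$ arising from sequences of mutations of $\tau$-tilting pairs starting at $(0,A)$ and ending at $(A,0)$, which is the claim.
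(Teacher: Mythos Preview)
Your proof is correct and follows essentially the same approach as the paper's own proof: the first direction is deduced directly from Theorem~\ref{norefinements}, and the converse proceeds by induction starting from the functorially finite torsion class $\{0\}=\Fac 0$, using \cite[Theorem 2.7]{AIR} together with \cite[Theorem 3.1]{DIJ} to show that each cover in the maximal green sequence is realized by a mutation of $\tau$-tilting pairs. You have also correctly isolated the only delicate point, namely that functorial finiteness propagates along covers via \cite[Theorem 3.1]{DIJ}.
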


\begin{proof}
The first part of the statement follows directly from Theorem \ref{norefinements}. 

For the second, consider a maximal green sequence $$\{0\}=\T_{0}\subsetneq \T_{1} \subsetneq \cdots \subsetneq \T_{n}=\mod A$$ in $\mod A$. Of course, $\{0\}$ is functorially finite. Thus \cite[Theorem 2.7]{AIR} implies the existence of a $\tau$-tilting pair $(M_0,P_0)$ such that $\Fac M_0=\{0\}$, thus it must be the pair $(0,A)$. We have that there is no torsion class strictly between $\T_0=\Fac M_0$ and $T_1$. Therefore \cite[Theorem 3.1]{DIJ} implies the existence of a $\tau$-tilting pair $(M_1,P_1)$ which is a mutation of $(M_0, P_0)$ such that $\T_1=\Fac M_1$. Inductively, suppose that $\T_{i-1}=\Fac M_{i-1}$. Then there is no torsion class strictly  between $\T_{i-1}=\Fac M_{i-1}$ and $T_i$. Therefore \cite[Theorem 3.1]{DIJ} implies the existence of a $\tau$-tilting pair $(M_i,P_i)$ which is a mutation of $(M_{i-1}, P_{i-1})$ such that $\T_i=\Fac M_i$. Finally this process will eventually stop given that maximal green sequences consist only of finitely many torsion classes. Moreover $\mod A=\T_n=\Fac A$, so clearly $(M_n,P_n)=(A,0)$. This finishes the proof. 
\end{proof}

Now we construct a natural graph associated to the wall and chamber structure of an algebra.

\begin{defi}

Let $A$ be an algebra. We define the quiver $\mathfrak{G}_A$ as follows. 

	\begin{itemize}
		\item The vertices of $\mathfrak{G}_A$ correspond to the chambers in the wall and chamber structure of $A$.
		\item There is an arrow from the vertex associated to $\Ch_1$ to the vertex associated to $\Ch_2$ if $\T_{\Ch_2}\subsetneq\T_{\Ch_1}$ and there is no torsion class $\T$ such that $\T_{\Ch_2}\subsetneq\T\subsetneq\T_{\Ch_1}$.
	\end{itemize}
\end{defi}

With this definition in hand, the following result is immediate.

\begin{prop}
Let $A$ be an algebra. Then the exchange graph of $\tau$-tilting pairs of $A$ is a full subquiver of $\mathfrak{G}_A$. Moreover both quivers are isomorphic if $A$ is $\tau$-finite.
\end{prop}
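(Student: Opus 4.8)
The plan is to establish the two claims separately: first that the exchange graph of $\tau$-tilting pairs embeds as a full subquiver of $\mathfrak{G}_A$, and then that this embedding is surjective (hence an isomorphism) when $A$ is $\tau$-tilting finite. For the first claim, I would use the injective map $\Ch$ from the corollary following Remark~\ref{sametorsion}, which sends a $\tau$-tilting pair $(M,P)$ to the chamber $\Ch_{(M,P)}$; this gives an injection on vertices. It remains to check that this injection preserves and reflects arrows. An arrow in the exchange graph of $\tau$-tilting pairs corresponds to a mutation $(M,P)\mapsto(M',P')$ with $\Fac M'\subsetneq\Fac M$; by Theorem~\ref{norefinements} there is no torsion class strictly between $\Fac M'$ and $\Fac M$, and since $\T_{\Ch_{(M,P)}}=\Fac M$ and $\T_{\Ch_{(M',P')}}=\Fac M'$ by Proposition~\ref{eqtorsionch}, this is precisely an arrow in $\mathfrak{G}_A$. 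Conversely, if $\Ch_{(M,P)}$ and $\Ch_{(M',P')}$ are joined by an arrow in $\mathfrak{G}_A$, then $\Fac M'\subsetneq\Fac M$ with nothing strictly between, so by \cite[Theorem 3.1]{DIJ} the pairs $(M,P)$ and $(M',P')$ are related by a mutation, giving an arrow in the exchange graph. This shows the subquiver is full.

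For the second claim, suppose $A$ is $\tau$-tilting finite. By the corollary following Remark~\ref{sametorsion}, the map $\Ch$ is then also surjective, so every chamber of the wall and chamber structure of $A$ is of the form $\Ch_{(M,P)}$ for some $\tau$-tilting pair. Thus the vertices of $\mathfrak{G}_A$ are exactly the images of the vertices of the exchange graph, and combined with fullness from the first part, the two quivers are isomorphic.

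The main obstacle I anticipate is verifying that arrows in $\mathfrak{G}_A$ between $\tau$-tilting chambers genuinely come from mutations — that is, the ``reflects arrows'' direction. One must ensure that the condition defining arrows in $\mathfrak{G}_A$ (a covering relation $\T_{\Ch_2}\subsetneq\T_{\Ch_1}$ in the poset of torsion classes) matches the combinatorial notion of mutation. This is handled by invoking \cite[Theorem 3.1]{DIJ}, which states exactly that covering relations among functorially finite torsion classes $\Fac M$ correspond to mutations of $\tau$-tilting pairs; since $\Fac M$ is functorially finite for every $\tau$-tilting pair $(M,P)$, this applies directly. The rest of the argument is bookkeeping with the already-established dictionary $\T_{\Ch_{(M,P)}}=\Fac M$ and the injectivity/surjectivity of $\Ch$.
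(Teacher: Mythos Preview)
Your proposal is correct and follows essentially the same approach as the paper, which simply says the result ``follows directly from Proposition~\ref{chambers} and Theorem~\ref{norefinements}.'' You are more explicit than the paper about the fullness direction (reflecting arrows via \cite[Theorem~3.1]{DIJ}), which the paper glosses over, but this is the natural way to fill in the details.
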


\begin{proof}
It follows directly from Proposition \ref{chambers} and Theorem \ref{norefinements}.
\end{proof}

\begin{conj}
The quiver $\mathfrak{G}_A$ is isomorphic to the exchange graph of $\tau$-tilting pairs for every algebra $A$.
\end{conj}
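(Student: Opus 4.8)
The plan is to reduce the conjecture to the surjectivity of the chamber map $\Ch$, equivalently to the conjecture stated just above it that every chamber of the wall and chamber structure of $A$ is induced by a $\tau$-tilting pair. Indeed, the proposition preceding this conjecture identifies the exchange graph of $\tau$-tilting pairs with a \emph{full} subquiver of $\mathfrak{G}_A$, whose vertices are precisely the chambers $\Ch_{(M,P)}$ and whose arrows match the mutations by Theorem \ref{norefinements}. Hence, as soon as one knows that \emph{every} chamber is of the form $\Ch_{(M,P)}$, the vertex set of that subquiver is all of $\mathfrak{G}_A$, and fullness forces the subquiver to be all of $\mathfrak{G}_A$; since $\Ch$ is injective, this yields the desired isomorphism of quivers. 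So the whole problem is to prove that $\Ch\colon (M,P)\mapsto\Ch_{(M,P)}$ is onto.

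To attack surjectivity, I would fix a chamber $\Ch$ and work with the torsion class $\T_{\Ch}=\bigcap_{\theta\in\Ch}\T_\theta$ attached to it. The crucial step is to show that $\T_{\Ch}$ is a \emph{functorially finite} torsion class. Granting this, \cite[Theorem 2.7]{AIR} supplies a $\tau$-tilting pair $(M,P)$ with $\Fac M=\T_{\Ch}$, and Proposition \ref{eqtorsionch} gives $\T_{\Ch_{(M,P)}}=\Fac M=\T_{\Ch}$. It then remains to identify $\Ch$ with $\Ch_{(M,P)}$; since both are connected components of $\mathfrak{R}$, it is enough to show that they overlap. For that I would choose a generic $\theta_0\in\Ch$, argue (or quote) that $\theta\mapsto\T_\theta$ is constant along $\Ch$, so that $\T_{\theta_0}=\T_{\Ch}=\Fac M$, and then prove the converse of Remark \ref{sametorsion}: that $\theta_0\notin\overline{\bigcup_M\D(M)}$ together with $\T_{\theta_0}=\Fac M$ forces $\theta_0\in\Co_{(M,P)}=\Ch_{(M,P)}$. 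This last point should be checkable by testing the signs $\theta_0(N_i)$ on the bordering modules $N_1,\dots,N_n$ of $\Ch_{(M,P)}$ produced in Corollary \ref{walls&chambers}, using the $g$-vector basis of \cite[Theorem 5.1]{AIR}.

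The main obstacle — and the reason this remains a conjecture — is the functorial finiteness of $\T_{\Ch}$ when $A$ is $\tau$-tilting infinite. Geometrically it amounts to saying that the complement in $(\mathbb{R}^n)^*$ of the $\tau$-tilting fan $\bigcup_{(M,P)}\Co_{(M,P)}$ contains no chamber, i.e.\ is contained in $\overline{\bigcup_M\D(M)}$; equivalently, the accumulation loci of $g$-vectors of $\tau$-rigid modules — such as the ray $\{\lambda(1,-1):\lambda>0\}$ for the Kronecker quiver in the remark following Corollary \ref{walls&chambers} — are themselves limits of walls and meet no chamber. The delicate point is that an uncountable family of walls $\D(M)$ (arising from non-$\tau$-rigid modules, e.g.\ in tubes) can in principle cover an open set, so the complement of the fan cannot be dismissed by a Baire-category argument alone. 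Making this precise for an arbitrary finite-dimensional algebra appears to require input beyond the methods of the present paper, such as the lattice-theoretic control of non-functorially-finite torsion classes of Demonet--Iyama--Reading--Reiten--Thomas, or a direct asymptotic analysis of the $g$-vector cones. A sensible intermediate target is the tame or hereditary $\tau$-tilting infinite case, where the wall and chamber structure is explicitly known and the part not covered by the $\tau$-tilting fan reduces to a cone of limit walls with empty interior; settling those cases would already establish the conjecture there and should suggest the shape of a general argument.
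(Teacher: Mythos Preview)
This statement is a \emph{conjecture} in the paper, not a theorem: the paper offers no proof, only the observation (the proposition immediately preceding it) that the exchange graph embeds as a full subquiver of $\mathfrak{G}_A$, with equality in the $\tau$-tilting finite case. There is therefore no ``paper's own proof'' to compare against.

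Your proposal is not a proof either, and you say as much. What you have written is a correct and useful \emph{reduction}: you observe that, by fullness of the embedding, the conjecture is equivalent to the surjectivity of $\Ch$, i.e.\ to the earlier conjecture that every chamber comes from a $\tau$-tilting pair; and you further reduce that to showing $\T_{\Ch}$ is functorially finite for every chamber $\Ch$. Both reductions are sound, and your identification of the main obstacle---controlling the complement of the $\tau$-tilting fan for $\tau$-tilting infinite algebras---is exactly the point at which the paper's methods run out. Your remark that one would still need to check $\Ch=\Ch_{(M,P)}$ once $\T_{\Ch}=\Fac M$ is known (a converse to Remark~\ref{sametorsion}) is also well taken; this step is not addressed in the paper and is a genuine additional ingredient your outline would require.

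In short: there is no gap to flag because there is no claimed proof on either side. Your analysis accurately locates where the difficulty lies and matches the paper's implicit reasoning for leaving the statement as a conjecture.
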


\subsection{Maximal green sequences}

We first show in this subsection  that every maximal green sequence in the module category of an algebra is induced by a green path.

\begin{theorem}\label{carac}
Let $A$ be an algebra and $\gamma$ a green path in the wall and chamber structure of $A$. Then $\gamma$ induces a maximal green sequence if and only if $\gamma$ crosses only finitely many walls $\D(N_1), \D(N_2), \dots, \D(N_n)$ such that $\phi_\gamma(N_i)\neq \phi_\gamma(N_j)$ if $i\neq j$. Moreover every maximal green sequence is induced by a green path of this form.
\end{theorem}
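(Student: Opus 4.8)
The plan is to combine the continuous stability theory of Section~3 with the geometric description of chambers from Section~4, applied to the stability function $\phi_\gamma$ induced by a green path $\gamma$ (Proposition~\ref{paths}). For the forward direction, suppose $\gamma$ induces a maximal green sequence $\{0\}=\T_0\subsetneq\cdots\subsetneq\T_m=\mod A$. By Theorem~\ref{maximalgreensequences}, $\phi_\gamma$ is then a discrete stability function inducing only finitely many torsion classes; I want to upgrade "finitely many torsion classes" to "$\gamma$ crosses only finitely many walls with distinct phases". First I would note that by Corollary~\ref{SVMmod} each consecutive jump $\T_{i-1}\subsetneq\T_i$ is $\Fac M_{i-1}\subsetneq\Fac M_i$ for a mutation $(M_i,P_i)=\mu(M_{i-1},P_{i-1})$. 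The construction used in Theorem~\ref{norefinements}, together with Lemma~\ref{torsionpath}, shows that $\gamma$ must pass through the chamber $\Ch_{(M_i,P_i)}$ on the interval where $\phi_\gamma=$ const on $\T_i$, and hence must cross the common wall $\D(N_i)$ separating $\Ch_{(M_{i-1},P_{i-1})}$ from $\Ch_{(M_i,P_i)}$; by Proposition~\ref{walls} and Proposition~\ref{eqtorsionch} this $N_i$ is the unique $\phi_\gamma$-stable module witnessing the mutation, and its phase $\phi_\gamma(N_i)=t_{N_i}$ is exactly the value $p_i$ where the torsion class changes. Since the $p_i$ are pairwise distinct (the chain is strict) and there are finitely many of them, this gives the finitely many walls with distinct phases; any further wall crossing would, via Lemma~\ref{darkside}, either not change the torsion class or would refine the chain, contradicting maximality or discreteness.

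For the converse, assume $\gamma$ crosses only the walls $\D(N_1),\dots,\D(N_n)$ with $\phi_\gamma(N_i)\neq\phi_\gamma(N_j)$ for $i\neq j$. I would first argue that the only $\phi_\gamma$-stable modules are (up to isomorphism) among the $N_i$: indeed, if $X$ is $\phi_\gamma$-stable with phase $t_X$, then $X$ is $\theta_{\gamma(t_X)}$-semi-stable by Proposition~\ref{paths}, so $\D(X)$ is a wall containing $\gamma(t_X)$, forcing $\gamma(t_X)\in\D(N_i)$ for some $i$; a short argument using Remark~\ref{simplestable} (stable modules are the simples of the abelian category $\A_{t_X}$, and on a single wall away from intersections there is essentially one such simple, cf.\ Theorem~\ref{stablemodcat}) identifies $X$ with $N_i$. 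Hence there are at most $n$ stable modules with pairwise distinct phases, so $\phi_\gamma$ is discrete and induces only finitely many torsion classes. Now Proposition~\ref{green-to-red} gives $\T_0=\mod A$ and $\T_1=\{0\}$ (in the $\P=[0,1]$ normalization, i.e.\ the endpoints are the trivial classes), and Corollary~\ref{maximal-paths} (equivalently Theorem~\ref{maximalgreensequences}) then yields that $\gamma$ induces a maximal green sequence.

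For the final "moreover" clause — that every maximal green sequence arises from such a green path — I would invoke Corollary~\ref{SVMmod} to write the given maximal green sequence as $\{0\}=\Fac M_0\subsetneq\cdots\subsetneq\Fac M_n=\mod A$ with each step a mutation, and then iterate the path construction from the proof of Theorem~\ref{norefinements}: pick a point in each chamber $\Ch_{(M_i,P_i)}$ and concatenate the straight-line segments between consecutive chosen points, prepending the segment from $(1,\dots,1)$ and appending the segment to $(-1,\dots,-1)$, with the small $\varepsilon$-perturbations ensuring the piecewise-linear path is green (monotone decreasing on each $\rho_N$, so each $t_N$ is unique). Lemma~\ref{torsionpath} guarantees that on the $i$-th plateau the induced torsion class is exactly $\Fac M_i$, so $\gamma$ recovers the prescribed chain.

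\textbf{Main obstacle.} The delicate point is the claim that every $\phi_\gamma$-stable module lies on one of the finitely many crossed walls and that distinct stable modules crossed at the same "time" cannot occur — i.e.\ controlling the stable modules purely from the combinatorics of which walls are crossed. One must rule out stable modules whose wall is crossed only at an intersection point of several walls (where $\gamma$ is allowed to pass), and check that the normalization of Lemma~\ref{darkside} together with the see-saw property genuinely forces the phases $t_{N_i}$ to be the jump values of the torsion filtration; handling the case where $\gamma$ momentarily travels along a wall (rather than transversally crossing it) also needs a remark, since then the "crossing" is a whole interval of times but still a single value $t_{N_i}$.
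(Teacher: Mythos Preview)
Your converse direction and the ``moreover'' construction are essentially the paper's approach: the paper also reduces the biconditional to Theorem~\ref{maximalgreensequences} via the identification of $\phi_\gamma$-stable modules with wall crossings, and for the moreover part it builds exactly the piecewise-linear path through the chambers $\Ch_{(M_i,P_i)}$ that you describe, using Lemma~\ref{decrisingfunction} for monotonicity of $\rho_N$ at the nodes and Lemma~\ref{torsionpath} (via Remark~\ref{sametorsion}) to identify the torsion classes at the plateaus.

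Your forward direction, however, takes an unnecessary detour with a real gap. You invoke Lemma~\ref{torsionpath} and the construction in Theorem~\ref{norefinements} to conclude that the given green path $\gamma$ must pass through each chamber $\Ch_{(M_i,P_i)}$. But Lemma~\ref{torsionpath} only says that \emph{if} $\gamma(t_0)\in\Ch_{(M,P)}$ \emph{then} $\T_{t_0}=\Fac M$; it gives no converse, and nothing in the paper forces an arbitrary green path realising the chain $\Fac M_0\subsetneq\cdots\subsetneq\Fac M_n$ to actually enter those particular chambers. The path could, a priori, spend time on walls or in other regions while still producing the same torsion filtration. The paper avoids this entirely: it argues directly that the wall condition (finitely many crossed walls at pairwise distinct phases) is the same as $\phi_\gamma$ being discrete with finitely many induced torsion classes, and then Theorem~\ref{maximalgreensequences} finishes both implications at once. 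Concretely, from ``$\phi_\gamma$ discrete with finitely many torsion classes'' you get finitely many $\phi_\gamma$-stable modules with distinct phases (one per jump, by Proposition~\ref{mutation}), and each such stable $N_i$ gives $\gamma(t_{N_i})\in\D(N_i)$ by Proposition~\ref{paths}; no chamber argument is needed. Your own ``main obstacle'' paragraph already contains the right ingredients for this direct route, so the fix is simply to drop the chamber passage and argue symmetrically to your converse.
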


\begin{proof}
Let $\gamma$ be a green path and, once again, fix $\alpha=(1,1,\dots, 1)$. Then the stability function $\phi_\gamma$ is discrete if and only if $\gamma$ crosses only finitely many walls $\D(N_1)$, $\D(N_2)$, $\dots$, $\D(N_n)$ such that $\phi_\gamma(N_i)\neq \phi_\gamma(N_j)$ whenever $i\neq j$. Hence Theorem \ref{maximalgreensequences} implies that $\gamma$ induces a maximal green sequence if and only if $\gamma$ is of this form. This completes the proof of the first part of the statement. 

Let 
$$\{0\}=\T_{0}\subsetneq \T_{1} \subsetneq \cdots \subsetneq \T_{n}=\mod A$$
be a maximal green sequence. The aim is to show that this maximal green sequence is induced by a green path $\gamma$.

As a first step, we note that by Corollary \ref{SVMmod} this sequence is of the form
$$\{0\}=\Fac M_0\subsetneq \Fac M_1\subsetneq \dots\subsetneq \Fac M_n=\mod A$$
for a given set of $\tau$-tilting pairs $\{(M_i,P_i)\}_{i=0}^n$ such that $(M_0,P_0)=(0,A)$, $(M_n,P_n)=(A,0)$ and $(M_i,P_i)=\mu(M_{i-1},P_{i-1})$. 

Let $(M_i,P_i)$ be one of the $\tau$-tilting pairs in the set associated to our maximal green sequence. Then is easy to see that for every $i$, there exists an $\varepsilon_i>0$ such that $(\varepsilon_i(-1,-1,\dots, -1)+\alpha(M_i,P_i))\in\Ch_{(M_i,P_i)}$ and set $\varepsilon=\min_{i=1}^{n-1}\{\varepsilon_i\}$.

Now we define for each  $1\leq i\leq n$ the function $\gamma_i$ on the interval $\left[\frac{n-i}{n},\frac{n-i+1}{n}\right]$ by
\begin{eqnarray*}
\gamma_i(t) & = & (n(1-t)-i+1)\left(\alpha(M_i,P_i)+\frac{(n-i)\varepsilon}{n}(-1,-1,\dots, -1)\right)+ \\
 & & (n(t-1)+i)\left(\alpha(M_{i-1},P_{i-1})+\frac{(n-i+1)\varepsilon}{n}(-1,-1,\dots, -1)\right),
 \end{eqnarray*}
 and an extra function
$$\gamma_0=(n(1-t))\left(\alpha(M_1,P_1)+\frac{(n-1)\varepsilon\alpha}{n}(M_0-P_0)\right)+(n(t-1)+1)\alpha(M_0,P_0)$$ defined on the interval $\left[\frac{n-1}{n},1\right]$. Then we define the function $\gamma$ on the interval $[0,1]$ as $\gamma(t)=\gamma_i$ whenever $t\in\left[\frac{n-i}{n},\frac{n-i+1}{n}\right]$. We claim that $\gamma$ is a green path. 

First, it follows from the definition of $\gamma$ that $\gamma(0)=(1,1,\dots, 1)$ and $\gamma(1)=(-1,-1,\dots, -1)$. 

Let $N$ be an arbitrary $A$-module and consider the function $\rho_N:[0,1]\to \mathbb{R}$ defined by $\rho_{N}(t)=\theta_{\gamma(t)}([N])$. Then we have that $\rho_N(0)>0$ and $\rho_N(1)<0$. Moreover 
$$\rho_N\left(\frac{i+1}{n}\right) \; \leq \; \rho_N\left(\frac{i}{n}\right)+\frac{\varepsilon}{n}\rho_N(1) \; < \; \rho_N\left(\frac{i}{n}\right)$$
by Lemma \ref{decrisingfunction}. Hence there is a unique $t_N\in[0,1]$ such that $\rho_N(t_N)=0$ because $\gamma$ is a piecewise linear function. Therefore $\gamma$ is a green path.

On the other hand, Remark \ref{sametorsion} implies that the torsion classes induced by $\phi_\gamma$ are $\T_{\frac{i}{n}}=\Fac M_i$ for every $i$. Therefore, the chain of torsion classes induced by the green path $\gamma$ is a refinement of the maximal green sequence
$$\{0\}=\T_{0}\subsetneq \T_{1} \subsetneq \cdots \subsetneq \T_{n}=\mod A.$$ 
But a maximal green sequence does not allow any refinements, thus we have constructed a green path $\gamma$ inducing the maximal green sequence $$\{0\}=\T_{0}\subsetneq \T_{1} \subsetneq \cdots \subsetneq \T_{n}=\mod A.$$

This finishes the proof.
\end{proof}

%%%%%%%%%%%%%%%%%%%%%%%%%%%%%%%%%%%%%%%%%%%%%
\subsection{An application}

We would like to illustrate our results by two examples. The first one shows three green paths in the wall and chamber structure displayed in Figure \ref{w&cA2}.

\begin{ex}
Let $A$ be the path algebra of the quiver $\xymatrix{1\ar[r]&2}$. In Figure \ref{troischemins} we describe three different green paths. One can see that $\gamma_1$ and $\gamma_3$ have the properties described in Theorem \ref{carac}. Therefore these two green paths induce the two maximal green sequences which exist in $\mod A$. On the other hand, $\gamma_2$ does not induce a maximal green sequence because this green path crosses every wall at the same time. 

\definecolor{qqzzqq}{rgb}{0.,0.6,0.}
\definecolor{ffqqqq}{rgb}{1.,0.,0.}
\definecolor{qqccqq}{rgb}{0.,0.8,0.}
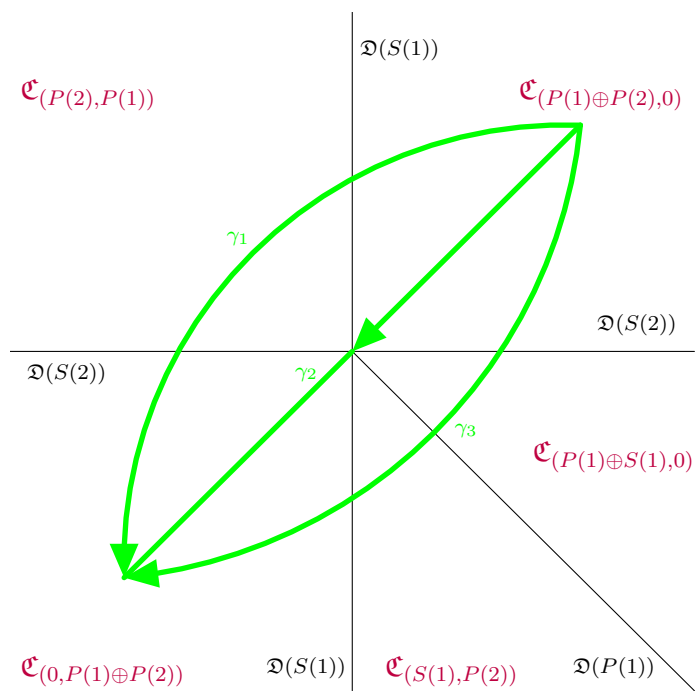
\begin{figure}
\begin{center}
				\begin{tikzpicture}[line cap=round,line join=round,>=triangle 45,x=3.0cm,y=3.0cm]
					\clip(-1.5,-1.5) rectangle (1.5,1.5);
						\draw (0.,0.) -- (0.,1.5);
						\draw (0.,0.) -- (0.,-1.5);
						\draw [domain=-1.5:0.0] plot(\x,{(-0.-0.*\x)/-1.});
						\draw [domain=0.0:1.5] plot(\x,{(-0.-1.*\x)/1.});
						\draw [domain=0.0:1.5] plot(\x,{(-0.-0.*\x)/1.});
						\draw[color=purple] (1.5,1.25) node[anchor=north east] {$\Ch_{(P(1)\oplus P(2),0)}$};
						\draw[color=purple] (-1.5,1.25) node[anchor=north west] {$\Ch_{(P(2),P(1))}$};
						\draw[color=purple] (1.55,-0.35) node[anchor=north east] {$\Ch_{(P(1)\oplus S(1), 0)}$};
						\draw[color=purple] (0.1,-1.3) node[anchor=north west] {$\Ch_{(S(1),P(2))}$};
						\draw[color=purple] (-1.5,-1.3) node[anchor=north west] {$\Ch_{(0,P(1)\oplus P(2))}$};
                        \draw [shift={(-1.214762126142883,1.214762126142883)},line width=2.pt,color=green,<-]  plot[domain=4.809055239892678:6.186519047671598,variable=\t]({1.*2.2251503423864096*cos(\t r)+0.*2.2251503423864096*sin(\t r)},{0.*2.2251503423864096*cos(\t r)+1.*2.2251503423864096*sin(\t r)});
						\draw [shift={(0.9296624106997075,-0.9296624106997076)},line width=2.pt,color=green,->]  plot[domain=1.5343617347889673:3.178027245595722,variable=\t]({1.*1.9309439131512813*cos(\t r)+0.*1.9309439131512813*sin(\t r)},{0.*1.9309439131512813*cos(\t r)+1.*1.9309439131512813*sin(\t r)});
						\draw [line width=2.pt,color=green] (-1.,-1.)-- (0,0);
						\draw [line width=2.pt,color=green,->] (1.,1.)-- (0,0);

					\begin{scriptsize}
                    	\draw[color=green] (-0.5,0.5) node {$\gamma_1$};
						\draw[color=green] (-0.2,-0.1) node {$\gamma_2$};
						\draw[color=green] (0.5,-0.35) node {$\gamma_3$};

						\draw[color=black] (0,1.25) node[anchor= south west] {$\mathfrak{D}(S(1))$};
						\draw[color=black] (-0.2,-1.4) node {$\mathfrak{D}(S(1))$};
						\draw[color=black] (-1.25,-0.1) node {$\mathfrak{D}(S(2))$};
						\draw[color=black] (1.15,-1.4) node {$\mathfrak{D}(P(1))$};
						\draw[color=black] (1.25,0.12) node {$\mathfrak{D}(S(2))$};
					\end{scriptsize}
				\end{tikzpicture}

\end{center}
\caption{Three green paths inducing different chains of torsion classes}
\label{troischemins}
\end{figure}
\end{ex}

In the second example, we apply  the results developed in this paper to show that the algebras related to the so-called \textit{Markov quiver} do not admit maximal green sequences. These algebras are related to the cluster algebra of the one-punctured torus, and  have been object of intense studies in the context of cluster algebras, see for instance \cite[Example 35]{L-F-surfaces&potentials}, \cite{N-C-c&gvect} or \cite[Theorem 5.17]{DIJ}.

\begin{theorem}
Let $A=kQ/I$ be an algebra where  $I$ is an admissible ideal of $kQ$ and the quiver $Q$ has exactly three vertices and admits the quiver
$$\xymatrix{
 & 2\ar@<0.5 ex>[dl]\ar@<-0.5 ex>[dl] & \\
 1\ar@<0.5 ex>[rr]\ar@<-0.5 ex>[rr] & & 3\ar@<0.5 ex>[ul]\ar@<-0.5 ex>[ul] }$$
as a subquiver. Then there is no maximal green sequence in $\mod A$.
\end{theorem}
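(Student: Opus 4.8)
The plan is to use the geometric characterization of maximal green sequences developed in this paper (Theorem \ref{carac} together with Corollary \ref{SVMmod}): it suffices to show that there is \emph{no} green path inducing a maximal green sequence, equivalently that no finite chain of $\tau$-tilting mutations can go from $(0,A)$ to $(A,0)$. By Corollary \ref{SVMmod} and Theorem \ref{norefinements}, a maximal green sequence corresponds to a finite sequence of $\tau$-tilting pairs $(M_0,P_0)=(0,A),\ (M_1,P_1),\ \dots,\ (M_m,P_m)=(A,0)$ with each consecutive pair related by a single mutation and $\Fac M_{i-1}\subsetneq \Fac M_i$. Passing to $g$-vectors, such a sequence corresponds to a finite sequence of chambers $\Ch_{(M_0,P_0)},\dots,\Ch_{(M_m,P_m)}$ in the wall-and-chamber structure, starting at the cone $\mathbb{R}_{>0}\langle -g^{P(1)},\dots,-g^{P(n)}\rangle$ (the ``all-negative'' chamber) and ending at $\mathbb{R}_{>0}\langle g^{P(1)},\dots,g^{P(n)}\rangle$ (the ``all-positive'' chamber), with consecutive chambers sharing a wall.

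The key step is to exhibit an obstruction: I would show that, for the Markov quiver, the $g$-vector fan does \emph{not} fill up $\mathbb{R}^3$ — there is a central, codimension-zero region (the ``non-tilting'' region, containing for instance a neighbourhood of the ray $\mathbb{R}_{>0}(1,1,1)$ rotated appropriately, or more precisely the complement of the closure of the union of all $\tau$-tilting cones) which separates the all-positive chamber from the all-negative chamber. Concretely, one knows from \cite[Theorem 5.17]{DIJ} (or by a direct computation reducing to the path-algebra-free case via the admissible ideal $I$, using that $\tau$-rigid modules and their $g$-vectors are controlled by the $2$-term silting complexes, which only depend on $Q$ in the relevant range) that $A$ is $\tau$-tilting infinite and that the $g$-vectors of indecomposable $\tau$-rigid modules accumulate: the union of $\tau$-tilting chambers has a ``hole''. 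Any green path $\gamma$ from $(1,\dots,1)$ to $(-1,\dots,-1)$ must cross this hole, hence must cross infinitely many walls in any neighbourhood of the crossing, so $\phi_\gamma$ has infinitely many stable modules with accumulating phases; by Theorem \ref{maximalgreensequences} (via Theorem \ref{carac}) it cannot induce a maximal green sequence. Since by Theorem \ref{carac} \emph{every} maximal green sequence arises from a green path, there is none.

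More carefully, here are the steps in order. First, reduce from $A=kQ/I$ to a statement purely about $Q$: a maximal green sequence in $\mod A$ would give, via Corollary \ref{SVMmod}, a finite maximal chain of $\tau$-tilting pairs; by the description of $\tau$-tilting theory in terms of $2$-term silting and $g$-vectors (Theorems \ref{LI}, \ref{inj-g-vecteurs}), the combinatorics of which cones appear and which walls separate them is governed by $Q$ together with $I$, and one checks that for a quiver containing the Markov quiver as a full subquiver the relevant obstruction persists. Second, invoke (or reprove) the fact that the Markov algebra is $\tau$-tilting infinite with the $g$-vector cones accumulating onto a ``wall of walls'' bounding a chamber-free region — this is the content of the examples in \cite{L-F-surfaces&potentials}, \cite{N-C-c&gvect}, \cite{DIJ}; the geometric upshot is that $\mathfrak{R}$ (the complement of all walls) has a bounded-away-from-the-axes open piece $U$ not contained in any $\tau$-tilting chamber, and moreover any continuous path from the positive orthant's central ray to the negative orthant's central ray must enter $U$. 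Third, given a green path $\gamma$, let $t_0$ be a time with $\gamma(t_0)\in U$; I claim infinitely many walls $\D(N_k)$ meet every neighbourhood of $\gamma(t_0)$, so the phases $\phi_\gamma(N_k)=t_{N_k}$ accumulate at $t_0$, whence $\phi_\gamma$ induces infinitely many distinct torsion classes $\T_p$ and cannot give a maximal green sequence by Theorem \ref{maximalgreensequences}. Fourth, apply Theorem \ref{carac}: every maximal green sequence in $\mod A$ is induced by a green path, contradiction; therefore none exists.

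The main obstacle I expect is making the second step rigorous and self-contained: precisely identifying the chamber-free ``hole'' in the wall-and-chamber structure of the Markov algebra and proving that any green path must pass through it. One must show not merely that $A$ is $\tau$-tilting infinite (which by itself does not preclude a maximal green sequence — there are $\tau$-tilting infinite algebras with maximal green sequences), but the stronger topological fact that the all-positive and all-negative chambers lie in different components of the complement of the ``limiting'' walls, or equivalently that there is no finite chamber-path between them. I would handle this by an explicit computation of a few $g$-vectors of $\tau$-rigid modules for the Markov quiver (using Corollary \ref{tauformula} and the mutation combinatorics), showing the cones spiral and leave an uncovered cone around a fixed direction $v_0$, and then noting that $(1,1,1)$ and $(-1,-1,-1)$ are separated by the plane-with-hole these cones sweep out; any path between them must meet the uncovered cone, along which — by Lemma \ref{decrisingfunction}-type monotonicity and the accumulation of walls — the induced stability function fails discreteness.
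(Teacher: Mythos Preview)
Your overall strategy---use Theorem \ref{carac} to reduce to showing that no green path can induce a maximal green sequence---matches the paper's, but the obstruction you look for is geometric (a separating ``hole'' in the $g$-vector fan) whereas the paper's is algebraic, and the paper's argument is considerably shorter and more self-contained.

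The paper does not analyse the $g$-vector fan at all. It first passes to the concrete quotient $C=A/I'$ given by the Markov quiver modulo the square of its radical; since $\mod C\hookrightarrow\mod A$, every wall for $C$ is already a wall for $A$, so it suffices to show that every green path crosses infinitely many $C$-walls. For this, the paper simply observes that $\gamma(0)=(1,1,1)$ lies in the chamber $\Ch_{(C,0)}$, whose bounding walls are exactly $\D(S(1)),\D(S(2)),\D(S(3))$ by Corollary \ref{walls&chambers}; hence some simple $S(i)$ has minimal phase $t_{S(i)}\le t_M$ for all $M$. This forces one specific length-two module $Q_j$ (a quasi-simple sitting in a tube of the Auslander--Reiten quiver of $C$, with dimension vector a sum of two distinct unit vectors) to be $\phi_\gamma$-stable, and the associated torsion class $\T_{t_{Q_j}}$ is not functorially finite. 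By Corollary \ref{SVMmod} this torsion class cannot appear in a maximal green sequence, so $\gamma$ does not induce one.

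Your route, by contrast, needs the two facts you yourself flag: that the closure of the $\tau$-tilting chambers misses an open region $U$ which genuinely \emph{separates} the positive and negative orthants (mere $\tau$-tilting infiniteness does not give this), and that walls accumulate at every point of $U$ so that any path through $U$ crosses infinitely many. Both happen to be true for the Markov quiver, but neither is proved in this paper, and establishing the separation statement from scratch is essentially the content of the theorem. The paper bypasses all of this by exhibiting, for \emph{every} green path, one explicit $\phi_\gamma$-stable module with a non-functorially-finite torsion class. Your reduction from general $A$ to the underlying quiver is also looser than the paper's: rather than arguing that the cone combinatorics is ``governed by $Q$'', the paper uses the actual algebra surjection $A\twoheadrightarrow C$, so that $C$-modules are literally $A$-modules and walls persist on the nose.
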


\begin{proof}
We start the proof showing that if $C$ is the path algebra of the quiver 
$$\xymatrix{
 & 2\ar@<0.5 ex>[dl]\ar@<-0.5 ex>[dl] & \\
 1\ar@<0.5 ex>[rr]\ar@<-0.5 ex>[rr] & & 3\ar@<0.5 ex>[ul]\ar@<-0.5 ex>[ul] }$$
modulo its radical squared, then there is no maximal green sequence in $\mod C$. 

To do so, we denote by $Q_1$, $Q_2$ and $Q_3$ the $C$-modules having $(1,0,1)$ $(1,1,0)$ and $(0,1,1)$ as a dimension vector, respectively. Note that these modules correspond to the quasi-simple modules living in the tubes of the Auslander-Reiten quiver of $\mod C$. 

Let $\gamma$ be a green path in the wall and chamber structure of $\mod C$. Then, by definition, $\gamma(0)=(1,1,1)$ which is a interior point of the chamber $\Ch_{(C,0)}$ defined by the $\tau$-tilting pair $(C,0)$. 
Moreover, the description of the chamber $\Ch_{(C,0)}$ induced by $(C,0)$ given in Corollary \ref{walls&chambers} implies that there exists a simple module $S$ such that $t_S\leq t_M$ for every $C$-module $M$. If $S=S(1)$ ($S=S(2)$, $S=S(3)$) then $Q_2$ ($Q_3$, $Q_1$, respectively) is $\phi_\gamma$-stable. In that case we have that $\T_{Q_2}$ ($\T_{Q_3}$, $\T_{Q_1}$respectively) is not a functorially finite torsion class. In each case, this implies that $\gamma$ is crossing an infinite number of walls in the wall and chamber structure of $C$ by Corollary \ref{SVMmod}. Hence $\gamma$ does not induce a maximal green sequence. Since Theorem \ref{carac} says that every maximal green sequence is given by a green path and there is no green path inducing a maximal green sequence in $\mod C$, this implies that there is no maximal green sequence in $\mod C$. 

Now, let $A$ be an algebra as stated in the theorem. Then there exists an ideal $I'$ contained in the radical of $A$ such that $C=A/I'$. Since every $C$-module is an $A$-module, we get that every wall in the wall and chamber structure of $C$ is a wall in the wall and chamber structure of $A$. Therefore each green path in the wall and chamber structure of $A$ crosses at least as many walls as it does in the wall and chamber structure of $C$, implying that there is no maximal green sequence in $\mod A$. This finishes the proof.
\end{proof}

\def\cprime{$'$} \def\cprime{$'$}


\begin{thebibliography}{10}

\bibitem{AIR}
T.~Adachi, O.~Iyama, and I.~Reiten.
\newblock {$\tau$}-tilting theory.
\newblock {\em Compos. Math.}, 150(3):415--452, 2014.

\bibitem{AsSS}
I.~Assem, D.~Simson, and A.~Skowro{\'n}ski.
\newblock {\em Elements of the representation theory of associative algebras.
  {V}ol. 1}, volume~65 of {\em London Mathematical Society Student Texts}.
\newblock Cambridge University Press, Cambridge, 2006.
\newblock Techniques of representation theory.

\bibitem{AR1}
M.~Auslander and I.~Reiten.
\newblock Modules determined by their composition factors.
\newblock {\em Illinois J. Math.}, 29(2):280--301, 1985.

\bibitem{ARS}
M.~Auslander, I.~Reiten, and S.~O. Smal{\o}.
\newblock {\em Representation theory of {A}rtin algebras}, volume~36 of {\em
  Cambridge Studies in Advanced Mathematics}.
\newblock Cambridge University Press, Cambridge, 1997.
\newblock Corrected reprint of the 1995 original.

\bibitem{ASfuncfiniteFac}
M.~Auslander and S.~O. Smal{\o}.
\newblock Addendum to: ``{A}lmost split sequences in subcategories''.
\newblock {\em J. Algebra}, 71(2):592--594, 1981.

\bibitem{B}
T.~Bridgeland.
\newblock Stability conditions on triangulated categories.
\newblock {\em Ann. of Math. (2)}, 166(2):317--345, 2007.

\bibitem{B16}
T.~Bridgeland.
\newblock Scattering diagrams, hall algebras and stability conditions.
\newblock {\em arXiv}, 1603.00416, 2016.

\bibitem{DIJ}
L.~Demonet, O.~Iyama, and G.~Jasso.
\newblock $\tau$-tilting finite algebras and $g$-vectors.
\newblock {\em arXiv}, 1503.00285, 2015.

\bibitem{E14}
M.~Engenhorst.
\newblock Tilting and refined {D}onaldson-{T}homas invariants.
\newblock {\em J. Algebra}, 400:299--314, 2014.

\bibitem{GHKK}
M.~Gross, P.~Hacking, S.~Keel, and M.~Kontsevich.
\newblock Canonical bases for cluster algebras.
\newblock {\em arXiv}, 1411.1394, 2014.

\bibitem{Igu}
K.~Igusa.
\newblock Linearity of stability conditions I.
\newblock {\em in preparation}, 2017.

\bibitem{IOTW}
K.~Igusa, K.~Orr, G.~Todorov, and J.~Weyman.
\newblock Cluster complexes via semi-invariants.
\newblock {\em Compos. Math.}, 145(4):1001--1034, 2009.

\bibitem{J}
G.~Jasso.
\newblock Reduction of {$\tau$}-tilting modules and torsion pairs.
\newblock {\em Int. Math. Res. Not. IMRN}, (16):7190--7237, 2015.

\bibitem{Ki}
A.~D. King.
\newblock Moduli of representations of finite dimensional algebras.
\newblock {\em QJ Math}, 45(4):515--530, 1994.

\bibitem{L-F-surfaces&potentials}
D.~Labardini-Fragoso.
\newblock Quivers with potentials associated to triangulated surfaces.
\newblock {\em Proc. Lond. Math. Soc. (3)}, 98(3):797--839, 2009.

\bibitem{Mmutnoninv}
G.~Muller.
\newblock The existence of a maximal green sequence is not invariant under
  quiver mutation.
\newblock {\em Electron. J. Combin.}, 23(2):Paper 2.47, 23, 2016.

\bibitem{N-C-c&gvect}
A.~Nájera~Chávez.
\newblock On the c-vectors and g-vectors of the {M}arkov cluster algebra.
\newblock {\em Sém. Lothar. Combin.}, 69:Art. B69d, 12, 2012.

\bibitem{Ru}
A.~Rudakov.
\newblock Stability for an abelian category.
\newblock {\em Journal of Algebra}, 197:231--245, 1997.

\bibitem{bookRalf}
R.~Schiffler.
\newblock {\em Quiver representations}.
\newblock CMS Books in Mathematics/Ouvrages de Math\'ematiques de la SMC.
  Springer, Cham, 2014.

\bibitem{Scho}
A.~Schofield.
\newblock Semi-invariants of quivers.
\newblock {\em J. London Math. Soc. (2)}, 43(3):385--395, 1991.

\bibitem{Yur}
T.~Yurikusa.
\newblock Wide subcategories are semistable.
\newblock {\em in preparation}, 2017.

\end{thebibliography}
\end{document}